\DeclareMathOperator{\col}{col}
\DeclareMathOperator{\sign}{sign}
\DeclareMathOperator{\sgn}{sgn}
\DeclareMathOperator{\sig}{sig}
\DeclareMathOperator{\diag}{diag}
\newtheorem{thm}{Theorem}[section]
\newtheorem{corollary}{Corollary}[section]
\newtheorem{lem}{Lemma}[section]
\newtheorem{defn}{Definition}[section]
\newtheorem{assum}{Assumption}[section]
\newtheorem{rem}{Remark}[section]
\def\BibTeX{{\rm B\kern-.05em{\sc i\kern-.025em b}\kern-.08em
    T\kern-.1667em\lower.7ex\hbox{E}\kern-.125emX}}
\begin{document}
\title{Robust Nash Equilibrium Seeking Based on Semi-Markov Switching Topologies}
\author{Jianing Chen, Sitian Qin and Chuangyin Dang$^{*}$, \IEEEmembership{Senior Member, IEEE}
\thanks{This work was supported in part by the National Natural Science Foundation of China (62176073,12271127), and the Natural Science Foundation of Shandong Province (ZR2024MF080). Corresponding author: Chuangyin Dang.}
\thanks{J. Chen and S. Qin are with the Department of Mathematics, Harbin Institute of Technology, Weihai, 264209, China (e-mail: chjn6511@163.com; qinsitian@163.com).}
\thanks{C. Dang is with the Department of Systems Engineering, City University of Hong Kong, Hong Kong (e-mail: mecdang@cityu.edu.hk).}
}
\maketitle

\begin{abstract}
This paper investigates a distributed robust Nash Equilibrium (NE) seeking problem for second-order players subject to external disturbances and uncertain dynamics while communicating via semi-Markov switching topologies. To accommodate the above concerns, the following targets require to be reached simultaneously: (1) Disturbances and uncertain dynamics rejection in finite time; (2) NE seeking for the second-order players; (3) Distributed action estimation on non-neighboring players under semi-Markov switching. By combining supertwisting-based Integral Sliding-Mode Control (ISMC) with a leader-follower consensus protocol, a novel robust NE seeking algorithm is constructed. Furthermore, to lessen dispensable information transmission, a sampled-data-based event-triggered mechanism is introduced. Incorporating the advantages of both semi-Markov switching and event-triggered mechanism, another NE seeking algorithm is proposed. Theoretical analysis via a Lyapunov-Krasovskii functional proves the leader-follower consensus can be achieved in the mean-square sense. Finally, a connectivity control game is formulated to validate the algorithms.
\end{abstract}

\begin{IEEEkeywords}
External disturbances; Uncertain dynamics; Semi-Markov switching topologies; Leader-follower consensus; Nash equilibrium
\end{IEEEkeywords}

\section{Introduction}
\IEEEPARstart{R}{ecently}, distributed NE seeking for the noncooperative game has gained much attention owing to its expansive application areas (see \cite{deng2021distributed,ye2017distributed,krilavsevic2021learning,salehisadaghiani2016distributed,li2024logical,yan2024incorporation}), where multiple independent but selfish players compete with each other to minimize their own cost functions by local information exchange. Up until now, many excellent research findings involving distributed NE seeking have been put forth \cite{deng2021distributed,ye2017distributed,krilavsevic2021learning,salehisadaghiani2016distributed,li2024logical,yan2024incorporation,wang2022robust,yuan2022event,hu2022distributed,bianchi2021continuous}.

Although the existing distributed NE seeking algorithms are capable of handling many problems, it is still essential to consider the physical dynamics of players during algorithms design. In modern engineering, noncooperative games are increasingly deployed on physical entities, such as YouBot vehicles in connectivity control games \cite{li2022distributed} or turbine generators in energy transmission problems \cite{cai2022nash}. In these practical applications, a player's strategy intrinsically corresponds to its physical state, such as spatial position or velocity. Consequently, the strategy updating process is strictly governed by the laws of Newtonian mechanics \cite{kamal2021control}, rendering abstract first-order systems used in \cite{wang2022robust,yuan2022event,hu2022distributed} inadequate. As a result, recent research has increasingly focused on NE-seeking strategies integrated with cyber-physical systems, addressing more realistic and intricate player dynamics (e.g., inertia or acceleration), such as second-order systems in \cite{ibrahim2018nash,ye2020distributed,liu2022distributed,ye2021distributed,deng2022generalized}. For example, the NE seeking problem with nonlinear second-order player dynamics was first investigated in \cite{ibrahim2018nash}, which effectively modeled the heavy-ball friction dynamics. Subsequently, based on velocity estimators, velocity measurement-free distributed NE seeking algorithms were proposed in \cite{ye2021distributed}, which fulfilled the robot tracking problem without additional velocity measurement devices. However, unlike ideal first-order algorithmic updates that only operate in the cyber layer, the physical execution of strategies in second-order systems is inevitably subjected to complex real-world environments. When physical players (e.g., YouBot vehicles) adjust their positions to seek the NE, they frequently encounter external environmental disturbances (e.g., wind gusts, unknown road friction) and unmodeled mechanical dynamics \cite{li2022distributed}. Acting directly on the input channels, these physical uncertainties will cause severe steady-state deviations and prevent players from converging to the true NE.

Fortunately, strides have been made to address external disturbances and uncertain dynamics during distributed NE seeking \cite{li2022distributed,ye2020distributed,wang2020distributed,huang2020distributed,zhang2019distributed}. For instance, robust NE seeking algorithms were developed in \cite{ye2020distributed} by treating unknown disturbances and the nonlinear dynamics of first-order systems as extended states. Similarly, the authors in \cite{wang2020distributed} considered second-order players subjected to time-varying disturbances, unknown dynamics, and attacks, proposing an NE seeking algorithm inspired by hybrid systems. However, the RISE-based observers in \cite{ye2020distributed,wang2020distributed} strictly require the second-order time derivatives of the disturbances to be bounded. To alleviate such restrictions on player dynamics, disturbances, and communication topologies, high-gain observers were introduced in \cite{huang2020distributed}, where the nonlinear dynamics and time-varying disturbances are required to be globally Lipschitz and bounded, respectively. Although this approach relaxes the requirement for bounded second-order derivatives, it is strictly confined to quadratic games and incurs high communication costs, thereby restricting its applicability in large-scale distributed systems. Alternatively, low-pass filter-based methods were developed in \cite{li2022distributed} to compensate for disturbances and unknown dynamics with bounded first derivatives. However, despite their methodological differences, these aforementioned methods typically guarantee only asymptotic disturbance rejection, meaning the residual influence of these disturbances persists throughout the transient phase. In practical systems like power grids \cite{knudsen2015dynamic}, the lack of rapid, finite-time robust control can trigger system collapse during frequent condition shifts. Motivated by these limitations, this paper aims to establish novel compensating mechanisms to relax existing assumptions and enhance the rate of disturbance rejection.

So far, the interferences in NE seeking caused by the physical features of multi-agent systems have been well discussed. However, most of these discussions assume a stable communication network with a fixed topology. In reality, topologies change constantly due to link failures \cite{lobel2010distributed}, cyber-attacks \cite{zhao2019resilient}, and environmental shifts \cite{guo2023stabilization}. Empirical observations reveal that link disconnections caused by hardware aging, battery depletion, and DoS attacks inherently exhibit specific statistical distribution characteristics \cite{akyildiz2002wireless, moore2006inferring}. Conventionally, researchers have relied on Markov chains to model these random topological variations \cite{fang2019distributed, zhao2012distributed}. Nevertheless, subsequent studies have uncovered that the standard Markov process-based formulation presents a fundamental limitation that hinders its applicability in real-world scenarios: it strictly relies on the memoryless property, assuming that the sojourn time of any topological state follows an exponential distribution. This may contradict engineering realities. In practical communication networks, the occurrences and recoveries of communication links are heavily time-dependent. For instance, empirical reliability studies demonstrate that sensor node failure times due to hardware aging or battery depletion typically follow Weibull or log-normal distributions rather than memoryless exponential ones \cite{liu2021low}. Similarly, in the context of cyber-security, the durations of cyber-attacks (e.g., jamming or DoS) are strictly constrained by the adversaries' finite energy and resources. Consequently, the probability of the network recovering from a jammed state becomes heavily dependent on the sojourn time of the attack, often exhibiting a heavy-tailed distribution \cite{moore2006inferring}. Therefore, to overcome this memoryless limitation and accurately represent practical communication networks, this paper adopts semi-Markov switching topologies, allowing the sojourn times to follow more general and realistic distributions (e.g., Weibull).

Beyond network instability, practical limitations on communication resources also arise. To significantly alleviate communication burden, event-triggered mechanisms have emerged as a powerful paradigm, enabling players to broadcast their information only when predefined triggering conditions are activated \cite{cai2023distributed}. Consequently, various distributed event-triggered NE seeking algorithms have been explored utilizing either static event-triggered mechanisms \cite{yuan2022event,shi2019distributed,yuan2017event} or dynamic ones \cite{zhang2021distributed,zhang2021nash,zhangandPengandYuanYuanandLiuHuapingandGaoZhan2022,cai2023distributed,xu2022hybrid,9881225}. However, the direct application of these mechanisms reveals fundamental limitations. First, both static and dynamic mechanism frequently employ asymptotically decaying thresholds, which inevitably leads to an increasing communication frequency as the system converges. Second, evaluating these triggering conditions inherently requires players to constantly monitor their own states and internal variables, which consumes substantial sensing and computational resources. This practical gap motivates the incorporation of a sampled-data-based event-triggered mechanism in this paper. By restricting triggering evaluations solely to discrete sampling instants, it eliminates continuous monitoring and rigorously excludes Zeno behavior. Nevertheless, integrating this mechanism with random semi-Markov switching introduces substantial theoretical complexities; the discrete nature of the updates, combined with random network variations, can cause the players' states to exhibit severe deviations from their ideal continuous trajectories, thereby posing substantial theoretical challenges in the design and analysis of this event-triggered mechanism.

The primary contributions can be outlined as follows
\begin{itemize}
	\item A supertwisting-based ISMC scheme combined with average consensus tracking is proposed for finite-time rejection of disturbances and uncertain dynamics during NE seeking. Compared to RISE-based observers \cite{ye2020distributed,wang2020distributed} requiring bounded second-order disturbance derivatives, and high-gain observers \cite{huang2020distributed} confined to quadratic games, the proposed ISMC imposes only bounded first-derivative conditions, accommodating broader practical non-vanishing disturbances. Furthermore, unlike existing asymptotic compensators \cite{li2022distributed,ye2020distributed,wang2020distributed,huang2020distributed,zhang2019distributed}, the proposed finite-time compensating mechanism completely eliminates residual disturbance influences.
	\item As far as we know, this paper presents the first research on distributed NE seeking incorporating semi-Markov switching topologies, effectively addressing real-world challenges like link failures \cite{liu2021low} and cyber-attacks \cite{moore2006inferring}. Unlike standard Markov switching topologies \cite{fang2019distributed} restricted by memoryless properties with constant transition rates, the semi-Markov switching topologies accommodates general sojourn time distributions (e.g., Weibull). This relaxation introduces time-varying transition rates, which invalidate the static algebraic Lyapunov tools utilized in \cite{fang2019distributed}. To overcome this theoretical challenge, a novel piecewise stochastic Lyapunov-Krasovskii functional is employed to establish mean-square convergence of the algorithm. Additionally, by utilizing a continuous-time NE seeking algorithm, it avoids the complicated step-size tuning inherent in discrete-time algorithms \cite{fang2019distributed}.
	\item To the best of our knowledge, this paper is the first to design a sampled-data-based event-triggered mechanism for distributed robust NE seeking algorithm to reduce communication burdens. By imposing a discrete sampling period, the proposed mechanism strictly lower-bounds inter-event times, effectively avoiding the exponential decay of triggering intervals inherent to static event-triggered mechanisms \cite{yuan2022event,shi2019distributed,yuan2017event}. Compared to dynamic event-triggered mechanisms \cite{zhang2021distributed,zhang2021nash,zhangandPengandYuanYuanandLiuHuapingandGaoZhan2022,cai2023distributed,xu2022hybrid,9881225}, the proposed mechanism avoids the calculation of additional internal dynamic variables, thereby reducing the computational complexity while still effectively prolonging the triggering intervals. Crucially, almost all of the existing distributed event-triggered NE seeking algorithms \cite{yuan2022event,shi2019distributed,yuan2017event,zhang2021distributed,zhang2021nash,zhangandPengandYuanYuanandLiuHuapingandGaoZhan2022,cai2023distributed,xu2022hybrid,9881225} necessitate the constant trigger monitoring, which heavily exhausts the measurement and computational resources of the players, the proposed mechanism only evaluates the triggering condition at discrete sampling instants, making the algorithm highly amenable to practical implementation.
\end{itemize}
\par
The remainder of this paper is organized as follows. Section \ref{sec2} provides essential preliminaries and outlines the NE seeking problem in the presence of external disturbances and uncertain dynamics. In Sections \ref{sec3} and \ref{sec4}, we introduce two innovative robust distributed NE seeking algorithms: one without and one with semi-Markov switching topologies, and present their respective stability results. In Section \ref{sec5}, a connectivity control game is presented to validate the efficacy of the proposed algorithms. Finally, Section \ref{sec6} offers the conclusion drawn from the research.
\section{Preliminaries}\label{sec2}
In this section, essential foundational elements are presented, offering key background information and fundamental concepts.
\subsection{Notations}
$\mathbb{R}$, $\mathbb{R}^N$, and $\mathbb{R}^{N\times M}$ denote the set of real numbers, $N$-dimensional vectors and $N\times M$ matrices, respectively. $\mathbb{Z}_+$ denotes for all the non-negative integers. The inverse, the transpose and the Kronecker product are denoted as $(\cdot)^{-1}$, $(\cdot)^{\mathrm{T}}$ and $\otimes$, respectively. The column vector is denoted as col$\left(x_{1},x_{2},\ldots,x_{N}\right)=[x_1^{\rm T},x_2^{\rm T},\ldots,x_N^{\rm T}]^{\rm T}$. diag$\{d_{1},d_{2},\ldots,d_{N}\}$ denotes a diagonal matrix whose principal diagonal elements are $d_{i}$ for $i\in\{1,\ldots,N\}$. $\mathbf{0}_{N}:=\text{col}(0, 0,\ldots, 0)\in\mathbb{R}^{N}$, $\mathbf{1}_{N}:=\text{col}(1,1,\ldots,1)\in \mathbb{R}^{N}$. $\mathbf{I}_{N}\in\mathbb{R}^{N\times N}$ represents an identity matrix with $N$ dimension. $|\cdot|$, $\Vert \cdot \Vert$ and $\Vert\cdot\Vert_{\infty}$ stand for the absolute value, the Euclidean norm and the infinite norm, respectively. The component-wise signum function of a vector $x\in\mathbb{R}^n$ is denoted by $\sgn(x)=(\sign(x_1),\sign(x_2),\ldots,\sign(x_n))^{\mathrm{T}}$, where $\sign(x_i)=-1$ if $x_i<0$; $\sign(x_i)=1$ if $x_i>0$; $\sign(x_i)=0$, otherwise. Let $\sig^q(x)=(\sig^q(x_1),\sig^q(x_2),\ldots,\sig^q(x_n))^{\mathrm{T}}$, as well as $\sig^q(x_i)=\sign(x_i)|x_i|^q$, $\forall q\in(0,+\infty)$. For matrices $A, B\in\mathbb{R}^{n\times n}$, $A\preceq B$ ($A\succ B$) represents that $A-B$ is negative semi-definite (positive definite). $\mathbb{E}$ stands for the mathematical expectation operator.
\subsection{Graph Theory}
This subsection incorporates essential concepts from graph theory \cite[Definition 1]{ANZAI199213}. A weighted undirected graph, denoted as $\mathscr{G:=(V,E)}$, consists of a node set $\mathscr{V:=}\{1,2,\ldots$ $,\mathit{N}$\} and an edge set $\mathscr{E}\subseteq\{\{i,j\}|i,j\in\mathscr{V}\}$. For each node $i$, $\mathscr{N}_{i}:=\{\mathit{j|\{j,i\}}\in$ $\mathscr{E}$\} represents its neighbor set, where the set $\{j,i\}=\{i,j\}\in\mathscr{E}$ implies that node $\mathit{i}$ and node $\mathit{j}$ can communicate with each other. A path in the graph is a sequence of distinct nodes, with an edge connecting any consecutive pair of nodes. An undirected graph $\mathscr{G}$ is connected if there exists a path between any two nodes.
	\par
	The weighted adjacency matrix of the undirected graph $\mathscr{G}$ is represented by $\mathscr{A:=}$ [$\mathit{a_{ij}}$]$\in\mathbb{R}^{\mathit{N\times N}}$, where $a_{ii}=0,~a_{ij}=a_{ji}>0$ if $\mathit{j}\in\mathscr{N}_{i}$, and conversely, $a_{ij}=a_{ji}=0$. The degree matrix $\mathscr{D}$ of graph $\mathscr{G}$ is denoted as
	$\mathscr{D}=$diag$\{\sum_{\mathit{j}=1}^{\mathit{N}}\mathit{a_{1j}},\ldots,\sum_{\mathit{j}=1}^{\mathit{N}}\mathit{a_{Nj}}\}$. The Laplacian matrix $L$ is defined as $L:=\mathscr{D}-\mathscr{A}$. For an undirected graph, it is obvious that $\mathbf{1}_{\mathit{N}}^{\rm T}L=\mathbf{0}_{\mathit{N}}^{\rm T}$ and $L\mathbf{1}_{\mathit{N}}=\mathbf{0}_{\mathit{N}}$. Denote $\lambda_{1}(L), \lambda_{2}(L), \ldots, \lambda_{N}(L)$ as the eigenvalues of $L$. For an undirected connected graph, we can conclude that $0=\lambda_{1}(L)<\lambda_{2}(L)\leq \ldots \leq \lambda_{\mathit{N}}(L)$.
	\subsection{Convex Analysis and Dynamical System}
	Several fundamental definitions and lemmas in convex analysis and dynamical system are presented in this subsection.
	\begin{itemize}
		\item If
	$
	f(k x_{1}+(1-k)x_{2})\leq k f(x_{1})+(1-k) f(x_{2}),$~for any $x_{1}, x_{2} \in \mathbb{R}^{N},~k \in[0,1],
	$ then $f:\mathbb{R}^{N}\rightarrow\mathbb{R}$ is a convex function.
\item If there exists a $\theta>0$ such that $\left\| f(x_{1})-f(x_{2})\right\|\leq\theta\Vert x_{1}-x_{2} \Vert$, for any $x_{1}, x_{2}\in\mathbb{R}^{N} $, then $f:\mathbb{R}^{N}\to\mathbb{R}$ is a $\theta$-Lipschitz function.
	\item If there exists a $\mu>0$ such that $(x_{1}-x_{2})^{\mathrm{T}} (g(x_{1})-g(x_{2}))\geq \mu \Vert x_{1}-x_{2}\Vert ^{2}$, for any $x_{1}, x_{2}\in\mathbb{R}^{\mathit{N}}$, then $g:\mathbb{R}^{N} \rightarrow \mathbb{R}^{N}$ is a $\mu$-strongly monotone function.
	\end{itemize}
\begin{lem}\cite[Lemma 1]{yu2015finite}\label{yinli35}
Consider the dynamical system described as follows
\begin{equation}\label{xitong3}
\dot{x}(t)=f(t,x(t)),~f(t,0)=\mathbf{0}_m,~ x\in\mathcal{D}\subset\mathbb{R}^m.
\end{equation}
If there is a continuous differentiable and positive definite function $H(x)$ defined in a neighborhood of the origin $\mathcal{U}\subset\mathcal{D}$, and the solution of \eqref{xitong3} satisfies
\begin{equation*}
\dot{H}(x(t))\leq-\zeta H^a(x(t)),
\end{equation*}
where constant $a\in(0,1)$ represents the power and $\zeta>0$. Then, the origin of the system \eqref{xitong3} is finite-time stable. The settling time $T_{\max}$ is estimated by
\begin{equation*}
T\leq T_{\max}\doteq\frac{H^{1-a}(x(0))}{\zeta(1-a)}.
\end{equation*}
\end{lem}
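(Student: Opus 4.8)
The plan is to verify the two ingredients of finite-time stability separately: Lyapunov stability of the origin, and convergence to the origin in finite time with the stated settling-time bound. First I would note that, since $a\in(0,1)$ and $H$ is positive definite, the hypothesis $\dot H(x(t))\le-\zeta H^a(x(t))\le 0$ shows that $H(x(t))$ is non-increasing along any solution of \eqref{xitong3}. A standard Lyapunov argument then yields stability of the origin: given $\varepsilon>0$ small enough that the closed ball $\overline{B}_{\varepsilon}\subset\mathcal U$, continuity and positive definiteness of $H$ let me pick $\delta>0$ with $\sup_{\|x\|\le\delta}H(x)<\min_{\|x\|=\varepsilon}H(x)$; then $\|x(0)\|\le\delta$ forces $H(x(t))<\min_{\|x\|=\varepsilon}H(x)$, hence $\|x(t)\|<\varepsilon$, for all $t\ge 0$. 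In particular the trajectory stays inside $\mathcal U$, so the differential inequality remains valid for all $t\ge 0$.

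Next I would extract the settling time from the scalar differential inequality, using that $H$ is continuously differentiable. On any interval where $H(x(t))>0$, the chain rule gives
\[
\frac{d}{dt}H^{1-a}(x(t))=(1-a)\,H^{-a}(x(t))\,\dot H(x(t))\le-(1-a)\zeta ,
\]
so integration yields $H^{1-a}(x(t))\le H^{1-a}(x(0))-(1-a)\zeta t$ as long as $H$ remains positive. Since $H^{1-a}\ge 0$, the right-hand side cannot stay positive beyond $t=T_{\max}=H^{1-a}(x(0))/(\zeta(1-a))$; hence there is some $t_{1}\le T_{\max}$ with $H(x(t_{1}))=0$, and because $H$ is non-increasing and non-negative, $H(x(t))=0$ for all $t\ge t_{1}$. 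Positive definiteness of $H$ then forces $x(t)=0$ for all $t\ge t_{1}$, i.e., the origin is reached by time $T_{\max}$ and the trajectory remains there afterwards. Combined with the Lyapunov stability established above, this is precisely finite-time stability of the origin with settling time bounded by $T_{\max}$.

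The delicate point, which I would handle with care, is the behavior at the instant $H$ vanishes: the comparison ODE $\dot y=-\zeta y^{a}$ has non-unique solutions through $y=0$, so one cannot appeal to uniqueness of solutions of \eqref{xitong3} either. I would instead argue directly with $V(t):=H^{1-a}(x(t))$, which is continuous and, on the open set $\{t:H(x(t))>0\}$, satisfies $\dot V\le-(1-a)\zeta$; an elementary integration (or a Dini-derivative comparison) on each such interval then shows that $V$ must reach $0$ no later than $T_{\max}$ and cannot leave $0$ thereafter, since $H$ is non-increasing. The remaining steps — the Lyapunov stability estimate and the chain-rule computation — are routine, so the proof reduces to this one-sided comparison argument together with the invariance of $\mathcal U$ noted above.
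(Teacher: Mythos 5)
Your argument is correct: the Lyapunov-stability step keeps the trajectory in $\mathcal U$, and the one-sided comparison via $V=H^{1-a}$ (with the Dini-derivative care at the instant $H$ vanishes, avoiding any appeal to uniqueness of the comparison ODE) gives exactly the settling-time bound $T\le H^{1-a}(x(0))/(\zeta(1-a))$. The paper itself offers no proof of this lemma—it is imported verbatim from the cited reference—and your argument is the standard Bhat--Bernstein-type proof underlying that reference, so there is nothing further to reconcile.
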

\begin{lem}\cite[Theorem 1]{chen2012distributed}\label{yinli123}
If there exists a positive constant $\nu$ such that $\sup_{t\in[0,\infty)}||\dot{v}_i(t)||_{\infty}\leq\nu$, for any $i\in\mathscr{V}$, then under the following algorithm,
\begin{equation*}
\begin{aligned}
\dot{\beta}_i&=-\alpha\sum_{j\in \mathscr{N}_i}\sign(\eta_i-\eta_j),\\
\eta_i&=\beta_i+||v_i||,
\end{aligned}
\end{equation*}
we have $\big|\eta_i-\frac{1}{N}\sum_{i=1}^N||v_i||\big|\rightarrow0$ in finite time, where $\alpha>\nu$.
\end{lem}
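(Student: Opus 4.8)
The plan is to pass to the tracking error, exploit the undirectedness of $\mathscr{G}$ to expose a dissipative structure, and then close the argument with Lemma~\ref{yinli35}. Write $r_i(t):=\|v_i(t)\|$, $\bar{r}(t):=\frac1N\sum_{i=1}^N r_i(t)$, and $\phi_i:=\eta_i-\bar{r}$. Since $\eta_i=\beta_i+r_i$, the first step is to note that $\sum_{i=1}^N\beta_i$ is a conserved quantity: summing $\dot{\beta}_i=-\alpha\sum_{j\in\mathscr{N}_i}\sign(\eta_i-\eta_j)$ over $i$ and grouping each unordered pair $\{i,j\}\in\mathscr{E}$ produces the contribution $\sign(\eta_i-\eta_j)+\sign(\eta_j-\eta_i)=0$ (oddness of $\sign$), so $\sum_i\dot{\beta}_i\equiv0$. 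Hence $\frac1N\sum_i\eta_i(t)=\bar{r}(t)+\frac1N\sum_i\beta_i(0)$ for all $t$; under the standard initialization $\sum_i\beta_i(0)=0$ used in \cite{chen2012distributed}, this says $\mathbf{1}_N^{\rm T}\phi(t)\equiv0$, i.e.\ the spatial average of $\eta$ already equals $\bar{r}$, and it only remains to drive every $\phi_i$ to the common value $0$.

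Because the vector field is discontinuous in $\eta$, I would read solutions in the Filippov sense and use the smooth, positive definite Lyapunov function $V=\tfrac12\sum_{i=1}^N\phi_i^2=\tfrac12\|\phi\|^2$, so that $\frac{d}{dt}V=\phi^{\rm T}\dot{\phi}$ for a.e.\ $t$. Along the closed loop $\dot{\phi}_i\in-\alpha\sum_{j\in\mathscr{N}_i}\mathrm{SGN}(\phi_i-\phi_j)+\dot{r}_i-\dot{\bar{r}}$, with $\mathrm{SGN}$ the set-valued sign; pairing the sign terms over undirected edges exactly as above converts $\sum_i\phi_i(-\alpha)\sum_{j\in\mathscr{N}_i}\sign(\phi_i-\phi_j)$ into $-\alpha\sum_{\{i,j\}\in\mathscr{E}}(\phi_i-\phi_j)\sign(\phi_i-\phi_j)=-\alpha\sum_{\{i,j\}\in\mathscr{E}}|\phi_i-\phi_j|$, and this identity in fact holds for every selection of the set-valued sign since ties $\phi_i=\phi_j$ contribute $0$ regardless; meanwhile $\mathbf{1}_N^{\rm T}\phi=0$ lets me discard the $\dot{\bar{r}}$ part of the drift. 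Using $|\dot{r}_i|=\big|\tfrac{d}{dt}\|v_i\|\big|\le\|\dot{v}_i\|\le\nu$, this gives, for a.e.\ $t$ along any Filippov solution,
\[
\frac{d}{dt}V\ \le\ -\alpha\sum_{\{i,j\}\in\mathscr{E}}|\phi_i-\phi_j|\ +\ \nu\,\|\phi\|_1 .
\]

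The technical heart is a connectivity estimate: for $\phi$ with $\mathbf{1}_N^{\rm T}\phi=0$ on a connected graph, $\sum_{\{i,j\}\in\mathscr{E}}|\phi_i-\phi_j|$ is bounded below by a structural multiple of $\|\phi\|$ (equivalently of $\|\phi\|_1$, by norm equivalence on the $(N-1)$-dimensional subspace $\{\phi\in\mathbb{R}^N:\mathbf{1}_N^{\rm T}\phi=0\}$), and — keeping track of the constants carefully — the net coefficient in the bound above stays strictly negative exactly when $\alpha>\nu$, yielding $\frac{d}{dt}V\le-\kappa\,(2V)^{1/2}=-\kappa\sqrt{2}\,V^{1/2}$ for some $\kappa>0$ on a neighbourhood of $\phi=0$. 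This is precisely the differential inequality in Lemma~\ref{yinli35} with power $a=\tfrac12$, so $V$ reaches $0$ in finite time (settling time at most $\sqrt{2V(0)}/\kappa$) and, since the inequality forces $\frac{d}{dt}V\le0$ once $V=0$, stays there; equivalently $\phi_i(t)=\eta_i(t)-\bar{r}(t)\to0$ for all $i\in\mathscr{V}$ in finite time, which is the claim.

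I expect the main obstacles to be exactly that last estimate — pinning the threshold at $\alpha>\nu$ rather than at a larger, $\lambda_2(L)$-dependent value, which needs the edge-difference sum to be controlled sharply rather than through a crude Cauchy--Schwarz step — together with the nonsmooth bookkeeping (justifying the chain rule for $V$ along Filippov trajectories, and checking that the derived differential inequality holds for every selection of the set-valued sign on the ties $\phi_i=\phi_j$), both of which are routine with Clarke's calculus but worth stating explicitly.
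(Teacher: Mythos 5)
First, note that the paper itself offers no proof of this statement: it is imported verbatim from \cite{chen2012distributed}, so there is nothing internal to compare your argument against. Judged on its own terms, your skeleton is sensible --- the conservation of $\sum_i\beta_i$ from the antisymmetry of $\sign$ over undirected edges, the reduction to the error $\phi_i=\eta_i-\bar r$ with $\mathbf{1}_N^{\rm T}\phi\equiv0$, the Filippov reading of the discontinuous right-hand side, and the closing appeal to Lemma~\ref{yinli35} with power $\tfrac12$ are all correct moves, and you are right to flag that the zero-sum initialization $\sum_i\beta_i(0)=0$ is genuinely needed (it is assumed in the cited theorem but omitted from the lemma as restated here).

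The gap is the step you defer as ``the technical heart'': it is not a sharpening issue but a false inequality for the Lyapunov function you chose. For $\phi$ with $\mathbf{1}_N^{\rm T}\phi=0$ on a connected graph, $\sum_{\{i,j\}\in\mathscr{E}}|\phi_i-\phi_j|$ admits no graph-independent lower bound of the form $c\,\|\phi\|_1$ with $c\ge1$: on a path of $N$ nodes with $\phi_i=a$ on the first half and $\phi_i=-a$ on the second, the edge sum is $2a$ while $\|\phi\|_1=Na$, so the ratio is $2/N$. Hence $\frac{d}{dt}V\le-\alpha\sum_{\mathscr{E}}|\phi_i-\phi_j|+\nu\|\phi\|_1$ cannot be made negative for $\alpha$ just above $\nu$; the quadratic $V=\tfrac12\|\phi\|^2$ delivers at best a threshold $\alpha>c(\mathscr{G})\,\nu$ with $c(\mathscr{G})$ as large as $N/2$, which is not the lemma being claimed and is not how $\alpha$ is sized in Algorithm~1. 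The graph-independent condition $\alpha>\nu$ in \cite{chen2012distributed} comes from a different nonsmooth Lyapunov construction of max/min ($\ell_\infty$) type, in which only agents attaining the extremal error matter and a single strictly ``descending'' edge at an extremal node already yields a decrease of order $\alpha$, rather than from a quadratic energy balanced against an edge-difference sum. A secondary slip: the hypothesis bounds $\|\dot v_i\|_\infty$, so the drift satisfies $\bigl|\frac{d}{dt}\|v_i\|\bigr|\le\|\dot v_i\|_2\le\sqrt{n}\,\nu$, not $\nu$; your estimate silently replaces the infinity-norm bound by a Euclidean one.
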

\subsection{Matrix Transformation}
Some useful lemmas on matrix transformation are given in this subsection.
%\begin{lem}\cite{hu2012event}
%For any symmetric matrix $Y$ and positive definite matrix $M$ with proper dimensions, and any constant $\kappa$, the following inequality holds
%\begin{equation*}
%-YM^{-1}Y\preceq\kappa^2M-2\kappa Y.
%\end{equation*}
%\end{lem}
\begin{lem}\cite[Lemma 3]{seuret2013wirtinger}\label{yinli32}
For given $n, m\in\mathbb{Z}_+$, $\chi\in(0,1)$, a positive definite matrix $Z\in\mathbb{R}^{n \times n}$, and two matrices $W_1,W_2\in\mathbb{R}^{n \times m}$, define the function $f(\chi, Z)$ for any vector $x\in \mathbb{R}^m$ as
$$
f(\chi, Z)=\frac{1}{\chi} x^{\rm T} W_1^{\rm T} Z W_1 x+\frac{1}{1-\chi} x^{\rm T} W_2^{\rm T} Z W_2 x .
$$
If there exists a matrix $S \in \mathbb{R}^{n \times n}$ such that $\left[\begin{array}{cc}Z & S \\ S^{\rm T} & Z\end{array}\right]\succ0$, then the following inequality holds
$$
\min _{\chi \in(0,1)} f(\chi, Z) \geq x^{\rm T}\left[\begin{array}{l}
W_1 \\
W_2
\end{array}\right]^{\rm T}\left[\begin{array}{ll}
Z & S \\
S^{\rm T} & Z
\end{array}\right]\left[\begin{array}{l}
W_1 \\
W_2
\end{array}\right] x.
$$
\end{lem}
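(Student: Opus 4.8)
The plan is to prove, for each fixed $\chi\in(0,1)$, an exact identity that writes $f(\chi,Z)$ as the target quadratic form plus one extra term which is automatically nonnegative under the LMI hypothesis, and then to minimize over $\chi$. Introduce the abbreviations $a:=W_1x$, $b:=W_2x$, $\xi:=\col(a,b)$ and $M:=\begin{bmatrix}Z&S\\S^{\rm T}&Z\end{bmatrix}\succ0$, so that the right-hand side of the asserted inequality is exactly $\xi^{\rm T}M\xi$ and is independent of $\chi$. It therefore suffices to establish $f(\chi,Z)\ge\xi^{\rm T}M\xi$ for every individual $\chi\in(0,1)$ and then pass to the infimum over $\chi$.

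First I would use the elementary identities $\tfrac1\chi=1+\tfrac{1-\chi}{\chi}$ and $\tfrac{1}{1-\chi}=1+\tfrac{\chi}{1-\chi}$ to split
$$f(\chi,Z)=a^{\rm T}Za+b^{\rm T}Zb+\tfrac{1-\chi}{\chi}a^{\rm T}Za+\tfrac{\chi}{1-\chi}b^{\rm T}Zb.$$
The decisive step — the one carrying the actual content of this reciprocally convex lemma — is to recognize the $\chi$-dependent part above as a quadratic form in the rescaled vector $\zeta(\chi):=\col\bigl(\sqrt{\tfrac{1-\chi}{\chi}}\,a,\,-\sqrt{\tfrac{\chi}{1-\chi}}\,b\bigr)$. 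Because the two scale factors multiply to $1$, a direct expansion gives $\zeta(\chi)^{\rm T}M\zeta(\chi)=\tfrac{1-\chi}{\chi}a^{\rm T}Za+\tfrac{\chi}{1-\chi}b^{\rm T}Zb-2a^{\rm T}Sb$, while $\xi^{\rm T}M\xi=a^{\rm T}Za+b^{\rm T}Zb+2a^{\rm T}Sb$; adding these two identities cancels the cross terms $\pm2a^{\rm T}Sb$ and reproduces the displayed expression for $f(\chi,Z)$. Hence $f(\chi,Z)=\xi^{\rm T}M\xi+\zeta(\chi)^{\rm T}M\zeta(\chi)$ for all $\chi\in(0,1)$.

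To conclude, I would invoke $M\succ0$, which forces $\zeta(\chi)^{\rm T}M\zeta(\chi)\ge0$, so that $f(\chi,Z)\ge\xi^{\rm T}M\xi$ for every $\chi\in(0,1)$; taking the infimum over $\chi$ preserves this bound (the infimum is attained as a genuine minimum when both $a\ne0$ and $b\ne0$, and when $a=0$ or $b=0$ the inequality is immediate since then $\xi^{\rm T}M\xi$ reduces to $b^{\rm T}Zb$ or $a^{\rm T}Za$). This yields $\min_{\chi\in(0,1)}f(\chi,Z)\ge\xi^{\rm T}M\xi=x^{\rm T}\begin{bmatrix}W_1\\W_2\end{bmatrix}^{\rm T}\begin{bmatrix}Z&S\\S^{\rm T}&Z\end{bmatrix}\begin{bmatrix}W_1\\W_2\end{bmatrix}x$, which is the assertion. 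The only real obstacle is guessing the rescaled vector $\zeta(\chi)$; once it is written down, the remainder is a one-line matching of coefficients together with the single positivity fact $\zeta(\chi)^{\rm T}M\zeta(\chi)\ge0$.
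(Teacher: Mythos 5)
Your proof is correct: the splitting $\tfrac1\chi=1+\tfrac{1-\chi}{\chi}$, $\tfrac{1}{1-\chi}=1+\tfrac{\chi}{1-\chi}$, the reciprocally scaled vector $\zeta(\chi)$, and the exact identity $f(\chi,Z)=\xi^{\rm T}M\xi+\zeta(\chi)^{\rm T}M\zeta(\chi)$ combined with $M\succ0$ yield the bound for every $\chi\in(0,1)$, and your remark on the degenerate cases $W_1x=0$ or $W_2x=0$ (where the infimum need not be attained) is the right caveat. For comparison, the paper itself gives no proof of this statement --- it is quoted directly as Lemma 3 of the cited reference (the reciprocally convex combination bound of Seuret--Gouaisbaut, originating with Park et al.), and your argument is essentially the standard proof of that lemma, so it is a valid self-contained replacement for the citation.
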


\begin{lem}\cite[Theorem 4.6]{hkk2002}\label{yinli33}
For any Hurwitz matrix $\mathcal{H}$, there exist positive definite matrices $\mathcal{P}$, and $\mathcal{Q}$ such that
\begin{equation*}
\mathcal{P}\mathcal{H}+\mathcal{H}^{\rm T}\mathcal{P}=-\mathcal{Q}.
\end{equation*}
\end{lem}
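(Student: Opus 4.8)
The plan is to give the classical constructive proof via the Lyapunov integral. First I would fix an arbitrary positive definite matrix $\mathcal{Q}$ (for concreteness one may take $\mathcal{Q}=\mathbf{I}$, but any $\mathcal{Q}\succ0$ serves), and define the candidate
$$
\mathcal{P}:=\int_0^\infty e^{\mathcal{H}^{\rm T} t}\,\mathcal{Q}\,e^{\mathcal{H} t}\,dt .
$$
The first task is to show this improper integral is well defined. Since $\mathcal{H}$ is Hurwitz, every eigenvalue has strictly negative real part, so there exist constants $c\geq1$ and $\lambda>0$ with $\|e^{\mathcal{H}t}\|\leq c\,e^{-\lambda t}$ for all $t\geq0$; hence the integrand is bounded in norm by $c^2\|\mathcal{Q}\|\,e^{-2\lambda t}$, which is integrable on $[0,\infty)$, and $\mathcal{P}$ exists as a finite matrix. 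Symmetry of $\mathcal{P}$ is immediate because $\mathcal{Q}$ is symmetric and $\big(e^{\mathcal{H}^{\rm T}t}\mathcal{Q}e^{\mathcal{H}t}\big)^{\rm T}=e^{\mathcal{H}^{\rm T}t}\mathcal{Q}e^{\mathcal{H}t}$.

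Next I would verify that $\mathcal{P}$ is positive definite. For any $x\neq\mathbf{0}$,
$$
x^{\rm T}\mathcal{P}x=\int_0^\infty \big(e^{\mathcal{H}t}x\big)^{\rm T}\mathcal{Q}\big(e^{\mathcal{H}t}x\big)\,dt ,
$$
and since $e^{\mathcal{H}t}$ is invertible for every $t$ the vector $e^{\mathcal{H}t}x$ is nonzero, so the integrand is a continuous, strictly positive function of $t$; therefore the integral is strictly positive, giving $\mathcal{P}\succ0$.

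Finally, for the Lyapunov identity itself, using $\tfrac{d}{dt}e^{\mathcal{H}t}=\mathcal{H}e^{\mathcal{H}t}=e^{\mathcal{H}t}\mathcal{H}$ and pulling the constant matrices $\mathcal{H}^{\rm T}$, $\mathcal{H}$ inside the integral by linearity, one recognizes the integrand as a total derivative:
$$
\mathcal{H}^{\rm T}\mathcal{P}+\mathcal{P}\mathcal{H}=\int_0^\infty\Big(\mathcal{H}^{\rm T}e^{\mathcal{H}^{\rm T}t}\mathcal{Q}e^{\mathcal{H}t}+e^{\mathcal{H}^{\rm T}t}\mathcal{Q}e^{\mathcal{H}t}\mathcal{H}\Big)dt=\int_0^\infty\frac{d}{dt}\Big(e^{\mathcal{H}^{\rm T}t}\mathcal{Q}e^{\mathcal{H}t}\Big)dt .
$$
Evaluating this telescoping integral gives $\lim_{t\to\infty}e^{\mathcal{H}^{\rm T}t}\mathcal{Q}e^{\mathcal{H}t}-\mathcal{Q}=\mathbf{0}-\mathcal{Q}=-\mathcal{Q}$, where the limit vanishes again by the bound $\|e^{\mathcal{H}t}\|\leq c\,e^{-\lambda t}$ and at $t=0$ both exponentials reduce to the identity. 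Rearranging yields $\mathcal{P}\mathcal{H}+\mathcal{H}^{\rm T}\mathcal{P}=-\mathcal{Q}$, as claimed. The only delicate point is the quantitative exponential decay estimate for $e^{\mathcal{H}t}$ — this is precisely where the Hurwitz hypothesis enters, and it simultaneously legitimizes the convergence of the defining integral and the vanishing of the boundary term at infinity; everything else is bookkeeping. (Uniqueness of $\mathcal{P}$ for a given $\mathcal{Q}$, although not needed here, would follow by noting that $\mathcal{H}^{\rm T}X+X\mathcal{H}=\mathbf{0}$ has only the trivial solution when $\mathcal{H}$ is Hurwitz, since $\lambda_i(\mathcal{H})+\lambda_j(\mathcal{H})\neq0$ for all $i,j$.)
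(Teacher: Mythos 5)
Your proof is correct and is essentially the standard argument from the cited source (Khalil, Theorem 4.6), which the paper itself invokes without reproving: fix $\mathcal{Q}\succ0$, define $\mathcal{P}$ by the Lyapunov integral, and use the exponential decay of $e^{\mathcal{H}t}$ to justify convergence, positive definiteness, and the telescoping evaluation. Nothing to add.
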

\begin{lem}\cite[Lemma 3.3]{zuo2016distributed}\label{yinli34}
For any constants $h_1,h_2,\ldots,$ $h_n\geq0$ and $\varepsilon\in(0,1]$, we have $\sum_{i=1}^nh_i^{\varepsilon}\geq(\sum_{i=1}^nh_i)^{\varepsilon}.$
\end{lem}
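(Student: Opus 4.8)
The plan is to reduce the claim to the elementary scalar inequality
$x^{\varepsilon}\ge x$ for every $x\in[0,1]$ and every $\varepsilon\in(0,1]$. This fact itself follows because the map $\varepsilon\mapsto x^{\varepsilon}$ is nonincreasing on $(0,1]$ for fixed $x\in[0,1]$ (so $x^{\varepsilon}\ge x^{1}=x$); equivalently, since $\ln$ is increasing and $\ln x\le 0$, one has $\varepsilon\ln x\ge\ln x$, hence $x^{\varepsilon}\ge x$.

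First I would dispose of the degenerate case $\sum_{i=1}^{n}h_i=0$: then every $h_i=0$ and, with the convention $0^{\varepsilon}=0$, both sides vanish, so the inequality holds with equality. Next, assuming $S:=\sum_{i=1}^{n}h_i>0$, I would normalize by setting $t_i:=h_i/S\in[0,1]$, so that $\sum_{i=1}^{n}t_i=1$. Applying $x^{\varepsilon}\ge x$ with $x=t_i$ and summing over $i$ gives $\sum_{i=1}^{n}t_i^{\varepsilon}\ge\sum_{i=1}^{n}t_i=1$. Multiplying through by $S^{\varepsilon}>0$ and using $h_i^{\varepsilon}=S^{\varepsilon}t_i^{\varepsilon}$ yields $\sum_{i=1}^{n}h_i^{\varepsilon}\ge S^{\varepsilon}=\left(\sum_{i=1}^{n}h_i\right)^{\varepsilon}$, which is the assertion.

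If one prefers to avoid the normalization step, the same result follows by induction on $n$: the base case $n=1$ is trivial, and for the inductive step it suffices to establish the two-term inequality $a^{\varepsilon}+b^{\varepsilon}\ge(a+b)^{\varepsilon}$ for $a,b\ge0$, which is precisely the normalized argument applied with $n=2$ (write $t=a/(a+b)$ when $a+b>0$ and use $t^{\varepsilon}+(1-t)^{\varepsilon}\ge t+(1-t)=1$). I do not anticipate any genuine obstacle here; the only ingredient beyond arithmetic is the monotonicity of $x\mapsto x^{\varepsilon}$ in the exponent on $[0,1]$, and once $x^{\varepsilon}\ge x$ is secured the remainder is a one-line summation.
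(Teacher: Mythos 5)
Your proof is correct. The paper does not prove this lemma itself---it is quoted directly from the cited reference---and your argument (handle the degenerate case $\sum_i h_i=0$, then normalize by $S=\sum_i h_i$ and use the scalar fact $x^{\varepsilon}\ge x$ for $x\in[0,1]$, $\varepsilon\in(0,1]$, before rescaling by $S^{\varepsilon}$) is exactly the standard proof of that cited result, so there is nothing to add.
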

\subsection{Supertwisting-based ISMC Scheme}\label{supertwist}
This subsection introduces the supertwisting-based control scheme, which serves as the theoretical foundation for the compensating mechanism proposed in this paper.

While traditional first-order Sliding Mode Control (SMC) provides robust attenuation against external disturbances, it necessitates a discontinuous, high-frequency switching control signal (typically a signum function, $\operatorname{sign}(\sigma)$) to maintain the system state on the sliding surface. This high-frequency switching induces ``chattering'', severe high-frequency oscillations that can seriously degrade physical actuators. To alleviate chattering while ensuring finite-time disturbance rejection, the supertwisting-based ISMC was proposed in \cite{levant1993sliding}. As a second-order SMC scheme, it yields a continuous control signal by embedding the discontinuous switching function within an integral term. A standard supertwisting-based ISMC scheme is governed by
\begin{equation}\label{twist}\begin{aligned} & \dot{\sigma}_1=-\kappa_1 \sig^{\frac{1}{2}}(\sigma_1)+\sigma_2+d_1(t), \\ & \dot{\sigma}_2=-\kappa_2 \operatorname{sign}(\sigma_1)+d_2(t), \end{aligned}\end{equation}
where $\sigma_1$ and $\sigma_2$ denote the scalar variables, $\kappa_1$ and $\kappa_2$ are control gains to be designed, and $d_1(t)$ and $d_2(t)$ are external perturbations. By appropriately tuning the gains $\kappa_1$ and $\kappa_2$, the system \eqref{twist} ensures finite-time compensation for bounded perturbations, provided $d_1(t)=0$ and $|d_2(t)| \leq \bar{d}$.

Although extensively applied to consensus and optimization problems \cite{yu2015finite,feng2019finite}, the standard supertwisting-based ISMC scheme is inadequate for the distributed NE seeking problem addressed herein. Specifically, the coexistence of external disturbances and state-dependent uncertain dynamics may violate the global boundedness of the perturbations required by this scheme, which motivates the design of a novel composite compensating mechanism in the subsequent sections.
\subsection{Problem Description}
This paper explores a noncooperative game involving $N$ cyber-physical players communicating through an undirected graph $\mathscr{G:=(V,E)}$. In the cyber layer, each player tends to minimize its own cost function $f_{i}\left(x_{i}, x_{-i}\right)$ where $x_{i}\in\mathbb{R}^n$ is player $i$'s action, and $x_{-i}:=\text{col}(x_{1},\ldots,x_{i-1},x_{i+1},\ldots,x_{N})$. The ultimate task for player $i$ is given as
\begin{equation}\label{na12}\begin{array}{ll}
\underset{x_{i}\in\mathbb{R}^n}{\mbox{minimize  }}  &f_{i}(x_{i},x_{-i}).\
\end{array}\end{equation}While \eqref{na12} defines players' ideal objectives, the actual action execution in cyber-physical systems relies on physical entities, such as the movement of mobile sensor vehicles, where a second-order system involving velocity and acceleration information is considered. Furthermore, during the process of task achievement, the physical movement of player $ i $ will inevitably encounter external environmental disturbances (e.g., wind gusts, unknown road friction) and unmodeled mechanical dynamics. Thus, the physical execution of player $i$ can be formulated as the following disturbed second-order system
	\begin{align}\label{erjie}
		\dot x_{i}(t)&=v_i(t),\\
        \dot v_i(t)&=u_i+\omega_i(t)+\varrho_i(\mathbf{x})\nonumber,
	\end{align}
where the action $x_i$ corresponds to player $i$'s position, $v_i\in\mathbb{R}^n$ denotes player $i$'s velocity, $u_i$ is player $i$'s control input, $\mathbf{x}=\col(x_1,x_2,\ldots,x_N)$, $\omega_i(t)$ and $\varrho_i(\mathbf{x})$ are bounded disturbance and uncertain dynamics, respectively.

For the second-order system \eqref{erjie}, the ultimate goal of this paper is not only to steer players' positions $\mathbf{x}$ to the NE, but to drive players' velocities $v:=\col(v_1,v_2,\ldots,v_N)$ to $\mathbf{0}_{Nn}$.
\begin{defn}\cite[Definition 1]{hu2022distributed}
An action profile $x^{*}:=\text{col}(x_{1}^{*},\ldots,x_{N}^{*})\in \mathbb{R}^{Nn}$ is called an NE of the noncooperative game \eqref{na12} under the condition that
\begin{equation*}
			f_{i}(x_{i}^{*},x_{-i}^{*})\leq f_{i}(x_{i},x_{-i}^{*}), ~\forall x_i\in\mathbb{R}^{n},~\forall i\in\mathscr{V},
		\end{equation*}
where $x_{-i}^*:=\text{col}(x_{1}^*,\ldots,x_{i-1}^*,x_{i+1}^*,\ldots,x_{N}^*).$
\end{defn}

		In other words, an NE signifies a situation where no player can decrease its cost by solely altering its own action.

To implement the forthcoming convergence analysis, the following assumptions and definition are required.
\begin{assum}\label{jiashe1}
The communication graph $\mathscr{G}$ is undirected and connected.
\end{assum}
\begin{defn}
The pseudo-gradient for the cost functions of all players in stack form is defined as follows,
		$$F(\mathbf{x}):=\text{col}(\nabla_{1}f_{1}(\mathbf{x}),\nabla_{2}f_{2}(\mathbf{x}),\ldots,\nabla_{N}f_{N}(\mathbf{x})),$$
in which $\nabla_{i}f_{i}(\mathbf{x}):=\frac{\partial f_{i}}{\partial x_{i}}(\mathbf{x})$.
		
	\end{defn}
\begin{assum}\label{jiashe22} The following declarations hold.
\begin{itemize}
\item For each $i\in\mathscr{V}$, $f_i(\mathbf{x})$ is continuously differentiable and convex with respect to $x_i$ for any fixed $x_{-i}$, as well as $\nabla_i f_i(\mathbf{x})$ is globally Lipschitz with modulus $l_i$.
\item $F:\mathbb{R}^{Nn}\to\mathbb{R}^{Nn}$ is a $\mu$-strongly monotone function.
\end{itemize}
\end{assum}
\begin{rem}
Under Assumption \ref{jiashe22}, we can obtain that there exists a unique NE $x^*$ satisfying $F(x^*)=\mathbf{0}_{Nn}$ based on \cite[Proposition 12.3]{facchinei201012}.
\end{rem}
\begin{assum}\label{jiashe24}
For each $i\in\mathscr{V}$, $\varrho_i(\mathbf{x})$ is continuously differentiable, bounded and $\|\nabla \mathbf{\varrho}(\mathbf{x})\|\leq\tilde{g}$, where $\varrho(\mathbf{x}):=\col(\varrho_1(\mathbf{x}),\varrho_2(\mathbf{x}),\ldots,\varrho_N(\mathbf{x}))$, for some $\tilde{g}>0$.
\end{assum}
\begin{assum}\label{jiashe25}
For each $i\in\mathscr{V}$, $\omega_i(t)$ and its time derivative $\dot{\omega}_i$ are bounded by known constants.
\end{assum}
\begin{rem}
In this paper, we relax the assumption on the boundedness of the second-order derivatives of the external disturbances compared with \cite{wang2020distributed}. Moreover, many types of practical disturbances satisfy this relaxed assumption, such as sinusoidal, constant, and ramp disturbances.
\end{rem}
\section{Distributed NE Seeking without Semi-Markov Switching}\label{sec3}
This section designs a distributed algorithm to seek the NE of game \eqref{na12}. Since evaluating the cost $f_i(x_i,x_{-i})$ over a local network requires the unobservable actions of others ($x_{-i}$), each player must employ an estimation method. Inspired by the leader-follower protocol, player $i$ maintains an estimator $y^i:=\col(y_1^i,\ldots,y_N^i)\in\mathbb{R}^{Nn}$, where $y_i^i=x_i$ and $y_j^i$ estimates player $j$'s action for $j\neq i$. Based on this estimation, the operational flow of Algorithm 1 is summarized in TABLE \ref{a1}.
%\begin{table}[!htbp]\label{a1}
%\normalsize
%	\centering
%\setlength{\abovedisplayskip}{0pt}
%\setlength{\belowdisplayskip}{0pt}
%\renewcommand\arraystretch{1.2}
%	\begin{tabular}{l}
%		\hline
%		\color{black}\textbf{Algorithm 1:}  \color{black}\textbf{Distributed robust NE seeking}\\
%		\hline
%		\color{black}\textbf{Initialization:} For $i\in \mathscr{V}$, set $0<k_{1}<\frac{\mu}{\max_{i\in\mathscr{V}}\{l_i\}^2}$,\\
%~~~~~~~~~~~~~~~~~~~~~~~$k_{i 2},k_{i 4}>0$, $k_{i3}>\|\dot{\omega}_i\|_{\infty}$\\
%		\color{black}\textbf{Dynamics:}  For $i\in\mathscr{V}$,\\
%$\dot{x}_{i}=v_{i}$ \\
%$\dot{v}_{i}=\omega_{i}+\varrho_i(\mathbf{x})+u_{i}$\\
%$\dot{\phi}_{i}=-k_{i3} \operatorname{sgn}\left(s_{i}\right)-\tilde{g}N\eta_i\sgn(s_i) $\\
%$\dot{\beta}_i=-\alpha\sum_{j\in \mathscr{N}_i}\sign(\eta_i-\eta_j)$\\
%$\varepsilon\dot{y}_{j}^i=-k_{i 4}\left(\sum_{m=1}^{N} a_{i m}\left(y^{i}_j-y^{m}_j\right)+a_{i j}\left(y^{i}_j-x_{j}\right)\right)$\\
%		\hline
%	\end{tabular}
%\end{table}
\begin{table}[!htbp]
\vspace{-0.2cm}
\small
    \centering
\setlength{\abovedisplayskip}{0pt}
\setlength{\belowdisplayskip}{0pt}
\renewcommand\arraystretch{1}
\caption{The operational flow of Algorithm 1\label{a1}}
    \begin{tabular}{p{0.93\columnwidth}}
        \hline
        \textbf{Algorithm 1:} \color{black}\textbf{Distributed robust NE seeking}\\
        \hline
        \textbf{Initialization:}\\
        \quad For $i\in \mathscr{V}$, initial states $x_i(0)$, $v_i(0)$, $y^i(0)$.\\
        \quad Initial compensators: $\beta_i(0) \!=\! 0$, $s_i(0) \!=\! \mathbf{0}_n$, $\phi_i(0) \!=\! \mathbf{0}_n$.\\
        \quad Set parameters: $0<k_{1}<\frac{\mu}{\max_{i\in\mathscr{V}}\{l_i\}^2}$, $k_{i 2}>0$, \\\quad\quad\quad\quad\quad~~\quad\quad$k_{i 4}>0$, $k_{i3}>\|\dot{\omega}_i\|_{\infty}$, $\alpha$, $\tilde{g}$.\\
        \textbf{Main Loop (Algorithm Execution):}\\
        \quad \textbf{while} $t \ge 0$ \textbf{do}\\
        \quad\quad \textbf{1. State Measurement \& Communication:}\\
        \quad\quad\quad Measure local state $x_i(t)$ and velocity $v_i(t)$.\\
        \quad\quad\quad Receive $y^m(t)$ and $\eta_m(t)$ from neighbors $m \in \mathscr{N}_i$.\\
        \quad\quad\quad Compute $u_i^0(t) = -k_1 \nabla_i f_i(y^i(t)) - v_i(t)$.\\
        \quad\quad \textbf{2. Compensator Variable Update:}\\
        \quad\quad\quad $s_i(t) = v_i(t) - v_i(0) - \int_0^t u_i^0(\rho)\text{d}\rho$.\\
        \quad\quad\quad $\eta_i(t) = \beta_i(t) + \|v_i(t)\|$.\\
        \quad\quad\quad $\dot{\beta}_i(t) = -\alpha \sum_{j \in \mathscr{N}_i} \operatorname{sign}(\eta_i(t) - \eta_j(t))$.\\
        \quad\quad\quad $\dot{\phi}_i(t) = -k_{i3}\operatorname{sgn}(s_i(t)) - \tilde{g}N\eta_i(t)\operatorname{sgn}(s_i(t))$.\\
        \quad\quad\quad $u_{i}^{r}(t)=-k_{i 2}\operatorname{sig}^{\frac{1}{2}}\left(s_{i}(t)\right)+\phi_{i}(t)$.\\
        \quad\quad \textbf{3. Estimation \& Control Law Computation:}\\
        \quad\quad\quad $u_i(t) = u_i^r(t)+u_i^0(t)$.\\
        \quad\quad\quad Update action estimator:\\
        \quad\quad\quad $\varepsilon\dot{y}_{j}^i(t) = -k_{i 4}\big(\sum_{m=1}^{N} a_{i m}(y^{i}_j(t)-y^{m}_j(t)) $\\
        \quad\quad\quad\quad\quad\quad~~~$+a_{i j}(y^{i}_j(t)-x_{j}(t))\big)$.\\
        \quad\quad \textbf{4. State Evolution:}\\
        \quad\quad\quad Apply control input $u_i(t)$ to the physical plant:\\
        \quad\quad\quad $\dot{x}_i(t) = v_i(t)$, \ \ $\dot{v}_i(t) = \omega_i(t) + \varrho_i(\mathbf{x}(t)) + u_i(t)$.\\
        \quad \textbf{end while}\\
        \hline
    \end{tabular}
    \vspace{-0.2cm}
\end{table}

To clearly explain the structure of Algorithm 1, we decompose it into three main parts in the following.
\begin{itemize}
  \item Second-order NE seeking
  \begin{equation}\label{nashxunzhao}
  \left\{\begin{aligned}
  \dot{x}_{i}&=v_{i}, \\
\dot{v}_{i}&=\omega_{i}+\varrho_i(\mathbf{x})+u_{i},\\
u_i&=u_i^0+u_i^r,\\
u_{i}^{0}&=-k_{1} \nabla_{i} f_{i}\left(y^{i}\right)-v_i.
\end{aligned}\right.
\end{equation}
  \item Finite-time compensating mechanism for disturbances and uncertain dynamics
  \begin{equation}\label{buchangjizhi}
  \left\{\begin{aligned}
  u_{i}^{r}&=-k_{i 2} \operatorname{sig}^{\frac{1}{2}}\left(s_{i}\right)+\phi_{i},\\
\dot{\phi}_{i}&=-k_{i3} \operatorname{sgn}\left(s_{i}\right)-\tilde{g}N\eta_i\sgn(s_i),\\
s_{i}&=v_{i}-v_{i}(0)-\int_{0}^{t} u_{i}^{0}(\rho) d \rho,\\
\dot{\beta}_i&=-\alpha\sum_{j\in \mathscr{N}_i}\sign(\eta_i-\eta_j),\\
\eta_i&=\beta_i+||v_i||.
\end{aligned}\right.
\end{equation}
  \item Action estimation
    \begin{equation}\label{guji1}
  \begin{aligned}
 \varepsilon\dot{y}_{j}^i=-k_{i 4}\left(\sum_{m=1}^{N} a_{i m}\left(y^{i}_j-y^{m}_j\right)+a_{i j}\left(y^{i}_j-x_{j}\right)\right).
\end{aligned}
\end{equation}
\end{itemize}
\begin{rem}
The NE seeking algorithm 1 is proposed by adopting supertwisting-based ISMC scheme in $s_i$ and $\phi_i$ combined with average tracking protocol (see Lemma \ref{yinli123}) in $\eta_i$ and $\beta_i$, which can reject disturbances and uncertain dynamics in finite time. Specifically, the control input $u_i$ in Algorithm 1 has two parts: the second-order NE seeking controller $u_i^0$ and ISMC controller $u_i^r$.
\end{rem}

Since Algorithm 1 has a two-time-scale structure, to facilitate the subsequent stability analysis, singular perturbation technique is employed to decouple and analyze the distinct time scales. Note that within this technique, Algorithm 1 is typically referred to as the corresponding original system. In this system composed of \eqref{nashxunzhao}, \eqref{buchangjizhi} and \eqref{guji1}, the physical states $x_i,~v_i$ and the compensator variables $\beta_i,~\phi_i$ act as the slow variables evolving in the normal time scale $t$, whereas the action estimation state $y^i$ acts as the fast variable modulated by the small singular perturbation parameter $\varepsilon$.

First, to derive the reduced system (which describes the slow dynamics), we artificially set $\varepsilon = 0$ in the fast dynamics \eqref{guji1}. This yields the following equation$$\mathbf{0}_n = -k_{i 4}\left(\sum_{m=1}^{N} a_{i m}\left(y^{i}_j-y^{m}_j\right)+a_{i j}\left(y^{i}_j-x_{j}\right)\right).$$By solving this equation, we obtain the unique isolated root, which represents the quasi-steady state of the fast variable, as $\mathbf{y}_{\sigma}:=\col(y^1_{\sigma},y^2_{\sigma},\ldots,y^N_{\sigma})=\mathbf{1}_N\otimes\mathbf{x}$. Substituting this quasi-steady state $\mathbf{y}_{\sigma}$ back into the slow dynamics \eqref{nashxunzhao} and \eqref{buchangjizhi}, we obtain the reduced system that governs the original system's behavior on the slow manifold\begin{equation}\label{000}\left\{\begin{aligned}\dot{x}_{i}&=v_{i}, \\
\dot{v}_{i}&=\omega_{i}+\varrho_i(\mathbf{x})+u_{i}^r-k_{1} \nabla_{i} f_{i}\left(\mathbf{x}\right)-v_i,\\
\dot{\phi}_{i}&=-k_{i3} \operatorname{sgn}\left(s_{i}\right)-\tilde{g}N\eta_i\sgn(s_i),\\
\dot{\beta}_i&=-\alpha\sum_{j\in \mathscr{N}_i}\sign(\eta_i-\eta_j).
\end{aligned}\right.\end{equation}

Next, to derive the boundary-layer system (which describes the fast dynamics), we introduce the stretched fast time scale $\tau = \frac{t}{\varepsilon}$. In this fast time scale $\tau$, taking the slow variable $x_i$ as an example, its derivative with respect to $\tau$ becomes $\frac{d x_i}{d\tau} = \varepsilon v_i$. Consequently, as $\varepsilon \to 0$, all the slow variables act as frozen constants (e.g., $\frac{d x_i}{d\tau} = 0$). Evaluating \eqref{guji1} in the $\tau$ time domain with frozen slow variables yields the boundary-layer system\begin{equation}\label{bianjie1}\begin{aligned}\frac{\text{d}{y}_{j}^i}{\text{d}\tau}=-k_{i 4}\left(\sum_{m=1}^{N} a_{i m}\left(y^{i}_j-y^{m}_j\right)+a_{i j}\left(y^{i}_j-x_{j}\right)\right).\end{aligned}\end{equation}

Note that through system decomposition, the variables in the reduced system and the boundary-layer system are distinct from those in the original system. However, for the sake of convenience, we will not define new variables to distinguish them. With the original system explicitly linked to the reduced system \eqref{000} and the boundary-layer system \eqref{bianjie1}, the rest of the analysis can be methodically divided into three parts: (i) Analyze the stabilities of the states of the boundary-layer system and the reduced system. (ii) Show the closeness of solutions between the limit systems and the original system. (iii) Give the stability results of the state of the original system.

We start by dealing with disturbances and uncertain dynamics. In the next two lemmas, we first prove that the supertwisting-based ISMC scheme with average tracking protocol is able to compensate for the influence of disturbances and uncertain dynamics in finite time.
\begin{lem}\label{yinli42}\label{yinlili1}
Under Assumptions \ref{jiashe22}-\ref{jiashe25}, if $k_{i3}>\|\dot{\omega}_i\|_{\infty}$ for $i\in\mathscr{V}$, then the influence of disturbances and uncertain dynamics will be eliminated in finite time by the reduced system \eqref{000}, i.e., there exists a positive constant $T_{\max}$ such that $\lim_{t\rightarrow T_{\max}}\|u_{i}^{r}+\omega_i+\varrho_{i}(\mathbf{x})\|=0$.
\end{lem}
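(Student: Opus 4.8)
The plan is to recognize the $s_i$--$\phi_i$ subsystem of Algorithm~1 as a component-wise super-twisting controller acting on the integral sliding surface $s_i$, to use the cooperative term $\tilde g N\eta_i$ to make the switching gain dominate the rate of the total matched perturbation, and then to invoke the finite-time convergence of the super-twisting algorithm. Concretely, I would first differentiate the sliding variable: since $s_i=v_i-v_i(0)-\int_0^t u_i^0(\rho)\,\mathrm d\rho$ and, by \eqref{erjie}, $\dot v_i=\omega_i+\varrho_i(\mathbf{x})+u_i$ with $u_i=u_i^0+u_i^r$, the sliding dynamics read
\[
\dot s_i=u_i^r+\delta_i=-k_{i2}\sig^{1/2}(s_i)+\phi_i+\delta_i,\qquad
\dot\phi_i=-\big(k_{i3}+\tilde g N\eta_i\big)\sgn(s_i),
\]
where $\delta_i:=\omega_i+\varrho_i(\mathbf{x})$ and $s_i(0)=\mathbf 0_n$. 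The key observation is that the quantity in the lemma is exactly $u_i^r+\omega_i+\varrho_i(\mathbf{x})=\dot s_i$, so it suffices to establish the second-order sliding mode $s_i=\dot s_i=\mathbf 0_n$ in finite time; note also that the component-wise structure of $\sgn$ and $\sig^{1/2}$ decouples this into $n$ scalar super-twisting systems sharing the same gain.

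Second, I would show that the switching gain dominates $\dot\delta_i$. By Assumption~\ref{jiashe25}, $\|\dot\omega_i\|_\infty$ is a known constant, and by Assumption~\ref{jiashe24} together with $\dot{\mathbf{x}}=v$ one has $\|\tfrac{\mathrm d}{\mathrm dt}\varrho(\mathbf{x})\|\le\tilde g\|v\|\le\tilde g\sum_{j\in\mathscr V}\|v_j\|$. Feeding the velocity magnitudes into the protocol $\dot\beta_i=-\alpha\sum_{j\in\mathscr N_i}\sgn(\eta_i-\eta_j)$, $\eta_i=\beta_i+\|v_i\|$ of \eqref{buchangjizhi}, Lemma~\ref{yinli123} (with $\alpha$ exceeding a uniform bound on $\|\dot v_i\|_\infty$) yields that $\eta_i$ reaches $\tfrac1N\sum_{j\in\mathscr V}\|v_j\|$ in finite time, so there is $T_1$ such that $N\eta_i=\sum_{j\in\mathscr V}\|v_j\|\ge\|v\|$ for all $t\ge T_1$. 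Hence for $t\ge T_1$ the gain satisfies $k_{i3}+\tilde g N\eta_i>\|\dot\omega_i\|_\infty+\tilde g\|v\|\ge\|\dot\delta_i\|_\infty$ in every coordinate, which is exactly the role played by the $\eta_i,\beta_i$ dynamics.

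Third, with gain domination in force I would invoke the standard finite-time stability of the super-twisting algorithm: for each coordinate build a strict Lyapunov function $V=\xi^{\mathrm T}P\xi$ with $\xi=\col(\sig^{1/2}(s_{i,k}),\,\phi_{i,k}+\delta_{i,k})$, establish $\dot V\le-\zeta V^{1/2}$, and apply Lemma~\ref{yinli35} to conclude that $s_i$ and $\phi_i+\delta_i$ reach $\mathbf 0_n$ at some finite $T_{\max}$ and remain there. Then $\dot s_i=-k_{i2}\sig^{1/2}(s_i)+\phi_i+\delta_i=\mathbf 0_n$ for $t\ge T_{\max}$, and since $\dot s_i=u_i^r+\omega_i+\varrho_i(\mathbf{x})$ this gives $\lim_{t\to T_{\max}}\|u_i^r+\omega_i+\varrho_i(\mathbf{x})\|=0$, as claimed.

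The main obstacle is the algebraic loop in the gain: $\dot\delta_i$ depends on $\|v\|$, which is not a priori bounded, while $\eta_i$ only matches $\tfrac1N\sum_j\|v_j\|$ after the finite transient $T_1$. I would resolve it by a two-phase argument — on $[0,T_1)$ no finite escape occurs because the closed loop grows at most linearly ($\nabla_i f_i$ is globally Lipschitz by Assumption~\ref{jiashe22}, $u_i^r$ is sublinear in $s_i$ and linear in $\phi_i$, the $y^i$-subsystem is linear and stable, and $\dot\beta_i,\dot\phi_i$ are bounded on bounded sets), so $s_i(T_1)$, $\phi_i(T_1)$, $v(T_1)$ are finite; then the super-twisting estimate is applied from $T_1$ onward with the now-dominating gain, and $T_{\max}$ equals $T_1$ plus the super-twisting settling time. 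The same linear-growth estimate supplies the uniform bound on $\|\dot v_i\|_\infty$ required for Lemma~\ref{yinli123}, and the scalar super-twisting Lyapunov analysis must be carried out coordinate by coordinate because of the component-wise $\sgn$ and $\sig^{1/2}$.
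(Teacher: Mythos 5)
Your proposal follows essentially the same route as the paper: differentiate $s_i$ to expose the super-twisting structure in the pair $(s_i,\ \phi_i+\omega_i+\varrho_i(\mathbf{x}))$, use the average-tracking protocol (Lemma \ref{yinli123}) so that the gain $k_{i3}+\tilde g N\eta_i$ dominates the derivative of the matched perturbation after a finite transient, and conclude $\dot V\le-\zeta V^{1/2}$ via a quadratic strict Lyapunov function in $\col(\sig^{1/2}(s_{ij}),\varphi_{ij})$ together with Lemmas \ref{yinli33}--\ref{yinli35}. Your explicit two-phase handling of the interval before the tracking consensus is reached (no finite escape, then the settling-time estimate from that instant) is, if anything, slightly more careful than the paper, which applies the decrease estimate from $t=0$.
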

\begin{proof} By taking the time derivative of $s_i$, the compensating part in the reduced system \eqref{000} transforms as
\begin{equation*}
\left\{\begin{aligned}
\dot{s}_{i} &=u_{i}^{r}+\omega_i+\varrho_{i}(\mathbf{x})\\
&=-k_{i2} \sig^{\frac{1}{2}}(s_{i})+\varphi_{i}, \\
\dot{\varphi}_{i} &=-k_{i3} \operatorname{sgn}\left(s_{i}\right)-\tilde{g}N\eta_i\sgn(s_i)+\dot{\omega}_{i}(t)+\sum_{i=1}^N\frac{\partial \varrho_{i}(\mathbf{x})}{\partial x_i}v_i,\\
\dot{\beta}_i&=-\alpha\sum_{j\in \mathscr{N}_i}\sign(\eta_i-\eta_j),\\
\varphi_{i} &=\phi_{i}+\omega_i+\varrho_{i}(\mathbf{x}),\\
\eta_i&=\beta_i+||v_i||.\\
\end{aligned}\right.
\end{equation*}
Define a stack variable $\mathfrak{A}_{ij}=\col(\sig^{\frac{1}{2}}(s_{ij}),\varphi_{ij})$, where $s_i=\col\{s_{i1},s_{i2},\ldots,s_{in}\}$ and $\varphi_i=\col\{\varphi_{i1},\varphi_{i2},\ldots,\varphi_{in}\}$. Choose the candidate Lyapunov function $$V=\sum_{i=1}^N\sum_{j=1}^nV_{ij}=\sum_{i=1}^N\sum_{j=1}^n\mathfrak{A}_{ij}^{\rm T}\mathcal{P}_{ij}\mathfrak{A}_{ij},$$ where $\mathcal{P}_{ij}\in\mathbb{R}^{2\times2}$ is a positive definite matrix for $i\in\mathscr{V}$ and $j\in\{1,2,\ldots,n\}$. Note that with the part $\sig^{\frac{1}{2}}(s_{ij})$, $V$ is absolutely continuous but not locally Lipschitz on the set $\Upsilon=\{(s_{ij},\phi_{ij})\in\mathbb{R}^{2}|s_{ij}=0\}$. Thus, classical Lyapunov theorem does not work, which needs the continuous differentiability or at least locally Lipschitz continuity of $V$. Based on the proof of Zubov theorem in \cite[Theorem 1]{moreno2012strict}, a merely continuous $V$ is suitable for stability analysis. Moreover, it can be easily obtained that $V$ is an absolutely continuous function with respect to $t$. Therefore, its time derivative is defined almost everywhere.

The upper-right Dini derivative of $\mathfrak{A}_{ij}$ is calculated as
\begin{equation*}
\begin{array}{c}
\text{D}^+\mathfrak{A}_{ij}=\frac{1}{2}\left|s_{ij}\right|^{-\frac{1}{2}} \left[\begin{array}{c}
-k_{i2} \operatorname{sig}^{\frac{1}{2}}\left(s_{ij}\right)+\varphi_{ij} \\
\mathfrak{P}_{ij}
\end{array}\right],\\
\end{array}
\end{equation*}
where $\mathfrak{P}_{ij}=-2(k_{i3}+\tilde{g}N\eta_i-(\dot{\omega}_{ij}+\sum_{i=1}^N\frac{\partial \varrho_{i}(\mathbf{x})}{\partial x_{ij}}v_{ij})$ $\times\operatorname{sign}(s_{ij})) \operatorname{sig} ^{\frac{1}{2}}(s_{ij})$.
Taking the derivative of $V$ along the direction of the compensating part, we have
\begin{equation*}
\begin{aligned}
\text{D}^+V=&\sum_{i=1}^{N}\sum_{j=1}^n\left|s_{ij}\right|^{-\frac{1}{2}} \mathfrak{A}_{ij}^{\rm T}\left[\mathcal{R}_{ij}^{\rm T} \mathcal{P}_{ij}+\mathcal{P}_{ij} \mathcal{R}_{ij}\right]\mathfrak{A}_{ij},\\
\end{aligned}
\end{equation*}
where $$\mathcal{R}_{ij}=\left[\begin{array}{cc}
-\frac{1}{2} k_{i 2} & \frac{1}{2} \\
\mathfrak{R}_{ij} & 0
\end{array}\right],$$ in which $\mathfrak{R}_{ij}=-\left[k_{i 3}-\dot{\omega}_{ij} \operatorname{sign}\left(s_{ij}\right)\right]+\sum_{i=1}^N\frac{\partial \varrho_{i}(\mathbf{x})}{\partial x_{ij}}v_{ij}$ $\times\sign(s_{ij})-\tilde{g}N\eta_i$. Based on the boundedness of $\frac{\partial \varrho_i}{\partial x_{ij}}$ and Lemma \ref{yinli123}, we can get that there exists a $T > 0$ such that for all $t>T$, the following formula holds
\begin{equation*}
\begin{aligned}
-2\tilde{g}\sum_{i=1}^N||v_i||\leq&\sum_{i=1}^N\frac{\partial \varrho_{i}(\mathbf{x})}{\partial x_{ij}}v_{ij}\sign(s_{ij})-\tilde{g}N\eta_i\\
\leq&\tilde{g}\sum_{i=1}^N \|v_i\|-\tilde{g}\sum_{i=1}^N||v_i||=0.
\end{aligned}
\end{equation*}What is more, when $k_{i2}>0$, and $k_{i3}>\|\dot{\omega}_i\|_{\infty}$, based on Assumption \ref{jiashe25}, we can get $\mathcal{R}_{ij}$ is Hurwitz.

Since $\mathcal{R}_{ij}$ is Hurwitz, based on Lemma \ref{yinli33} and Theorem 2 in \cite{moreno2012strict}, there exists a unique positive definite matrix $\mathcal{P}_{ij}$ such that $\mathcal{R}_{ij}^{\rm T} \mathcal{P}_{ij}+\mathcal{P}_{ij} \mathcal{R}_{ij}=-\mathcal{Q}_{ij}$ for each positive definite matrix $\mathcal{Q}_{ij}$. Then, we obtain that
\begin{equation*}
\text{D}^+V\leq-\sum_{i=1}^N\sum_{j=1}^n\left|s_{ij}\right|^{-\frac{1}{2}}\mathfrak{A}^{\rm T}_{ij}\mathcal{Q}_{ij}\mathfrak{A}_{ij}\leq0.
\end{equation*}Based on $|s_{ij}|^{\frac{1}{2}}\leq|\mathfrak{A}_{ij}|\leq\lambda_{\min}(\mathcal{P}_{ij})^{\frac{1}{2}}V_{ij}^{\frac{1}{2}}$, one derives
\begin{equation*}
\begin{aligned}
\text{D}^+V&\leq-\sum_{i=1}^N\sum_{j=1}^n\lambda_{\min}(\mathcal{P}_{ij})V_{ij}^{-\frac{1}{2}}\frac{\lambda_{\min}(\mathcal{Q}_{ij})}{\lambda_{\max}(\mathcal{P}_{ij})}V_{ij}\\
&\leq-c_0\sum_{i=1}^N\sum_{j=1}^nV_{ij}^{\frac{1}{2}}\leq-c_0(\sum_{i=1}^N\sum_{j=1}^nV_{ij})^{\frac{1}{2}}=-c_0V^{\frac{1}{2}},
\end{aligned}
\end{equation*}where $c_0=\min\{\frac{\lambda_{\min}(\mathcal{P}_{ij})\lambda_{\min}(\mathcal{Q}_{ij})}{\lambda_{\max}(\mathcal{P}_{ij})}\}$ and the third inequality follows from Lemma \ref{yinli34}. Then, based on the definition of the variable $s_i$, we can get $s_i(0)=\mathbf{0}_n$. Therefore, according to Lemma \ref{yinli35}, $s_{ij}(t)$ is locally convergent to 0 in finite time estimated by $T_{\max}=\frac{2V^\frac{1}{2}(0)}{c_0}$. What is more, it also means that $\lim_{t\rightarrow T_{\max}}\|u_{i}^{r}+\omega_i+\varrho_{i}(\mathbf{x})\|=0$. The influence of disturbances and uncertain dynamics is eliminated in finite time.
\end{proof}
\begin{rem}
From Lemma \ref{yinlili1}, the ISMC controller $u_i^r$ ensures that the NE seeking part in the reduced system \eqref{000} reside on the sliding manifold in finite time with disturbances and uncertain dynamics rejection. On the basis of the finite-time convergence result in Lemma \ref{yinli42}, when $t\geq T_{\max}$, the reduced system \eqref{000} can be reduced into
\begin{equation}\label{tuihua}
 \left\{\begin{aligned}
  \dot{x}_{i}&=v_{i}, \\
\dot{v}_{i}&=-k_{1} \nabla_{i} f_{i}\left(\mathbf{x}\right)-v_i,\\
\dot{s}_i&=0.
\end{aligned}\right.
\end{equation}
\end{rem}

Then, the stability result of the NE seeking part in the reduced system \eqref{000} is given below.
\begin{thm}\label{dinglili1}
Under Assumption \ref{jiashe22}, when $t\geq T_{\max}$, for $0<k_1<\frac{\mu}{\max_{i\in\mathscr{V}}\{l_i\}^2}$, the state of system \eqref{tuihua} is exponentially stable and the state $\mathbf{x}$ will exponentially converge to the NE of the noncooperative game \eqref{na12}.
\end{thm}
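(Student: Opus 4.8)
The plan is to treat the reduced system \eqref{000} as an autonomous differential equation and prove global exponential stability of its unique equilibrium by means of a strict Lyapunov function carrying a position--velocity cross term, using only $\mu$-strong monotonicity and global Lipschitzness of the stacked pseudo-gradient $F$.

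First I would write \eqref{000} in stacked form as $\dot{\mathbf{x}}=v$, $\dot v=-k_1F(\mathbf{x})-v$ with $v=\col(v_1,\ldots,v_N)$. By Assumption \ref{jiashe22} and the Remark following it, the game has the unique NE $x^*$ with $F(x^*)=\mathbf{0}_{Nn}$, so $(x^*,\mathbf{0}_{Nn})$ is the unique equilibrium of \eqref{000}. Introducing the error $\tilde{\mathbf{x}}:=\mathbf{x}-x^*$ and writing $\Delta F:=F(\mathbf{x})-F(x^*)=F(\mathbf{x})$, the error dynamics read $\dot{\tilde{\mathbf{x}}}=v$, $\dot v=-k_1\Delta F-v$, and it suffices to show the origin of this system is globally exponentially stable.

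Next I would take the candidate Lyapunov function
\[
V(\tilde{\mathbf{x}},v)=\tfrac{\vartheta}{2}\,\tilde{\mathbf{x}}^{\rm T}\tilde{\mathbf{x}}+\vartheta\,\tilde{\mathbf{x}}^{\rm T}v+\tfrac12\,v^{\rm T}v,
\]
with a constant $\vartheta\in(0,1)$ to be fixed. Its quadratic-form matrix $\tfrac12\big[\begin{smallmatrix}\vartheta\mathbf{I}_{Nn} & \vartheta\mathbf{I}_{Nn}\\ \vartheta\mathbf{I}_{Nn} & \mathbf{I}_{Nn}\end{smallmatrix}\big]$ is positive definite exactly when $\vartheta\in(0,1)$, so there are $0<\underline{c}\le\overline{c}$ with $\underline{c}(\|\tilde{\mathbf{x}}\|^2+\|v\|^2)\le V\le\overline{c}(\|\tilde{\mathbf{x}}\|^2+\|v\|^2)$. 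Differentiating along the error dynamics, the two $\vartheta\,\tilde{\mathbf{x}}^{\rm T}v$ contributions cancel and
\[
\dot V=-(1-\vartheta)\|v\|^2-k_1\,v^{\rm T}\Delta F-\vartheta k_1\,\tilde{\mathbf{x}}^{\rm T}\Delta F.
\]
Then $\mu$-strong monotonicity gives $\tilde{\mathbf{x}}^{\rm T}\Delta F=(\mathbf{x}-x^*)^{\rm T}(F(\mathbf{x})-F(x^*))\ge\mu\|\tilde{\mathbf{x}}\|^2$, while the componentwise Lipschitz bounds on the $\nabla_if_i$ furnish a global Lipschitz modulus $\bar l$ of $F$, so $\|\Delta F\|\le\bar l\|\tilde{\mathbf{x}}\|$; Cauchy--Schwarz and Young's inequality then give $-k_1v^{\rm T}\Delta F\le\tfrac{k_1\bar l}{2}(\epsilon\|v\|^2+\epsilon^{-1}\|\tilde{\mathbf{x}}\|^2)$ for any $\epsilon>0$. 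Taking $\vartheta=\tfrac12$ and choosing $\epsilon\in(\bar l/\mu,\,1/(k_1\bar l))$ — an interval that is nonempty because the hypothesis $0<k_1<\mu/\max_{i\in\mathscr{V}}\{l_i\}^2$ makes $k_1$ sufficiently small relative to $\mu$ and $\bar l$ — renders both resulting coefficients strictly negative, whence $\dot V\le-c_1\|v\|^2-c_2\|\tilde{\mathbf{x}}\|^2\le-\kappa V$ with $c_1,c_2>0$ and $\kappa:=\min\{c_1,c_2\}/\overline{c}>0$.

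Finally, a comparison argument \cite{hkk2002} gives $V(t)\le V(0)e^{-\kappa t}$, and the two-sided quadratic bound on $V$ yields $\|\tilde{\mathbf{x}}(t)\|+\|v(t)\|\le C e^{-\kappa t/2}(\|\tilde{\mathbf{x}}(0)\|+\|v(0)\|)$; thus the reduced system \eqref{000} is globally exponentially stable, and $\mathbf{x}(t)$ converges exponentially to the NE $x^*$ of \eqref{na12} while $v(t)\to\mathbf{0}_{Nn}$. The step I expect to be the main obstacle is balancing the cross-term weights in $\dot V$: one must track the global Lipschitz modulus of the stacked map $F$ against $\mu$ carefully so that the negative term produced by strong monotonicity dominates the indefinite term $v^{\rm T}\Delta F$, and this is exactly where the smallness hypothesis on $k_1$ is consumed; verifying positive definiteness of $V$ and running the comparison step are routine.
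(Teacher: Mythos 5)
Your proposal is correct and follows essentially the same route as the paper: with $\vartheta=\tfrac12$ your Lyapunov function is exactly one half of the paper's $V_1+V_2=\tfrac12\|\mathbf{x}+v-x^*\|^2+\tfrac12\|v\|^2$, and both arguments combine $\mu$-strong monotonicity with the Lipschitz bound on $F$ to dominate the indefinite cross term under $0<k_1<\mu/\max_{i\in\mathscr{V}}\{l_i\}^2$. The only difference is presentational: you absorb the cross term via Young's inequality with a tunable $\epsilon$, whereas the paper invokes positive definiteness of the $2\times2$ matrix $\mathfrak{B}$ before concluding $\dot V\leq-\kappa V$.
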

\begin{proof} Transforming system \eqref{tuihua} into the stack form, we have
\begin{equation}\label{010}
  \left\{\begin{aligned}
  \dot{\mathbf{x}}&=v, \\
\dot{v}&=-k_{ 1} F(\mathbf{x})-v.
\end{aligned}\right.
\end{equation}
Denote $x^*$ as the NE of the noncooperative game \eqref{na12}. We define $$V=V_1+V_2,$$ where $V_1=\frac{1}{2}(\mathbf{x}+v-x^*)^{\rm T}(\mathbf{x}+v-x^*)$, $V_2=\frac{1}{2}v^{\rm T}v$. Differentiating $V_1$ and $V_2$ along system \eqref{010}, one derives
\begin{equation*}
\begin{aligned}
\dot{V}_1=&(\mathbf{x}+v-x^*)^{\rm T}\big(v-k_1(F(\mathbf{x})-F(x^*))-v\big)\\
=&-k_1(\mathbf{x}-x^*)^{\rm T}(F(\mathbf{x})-F(x^*))-k_1v^{\rm T}(F(\mathbf{x})-F(x^*)),\\
\end{aligned}
\end{equation*}
and
\begin{equation*}
\begin{aligned}
\dot{V}_2=&v^{\rm T}(-k_1(F(\mathbf{x})-F(x^*))-v)\\
=&-\|v\|^2-k_1v^{\rm T}(F(\mathbf{x})-F(x^*)).
\end{aligned}
\end{equation*}
Then, we can obtain
\begin{equation*}
\begin{aligned}
\dot{V}=&-k_1(\mathbf{x}-x^*)^{\rm T}(F(\mathbf{x})-F(x^*))-\|v\|^2\\
&-2k_1v^{\rm T}(F(\mathbf{x})-F(x^*))\\
\leq&-k_1\mu\|\mathbf{x}-x^*\|^2-\|v\|^2+2k_1\max_{i\in\mathscr{V}}\{l_i\}\|v\|\|\mathbf{x}-x^*\|,
\end{aligned}
\end{equation*}
where the last inequality is based on the $\mu$-strong monotonicity of $F$ and $l_i$-Lipschitz continuity of $\nabla_if_i(\mathbf{x})$.
Choosing $0<k_1<\frac{\mu}{\max_{i\in\mathscr{V}}\{l_i\}^2}$, one can get
\begin{equation*}
\dot{V}\leq-\lambda_{\min}(\mathfrak{B})\|\mathfrak{Q}\|^2\leq-\frac{2\lambda_{\min}(\mathfrak{B})}{3}V,
\end{equation*}
where matrix $\mathfrak{B}:=\left[\begin{array}{cc}
k_1\mu & -k_1\max_{i\in\mathscr{V}}\{l_i\} \\
-k_1\max_{i\in\mathscr{V}}\{l_i\} & 1
\end{array}\right]$ is positive definite, and $\mathfrak{Q}:=\col(\mathbf{x}-x^*,v)$.

As a result of the above analysis, we can derive that the state of system \eqref{tuihua} is exponentially stable and the state $\mathbf{x}$ will exponentially converge to the NE of the noncooperative game \eqref{na12} under the second-order dynamics.
\end{proof}

The stability result of the state of the boundary-layer system \eqref{bianjie1} is given by the following theorem.
\begin{thm}\label{dinglili2}
Under Assumption \ref{jiashe1}, when $t\geq T_{\max}$, the state of the boundary-layer system \eqref{bianjie1} is exponentially stable.
\end{thm}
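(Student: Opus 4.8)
The plan is to analyze the boundary-layer system \eqref{bianjie1} in stack form and exhibit a quadratic Lyapunov function whose derivative is negative definite in the error coordinates. First I would stack the estimator variables over $j\in\{1,\ldots,n\}$ and over all players: writing $\mathbf{y}:=\col(y^1,\ldots,y^N)$ and recalling $y^i_i=x_i$, the coupling term in \eqref{bianjie1} is governed by the matrix $M:=L+\mathscr{B}$ acting (via a Kronecker product with $\mathbf{I}_n$) on the estimator block, where $L$ is the Laplacian of $\mathscr{G}$ and $\mathscr{B}=\diag\{a_{ij}\}$ encodes the "pinning" term $a_{ij}(y^i_j-x_j)$ that anchors each player $i$'s self-estimate to the true action $x_j$. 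Since $\mathscr{G}$ is undirected and connected (Assumption \ref{jiashe1}) and $\mathscr{B}$ is a nonzero diagonal PSD matrix, $M$ is symmetric positive definite; this is the structural fact the whole argument rests on.

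Next I would pass to error coordinates $\tilde y^i_j:=y^i_j-x_j$ (for the relevant indices), so that in the $\tau$ time scale $x_j$ is frozen and the dynamics become $\tfrac{\mathrm d}{\mathrm d\tau}\tilde{\mathbf y}=-k_{i4}(M\otimes\mathbf I_n)\tilde{\mathbf y}$ — more precisely, with the per-player gains $k_{i4}$ absorbed into a positive diagonal scaling $K:=\diag\{k_{i4}\}$, the system matrix is $-(KM\otimes\mathbf I_n)$. Since $K\succ0$ and $M\succ0$, $KM$ is similar to the symmetric positive definite matrix $K^{1/2}MK^{1/2}$, hence Hurwitz. I would then take $V_{bl}=\tilde{\mathbf y}^{\rm T}(P\otimes\mathbf I_n)\tilde{\mathbf y}$ with $P$ the positive definite solution of the Lyapunov equation $P(KM)+(KM)^{\rm T}P=-\mathbf I$ guaranteed by Lemma \ref{yinli33} applied to the Hurwitz matrix $-KM$, and conclude $\dot V_{bl}\le-\|\tilde{\mathbf y}\|^2\le-\lambda_{\max}(P)^{-1}V_{bl}$, which gives exponential stability of the boundary-layer equilibrium $\tilde{\mathbf y}=0$, equivalently $\mathbf y_\sigma=\mathbf 1_N\otimes\mathbf x$.

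The main obstacle — really the only subtlety — is bookkeeping the index structure correctly: the variable $y^i_j$ carries both an estimating-player index $i$ and an estimated-player index $j$, the constraint $y^i_i=x_i$ removes some components, and the gains $k_{i4}$ depend on $i$ so the system matrix is not symmetric but only similar to a symmetric one. I expect the cleanest route is to fix $j$ first: for each fixed $j$ the vector $\col(y^1_j,\ldots,y^N_j)$ evolves linearly with system matrix $-K(L+\mathscr{B}_j)$ where $\mathscr{B}_j=\diag\{a_{1j},\ldots,a_{Nj}\}$, and $L+\mathscr{B}_j\succ0$ because $\mathscr{G}$ is connected and $\mathscr{B}_j\neq0$ (at least one neighbor of $j$ exists); then sum the resulting per-$j$ Lyapunov functions. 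Once the positive definiteness of $L+\mathscr{B}_j$ and the similarity-to-symmetric argument for $K(L+\mathscr{B}_j)$ are in place, the exponential estimate is immediate and uniform in $j$, which completes the proof.
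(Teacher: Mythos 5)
Your proposal is correct and takes essentially the same route as the paper: in error coordinates the boundary-layer system is linear with system matrix given by the negated Laplacian-plus-pinning matrix --- your per-$j$ blocks $L+\mathscr{B}_j$ are just a permuted view of the paper's $(L\otimes\mathbf{I}_N+\mathfrak{M})\otimes\mathbf{I}_n$ --- and exponential stability follows from positive definiteness under connectivity, Lemma \ref{yinli33}, and a quadratic Lyapunov function, exactly as in the paper. Your similarity argument handling the heterogeneous gains $k_{i4}$ is if anything slightly more careful than the paper's proof, which writes a single gain $k_4$.
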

\begin{proof} First of all, the boundary-layer system \eqref{bianjie1} can be restated integrally as
$$\frac{\mathrm{d}\boldsymbol{y}}{\mathrm{d} \tau}=-k_4((L\otimes \mathbf{I}_{N}+\mathfrak{M})\otimes\mathbf{I}_{n})\left(\boldsymbol{y}-\left(\mathbf{1}_{N} \otimes \mathbf{x}\right)\right),$$
where $\boldsymbol{y}=\col(y^1,y^2,\ldots,y^N)$ and $\mathfrak{M}=\diag_{i,j\in\mathscr{V}}\{a_{ij}\}$, in which $\diag_{i,j\in\mathscr{V}}\{a_{ij}\}$ denotes a diagonal matrix whose diagonal elements are $a_{11},a_{12},\ldots,a_{1N},a_{21},a_{22},\ldots,a_{2N},$ $\ldots,a_{NN}$, successively.

For convenience, denote estimation error $\boldsymbol{e}=\boldsymbol{y}-\left(\mathbf{1}_{N} \otimes \mathbf{x}\right)$. It should be noted that based on the singular perturbation theory in \cite[Theorem 11.1]{hkk2002}, after making the system decomposition, the variable $\mathbf{x}$ is normally treated as a fixed vector. Thus, the boundary-layer system \eqref{bianjie1} can be transformed as
\begin{equation}\label{wuchaxi}\frac{\mathrm{d}\boldsymbol{e}}{\mathrm{d} \tau}=-k_4((L\otimes \mathbf{I}_{N}+\mathfrak{M})\otimes\mathbf{I}_{n})\boldsymbol{e}.
\end{equation}Due to the fact that $-((L\otimes \mathbf{I}_{N}+\mathfrak{M})\otimes\mathbf{I}_{n})$ is Hurwitz, based on Lemma \ref{yinli33} and Theorem 2 in \cite{moreno2012strict}, there exists a unique positive definite matrix $\boldsymbol{H}$ such that $-((L\otimes \mathbf{I}_{N}+\mathfrak{M})\otimes\mathbf{I}_{n})^{\rm T}\boldsymbol{H}-\boldsymbol{H}((L\otimes \mathbf{I}_{N}+\mathfrak{M})\otimes\mathbf{I}_{n})=-\boldsymbol{Q}$ for positive definite matrix $\boldsymbol{Q}$. Define candidate Lyapunov function
\begin{equation*}
\tilde{V}=\boldsymbol{e}^{\rm T}\boldsymbol{H}\boldsymbol{e}.
\end{equation*}
Taking the derivative of $\tilde{V}$ along the state of system \eqref{wuchaxi}, we can get
\begin{equation*}
\begin{aligned}
\frac{\mathrm{d}\tilde{V}}{\mathrm{d} \tau}=&-k_4(\boldsymbol{e}^{\rm T}((L\otimes \mathbf{I}_{N}+\mathfrak{M})\otimes\mathbf{I}_{n})^{\rm T}\boldsymbol{H}\\
&+\boldsymbol{H}((L\otimes \mathbf{I}_{N}+\mathfrak{M})\otimes\mathbf{I}_{n})\boldsymbol{e})\\
=&-k_4\boldsymbol{e}^{\rm T}\boldsymbol{Q}\boldsymbol{e}\leq-k_4\lambda_{\min}(\boldsymbol{Q})||\boldsymbol{e}||^2\leq-k_4\frac{\lambda_{\min}(\boldsymbol{Q})}{\lambda_{\max}(\boldsymbol{H})}\tilde{V}.
\end{aligned}
\end{equation*}

As a result of the above analysis, we can derive that the state of the boundary-layer system \eqref{bianjie1} is exponentially stable and $||\boldsymbol{e}||\rightarrow0$ exponentially. \end{proof}

Now, we can finally conclude the stability result of the state of the original system based on the following theorem.
\begin{thm}\label{dingliyuan}
Suppose that Assumptions \ref{jiashe1}-\ref{jiashe25} hold. When $t\geq T_{\max}$, if $0<k_1<\frac{\mu}{\max_{i\in\mathscr{V}}\{l_i\}^2}$ and $k_{i3}>\|\dot{\omega}_i\|_{\infty}$ for $i\in\mathscr{V}$, there exists $\varepsilon^*>0$ such that for each $\varepsilon \in\left(0, \varepsilon^*\right]$, the state of Algorithm 1 is exponentially stable and the state $\mathbf{x}$ will exponentially converge to the NE of the noncooperative game \eqref{na12}.
\end{thm}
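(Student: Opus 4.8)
The plan is to assemble the three ingredients already in hand---Lemma \ref{yinlili1} (finite-time rejection), Theorem \ref{dinglili1} (reduced system), Theorem \ref{dinglili2} (boundary-layer system)---via the composite-Lyapunov singular perturbation theorem of \cite[Chapter 11]{hkk2002}. First I would invoke Lemma \ref{yinlili1}: for $t\geq T_{\max}$ one has $u_i^r+\omega_i+\varrho_i(\mathbf{x})\equiv0$, so Algorithm 1 collapses to the interconnection of the slow subsystem \eqref{tuihua} (equivalently \eqref{010} in stack form), driven through $\nabla_i f_i(y^i)$, and the fast subsystem \eqref{guji1}. Introducing the estimation error $\boldsymbol{e}=\boldsymbol{y}-(\mathbf{1}_N\otimes\mathbf{x})$ and the fast time $\tau=t/\varepsilon$ puts the dynamics into the standard singularly perturbed form with slow state $\mathfrak{Q}:=\col(\mathbf{x}-x^*,v)$ and fast state $\boldsymbol{e}$; the quasi-steady state is the isolated root $\boldsymbol{e}=\mathbf{0}$ of the boundary-layer equation \eqref{wuchaxi}, and substituting it recovers exactly the reduced system \eqref{000}.

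Next I would verify the hypotheses and form the composite function $\nu=(1-d)V+d\tilde V$ with $d\in(0,1)$, $V$ the quadratic Lyapunov function from Theorem \ref{dinglili1} (satisfying $\dot V\leq-\tfrac{2}{3}\lambda_{\min}(\mathfrak{B})V$ along the reduced flow) and $\tilde V=\boldsymbol{e}^{\rm T}\boldsymbol{H}\boldsymbol{e}$ from Theorem \ref{dinglili2} (satisfying $\mathrm{d}\tilde V/\mathrm{d}\tau\leq-k_4\tfrac{\lambda_{\min}(\boldsymbol{Q})}{\lambda_{\max}(\boldsymbol{H})}\tilde V$, with the decay rate independent of the frozen slow variable). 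Differentiating $\nu$ along the full dynamics of Algorithm 1 restricted to $t\geq T_{\max}$ produces, besides the two negative definite terms, cross terms from two sources: the mismatch $\nabla_i f_i(y^i)-\nabla_i f_i(\mathbf{x})$ in $\dot v$, which is $O(\|\boldsymbol{e}\|)$ by the global Lipschitz continuity in Assumption \ref{jiashe22}, and the term $-\mathbf{1}_N\otimes v$ in $\dot{\boldsymbol{e}}$ coming from the time-variation of the quasi-steady state. Bounding both with Young's inequality and using $\|v\|\leq\|\mathfrak{Q}\|$ yields $\dot\nu\leq-\mathfrak{Q}$-and-$\boldsymbol{e}$ quadratic form whose matrix is negative definite once $\varepsilon<\varepsilon^*$, where $\varepsilon^*$ is the usual ratio of the product of the slow/fast decay rates (weighted by $d,1-d$ and the quadratic sandwich constants of $V,\tilde V$) over the square of the coupling gain, built from $\lambda_{\min}(\mathfrak{B})$, $k_4$, $\lambda_{\min}(\boldsymbol{Q})/\lambda_{\max}(\boldsymbol{H})$, $\max_{i\in\mathscr{V}}\{l_i\}$ and $\|(L\otimes\mathbf{I}_N+\mathfrak{M})\otimes\mathbf{I}_n\|$. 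Exponential decay of $\nu$, together with $\nu$ being quadratically sandwiched in $\col(\mathfrak{Q},\boldsymbol{e})$, gives exponential stability of the origin, hence $\mathbf{x}\to x^*$, $v\to\mathbf{0}_{Nn}$ and $\boldsymbol{e}\to\mathbf{0}$ exponentially for every $\varepsilon\in(0,\varepsilon^*]$.

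\textbf{Main obstacle.} The delicate part is the bookkeeping of the interconnection terms and confirming that the growth/Lipschitz conditions required to make the composite bound globally valid genuinely hold on $\mathbb{R}^{Nn}$---in particular that the slow drift $-k_1\nabla_i f_i(y^i)-v$ is globally Lipschitz in $\boldsymbol{e}$ uniformly in $\mathfrak{Q}$ (which follows since $F$ is globally Lipschitz under Assumption \ref{jiashe22}) and that the coupling does not spoil the sign-definiteness of the resulting $2\times2$ form. A secondary subtlety is that all of this is asserted only for $t\geq T_{\max}$: I would note that on the finite interval $[0,T_{\max}]$ all states stay bounded (the ISMC terms $u_i^r$ are bounded and $s_i(0)=\mathbf{0}_n$), so resetting the time origin at $T_{\max}$ is harmless and the exponential estimate propagates to all $t\geq0$ with an adjusted constant.
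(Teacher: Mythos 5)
Your proposal is correct and takes essentially the same route as the paper: the paper likewise uses Lemma \ref{yinlili1} to reduce Algorithm 1 for $t\geq T_{\max}$ to the boundary-layer system \eqref{bianjie1} and the reduced system \eqref{000}, invokes Theorems \ref{dinglili1}--\ref{dinglili2}, and then concludes by citing \cite[Theorem 11.4]{hkk2002}. Your composite-Lyapunov construction with the interconnection bounds is simply an explicit unpacking of that cited singular-perturbation theorem, so the two arguments coincide in substance.
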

\begin{proof} According to Lemma \ref{yinlili1}, when $t\geq T_{\max}$, the compensating part rejects the disturbances and uncertain dynamics, allowing the reduced system \eqref{000} to strictly follow the simplified dynamics \eqref{tuihua}. Then, based on Theorems \ref{dinglili1}-\ref{dinglili2}, the states of the boundary-layer system \eqref{bianjie1} and the system \eqref{tuihua} are both exponentially stable. Thus, according to Theorem 11.4 of \cite{hkk2002}, the proof of Theorem \ref{dingliyuan} is completed.\end{proof}

\section{Distributed NE Seeking with Semi-Markov Switching}\label{sec4}
While Algorithm 1 effectively estimates actions, its reliance on continuous communication consumes excessive resources, hindering practical applications such as Unmanned Aerial Vehicle (UAV) coordination under power constraints \cite{kownacki2019adaptation} and vehicle connectivity control under environmental interference \cite{stankovic2011distributed}. To alleviate this, an event-triggered mechanism is essential. Furthermore, real-world communication topologies frequently fluctuate due to uncertain factors like signal shielding and connection interruptions. Consequently, Algorithm 2 considers NE seeking under event-triggered mechanism and semi-Markov switching topologies.
%\begin{table}[!htbp]\label{a2}
%\normalsize
%	\centering
%\renewcommand\arraystretch{1.2}
%	\begin{tabular}{l}
%		\hline
%		\color{black}\textbf{Algorithm 2:}  \color{black}\textbf{NE seeking under semi-Markov}\\
%~\color{black}\textbf{switching and event-triggered mechanism}\\
%		\hline
%		\color{black}\textbf{Initialization:} For $i\in \mathscr{V}$, set $0<k_{1}<\frac{\mu}{\max_{i\in\mathscr{V}}\{l_i\}^2}$,\\
% ~~~~~~~~~~~~~~~~~~~~~~~$k_{i 2}>0$, $k_{i3}>\|\dot{\omega}_i\|_{\infty}$ \\
%		\color{black}\textbf{Dynamics:}  For $i\in\mathscr{V}$,\\
%$\dot{x}_{i}=v_{i}$ \\
%$\dot{v}_{i}=\omega_{i}+u_{i}$\\
%$\dot{\phi}_{i}=-k_{i3} \operatorname{sgn}\left(s_{i}\right) $\\
%$\varepsilon\dot{y}^{i}_j=-k_{ 4}(r(t))\left(\sum_{m=1}^{N} a_{i m}(r(t))\left(y^{i}_j\left(t_{k}^{i}\right)-y^{m}_j\left(t_{k}^{m}\right)\right)\right.$\\
%~~~~~~~~~$+a_{i j}(r(t))\left(y^{i}_j\left(t_{k}^{i}\right)-x_{j}\left(qh\right)\right)\Big)$\\
%		\hline
%	\end{tabular}
%\end{table}	
\begin{table}[!h]
\small
    \centering
\setlength{\abovedisplayskip}{0pt}
\setlength{\belowdisplayskip}{0pt}
\renewcommand\arraystretch{1}
\caption{The operational flow of Algorithm 2\label{a2}}
    \begin{tabular}{p{0.93\columnwidth}}
        \hline
        \textbf{Algorithm 2:} \color{black}\textbf{Distributed NE seeking under semi-Markov switching and event-triggered mechanism}\\
        \hline
        \textbf{Initialization:}\\
        \quad For $i\in \mathscr{V}$, initial states $x_i(0)$, $v_i(0)$, $y^i(0)$.\\
        \quad Set parameters: $0<k_{1}<\frac{\mu}{\max_{i\in\mathscr{V}}\{l_i\}^2}$, $k_{i 2}>0$, \\\quad\quad\quad\quad\quad~~\quad\quad$k_{i3}>\|\dot{\omega}_i\|_{\infty}$, $k_4(r(t))$, $\zeta_i$.\\
        \quad Define sampling period $h \!>\! 0$. Initial index $q \!=\! 0$.\\
        \quad Initial trigger instant $t_0^i = 0$ and trigger counter $k = 0$.\\
        \textbf{Main Loop (Algorithm Execution):}\\
        \quad \textbf{while} $t \ge 0$ \textbf{do}\\
        \quad\quad \textbf{1. Periodic Sampling (At instant $t = qh$):}\\
        \quad\quad\quad Sample state $x_i(qh)$.\\
        \quad\quad\quad Identify the network topology mode $r(qh)$.\\
        \quad\quad \textbf{2. Event-Triggering Evaluation:}\\
        \quad\quad\quad Calculate error: $e_i(qh) = y^i(t_k^i) - y^i(qh)$.\\
        \quad\quad\quad \textbf{if} $e_i(qh)^{\rm T} \Phi e_i(qh) > \zeta_i z_i(qh)^{\rm T} \Phi z_i(qh)$ \textbf{then}\\
        \quad\quad\quad\quad \textbf{Trigger:} Set new trigger instant $t_{k+1}^i = qh$.\\
        \quad\quad\quad\quad \textbf{Broadcast:} Transmit updated estimate $y^i(t_{k+1}^i)$\\
        \quad\quad\quad\quad\quad\quad\quad~~~\ \ to neighbors under topology $r(qh)$.\\
        \quad\quad\quad\quad Update counter: $k \leftarrow k+1$.\\
        \quad\quad\quad \textbf{else}\\
        \quad\quad\quad\quad \textbf{Hold:} Maintain previous broadcast state $y^i(t_k^i)$.\\
        \quad\quad\quad \textbf{end if}\\
        \quad\quad \textbf{3. State Measurement ($t \in [qh, (q\!+\!1)h)$):}\\
        \quad\quad\quad Measure local state $x_i(t)$ and velocity $v_i(t)$.\\
        \quad\quad\quad Compute $u_i^0(t) = -k_1 \nabla_i f_i(y^i(t)) - v_i(t)$.\\
        \quad\quad \textbf{4. Compensator Variable Update:}\\
        \quad\quad\quad $s_i(t) = v_i(t) - v_i(0) - \int_0^t u_i^0(\rho)\text{d}\rho$.\\
        \quad\quad\quad $\dot{\phi}_i(t) = -k_{i3}\operatorname{sgn}(s_i(t))$.\\
        \quad\quad\quad $u_{i}^{r}(t)=-k_{i 2}\operatorname{sig}^{\frac{1}{2}}\left(s_{i}(t)\right)+\phi_{i}(t)$.\\
        \quad\quad \textbf{5. Estimation \& Control Law Computation:}\\
        \quad\quad\quad $u_i(t) = u_i^r(t)+u_i^0(t)$.\\
        \quad\quad\quad Update action estimator:\\
        \quad\quad\quad $\varepsilon\dot{y}_{j}^i(t) = -k_4(r(t))\big(\sum_{m=1}^{N} a_{i m}(r(t))(y^{i}_j(t_k^i) $\\
        \quad\quad\quad\quad\quad\quad\quad $-y^{m}_j(t_k^m))+a_{i j}(r(t))(y^{i}_j(t_k^i)-x_{j}(qh))\big)$.\\
        \quad\quad \textbf{6. State Evolution:}\\
        \quad\quad\quad Apply control input $u_i(t)$ to the physical plant:\\
        \quad\quad\quad $\dot{x}_i(t) = v_i(t)$, \ \ $\dot{v}_i(t) = \omega_i(t) + u_i(t)$.\\
        \quad\quad \textbf{7. Time Increment:}\\
        \quad\quad\quad Wait until next sampling period, then $q \leftarrow q + 1$.\\
        \quad \textbf{end while}\\
        \hline
    \end{tabular}
    \vspace{-0.6cm}
\end{table}

In Algorithm 2, $t\in[t_k^i,t_{k+1}^i)\cap[qh,(q+1)h)$, $q\in\mathbb{Z}_+$, $r(t)$ represents a continuous-time discrete-state semi-Markov process with values in a finite set $\mathbb{S}=\{1,2,\ldots,s\}$, $k_{4}(r(t))$ signifies the control gain, $a_{i m}(r(t))$ denotes the $(i,m)$-th entry of the adjacency matrix $\mathscr{A}(r(t))$.

For the sake of theoretical analysis, in this paper, the detection and implementation of event triggering only occur at a series of sampling instants. That is to say, each player's information is periodically sampled with the sampling time sequence $\{0,h,2h,\ldots\}$, where $h>0$ denotes the sampling period. Moreover, each player transfers its information to neighbors only when the event-triggered condition \eqref{chufa} is violated
\begin{equation}\label{chufa}
\begin{aligned}
t_{k+1}^{i}=&\min _{l \geq 1}\big\{t_{k}^{i}+lh \mid e _ { i } ( t _ { k } ^ { i } + l h  ) ^ { \rm T } \Phi e _ { i }(t_{k}^{i}+lh)\\
&>\zeta_{i}z_{i}\left(t_{k}^{i}+lh\right)^{\rm T}\Phi z_{i}(t_{k}^{i}+lh)\big\},
\end{aligned}
\end{equation}
where $\Phi$ is a positive definite matrix to be designed later, $z_{i j}\left(t_{k}^{i}+lh\right)=\sum_{m=1}^{N} a_{i m}(r(t_{k}^{i}+lh))\left(y^{i}_j\left(t_{k}^{i}\right)-y^{m}_j\left(t_{k}^{m}\right)\right) $ $+ a_{i j}(r(t_{k}^{i}+lh))\left(y^{i}_j\left(t_{k}^{i}\right)-x_{j}\left(t_{k}^{i}+lh\right)\right)$, $e_{i j}(t _ { k } ^ { i } + l h)=y^{i}_j\left(t_{k}^{i}\right)-y^{i}_j(t _ { k } ^ { i } + l h)$, $\delta_{ij}(t _ { k } ^ { i } + l h)=y^{i}_j(t _ { k } ^ { i } + l h)-x_{j}\left(t_{k}^{i}+lh\right)$, $\zeta_{i}>0$, $z_{i}:=\col\left(z_{i 1}, z_{i 2}, \ldots, z_{i N}\right) $, $e_{i}:=\col\left(e_{i1}, e_{i 2}, \ldots, e_{i N}\right)$, $\delta_{i}:=\col\left(\delta_{i1}, \delta_{i 2}, \ldots, \delta_{i N}\right)$.
\begin{rem}
As an extreme situation during the triggering process, Zeno behavior, an infinite number of triggers arises in a finite-time interval, may occur and cause system error if it is not strictly prohibited. To avoid the occurrence of this concern, the sampled-data-based event-triggered mechanism is introduced in this paper. Based on the triggering condition \eqref{chufa}, it can be obtained that $t_{k+1}^i-t_k^i\geq h>0$. Thus, it means that there is a fixed sampling interval greater than 0 contained in any two successive triggering instants, which strictly avoids the occurrence of the Zeno behavior.
\end{rem}

The evolution of the semi-Markov process $r(t)$ is subject to the following transition probability
\begin{small}\begin{equation*}
\operatorname{Pr}\{r(t+\vartheta)=n \mid r(t)=m\}= \begin{cases}\iota_{m n}(\vartheta) \vartheta+o(\vartheta) & m \neq n,\\ 1+\iota_{m n}(\vartheta) \vartheta+o(\vartheta) & m=n,\end{cases}\end{equation*}\end{small}where $\vartheta$ represents the sojourn time, indicating the time interval between the two consecutive successful jumps; $\iota_{m n}(\vartheta)\geq0$ signifies the transition rate from mode $m$ to mode $n$, where $n\neq m$ and $\iota_{m m}(\vartheta)=-\sum_{n=1,n\neq m}^s\iota_{m n}(\vartheta)$; $o(\vartheta)$ is the higher order infinitesimal operator given as $\lim_{\vartheta\rightarrow0}\frac{o(\vartheta)}{\vartheta}=0$.
\begin{rem}
Compared to the NE seeking algorithm under standard Markov switching topologies in \cite{fang2019distributed}, Algorithm 2 overcomes three major challenges: 1) Communication topology: The semi-Markov switching topologies alleviate the memoryless restriction possessed by Markov chains with exponential distributions, allowing for a more general probability distribution of sojourn time (e.g. Weibull). However, the time-varying transition rates invalidate the static algebraic Lyapunov tools utilized in \cite{fang2019distributed}. To overcome this, a novel piecewise stochastic Lyapunov-Krasovskii functional is constructed. 2) Energy efficiency: Unlike the constant communication in \cite{fang2019distributed}, Algorithm 2 incorporates a sampled-data-based event-triggered mechanism to reduce communication burden. The introduction of this mechanism results in a delay differential equation structure of the algorithm, necessitating advanced input-delay modeling and Jensen's inequality to establish the algorithm's mean-square stability. 3) Algorithm execution: Algorithm 2 operates in a continuous-time setting, which avoids the complicated step size design required by the discrete-time algorithm in \cite{fang2019distributed}, thereby decreasing the complexity of parameter tuning. To establish the stability analysis, the weak infinitesimal operator and Dynkin's formula are employed.
\end{rem}
\begin{rem}
Note that in Algorithm 2, we consider the second-order dynamics in \eqref{erjie} in the absence of uncertain dynamics $\varrho_i(\mathbf{x})$, because the following average tracking protocol with periodic sampling and event-triggered mechanism is invalid
\begin{equation*}
\begin{aligned}
 \eta_i&=\beta_i+\|v_i\|,\\
\dot{\beta}_i&=-\alpha\sum_{j=1}^Na_{ij}(r(t))\sign(\eta_i(t_k^i)-\eta_j(t_k^j)).
\end{aligned}
\end{equation*}
Due to the influence of periodic sampling, when the consensus error is small, the above protocol may experience severe fluctuations and cannot achieve average tracking. The strict proof is explicitly given in Section IV-B of \cite{chen2012distributed}.
\end{rem}

To simplify the analysis in the subsequent section, the following assumption is provided.
\begin{assum}\label{jiashe31}
Each probable undirected graph $\tilde{G}_i$,$~i\in\mathbb{S}$, is connected.
\end{assum}
\begin{rem}\label{zhujie33}
Assumption \ref{jiashe31} guarantees the feasibility of the linear matrix inequality \eqref{juzhen1} in Theorem \ref{dinglijunfang}, which has been widely adopted in existing research on semi-Markov (or Markov) switching topologies \cite{zhang2024time,dai2018event,fang2019distributed}. In this paper, this assumption is adopted to maintain a focused presentation on the joint design of semi-Markov switching and event-triggered mechanisms. However, it is worth noting that this condition can actually be relaxed. By exploiting the ergodic property of the semi-Markov chain, the proposed stability results can be naturally extended to accommodate intermittently disconnected topologies, provided that the overall network remains jointly connected in the probabilistic sense.
\end{rem}
\begin{assum}\label{jiashe32}
The set of all possible topologies $\mathbb{S}$ can be partitioned into two subsets: $\mathbb{S}_c$ (the set of connected graphs) and $\mathbb{S}_d$ (the set of disconnected graphs). The semi-Markov process $r(t)$ is ergodic with a unique stationary probability distribution $\pi = [\pi_1, \pi_2, \ldots, \pi_s]$. The overall network is jointly connected in the probabilistic sense.
\end{assum}

Similar to Algorithm 1, we can decompose Algorithm 2 into three parts the same way.
  \begin{itemize}
  \item Second-order NE seeking
  \begin{equation*}
  \left\{\begin{aligned}
  \dot{x}_{i}&=v_{i}, \\
\dot{v}_{i}&=\omega_{i}+u_{i},\\
u_i&=u_i^0+u_i^r,\\
u_{i}^{0}&=-k_{1} \nabla_{i} f_{i}\left(y^{i}\right)-v_i.
\end{aligned}\right.
\end{equation*}
  \item Finite-time compensating mechanism for disturbances
  \begin{equation*}
  \left\{\begin{aligned}
  u_{i}^{r}&=-k_{i 2} \operatorname{sig}^{\frac{1}{2}}\left(s_{i}\right)+\phi_{i},\\
\dot{\phi}_{i}&=-k_{i3} \operatorname{sgn}\left(s_{i}\right),\\
s_{i}&=v_{i}-v_{i}(0)-\int_{0}^{t} u_{i}^{0}(\rho) d \rho.\\
\end{aligned}\right.
\end{equation*}
  \item Action estimation
    \begin{equation}\label{guji2}
  \begin{aligned}
  \varepsilon\dot{y}^{i}_j=&-k_{ 4}(r(t))\left(\sum_{m=1}^{N} a_{i m}(r(t))\left(y^{i}_j\left(t_{k}^{i}\right)-y^{m}_j\left(t_{k}^{m}\right)\right)\right.\\
  &+a_{i j}(r(t))\left(y^{i}_j\left(t_{k}^{i}\right)-x_{j}\left(qh\right)\right)\Big).
\end{aligned}
\end{equation}
\end{itemize}

Since Algorithm 2 has a two-time-scale structure, singular perturbation methods are used to separately analyze different time scales. The boundary-layer system is given as
\begin{small}\begin{equation}\label{bianjie2}
  \begin{aligned}
\frac{\text{d}{y}_{j}^i}{\text{d}\tau}=&-k_{ 4}(r(\varepsilon\tau))\left(\sum_{m=1}^{N} a_{i m}(r(\varepsilon\tau))\left(y^{i}_j\left(\varepsilon\tau_{k}^{i}\right)-y^{m}_j\left(\varepsilon\tau_{k}^{m}\right)\right)\right.\\
  &+a_{i j}(r(\varepsilon\tau))\left(y^{i}_j\left(\varepsilon\tau_{k}^{i}\right)-x_{j}\left(qh\right)\right)\Big),
\end{aligned}
\end{equation}\end{small}where $\tau\in[\tau_k^i,\tau_{k+1}^i)\cap[qh/\varepsilon,(q+1)h/\varepsilon)$, $q\in\mathbb{Z}_+$. In the boundary-layer system \eqref{bianjie2}, $x_j(qh)$ can be viewed as a fixed vector. Substituting the quasi-steady state $\mathbf{y}_{\sigma}:=\col(y^1_{\sigma},y^2_{\sigma},\ldots,y^N_{\sigma})=\mathbf{1}_N\otimes\mathbf{x}$ into \eqref{bianjie2}, we can get the reduced system \begin{equation*}\label{jianhua2}
  \left\{\begin{aligned}
  \dot{x}_{i}&=v_{i}, \\
\dot{v}_{i}&=\omega_{i}+u_i^r-k_{1} \nabla_{i} f_{i}\left(\mathbf{x}\right)-v_i,\\
u_i&=u_i^0+u_i^r,\\
\dot{\phi}_{i}&=-k_{i3} \operatorname{sgn}\left(s_{i}\right).\\
\end{aligned}\right.
\end{equation*}
Note that the convergence of the above reduced system have been theoretically analyzed in Section \ref{sec3} with the absence of uncertain dynamics $\varrho(\mathbf{x})$. Thus, the rest of the discussion mainly concentrates on the boundary-layer system \eqref{bianjie2}.
\begin{rem}
Compared to the boundary-layer system \eqref{bianjie1}, the analysis of the boundary-layer system \eqref{bianjie2} presents two main challenges: 1) The introduction of the semi-Markov process $r(t)$ results in the variations of $y^i_j$ becoming unmeasurable, where the following mean-square consensus in terms of expectation should be considered. 2) The implementation of a sampled-data-based event-triggered mechanism aims to significantly reduce unnecessary communication among players, leading to the discontinuity of the boundary-layer system \eqref{bianjie2}. However, through variable transformation, it can be reformulated as a delay differential equation. Consequently, the subsequent analysis focuses on the proof of the mean-square consensus of the state of this delay differential equation.
\end{rem}
\begin{defn}
Under semi-Markov switching topologies, the mean-square consensus of the state of the boundary-layer system \eqref{bianjie2} is said to be achieved if
\begin{equation*}
\lim_{\tau\rightarrow\infty}\mathbb{E}\|y^i(\tau)-\mathbf{x}\|^2=0,~\forall i\in\mathscr{V},
\end{equation*}
holds for any initial distribution $r_0\in\mathbb{S}$ and any initial condition $\psi(\pi)$, $\forall\pi\in[-\frac{h}{\varepsilon},0]$.
\end{defn}

To facilitate the analysis on the boundary-layer system \eqref{bianjie2}, substituting $e_{ij}$ and $\delta_{ij}$ into \eqref{bianjie2}, we derive
\begin{small}
\begin{equation*}
\begin{aligned}
\frac{\text{d}{y}_{j}^i}{\text{d}\tau}=&-k_{ 4}(r(\varepsilon\tau))\left[\sum _ { m = 1 } ^ { N } a _ { i m } ( r (\varepsilon\tau) ) \left[e_{i j}(q h)-e_{m j}(q h)\right.\right.\\
&+\delta_{i j}(q h)\left.-\delta_{m j}(q h)\right]+a_{i j}\left(r(\varepsilon\tau)\right)\left(e_{i j}(q h)+\delta_{i j}(q h)\right)\bigg] \\
=&-k_4(r(\varepsilon\tau))\left(\sum_{m=1}^{N} l _{i m}(r(\varepsilon\tau))\left(e_{i j}(q h)+\delta_{i j}(q h)\right)\right.\\
&+a_{i j}(r(\varepsilon\tau))\left(e_{i j}(q h)+\delta_{i j}(q h)\right)\Bigg),
\end{aligned}
\end{equation*}
\end{small}where $l _{i m}(r(\varepsilon\tau))$ denotes the $(i,m)$-th entry of the Laplacian matrix $L(r(\varepsilon\tau))$. Note that, in the $\tau$ time scale, $\mathbf{x}$ can be treated as a fixed vector. Thus, for $r(\varepsilon\tau)=m,~m\in\mathbb{S}$, we can get
\begin{equation}
\begin{aligned}
\frac{\text{d}{\delta}}{\text{d}\tau}=&-\left[\left(L(m)\otimes \mathbf{I}_N+A_0(m)\right)\otimes\mathbf{I}_n \otimes K(m)\right]\\
&\times[e(\tau-\varsigma(\tau))+\delta(\tau-\varsigma(\tau))],\label{dde1}
\end{aligned}
\end{equation}where $\delta=\col(\delta_1,\delta_2,\ldots,\delta_N)$, $e=\col(e_1,e_2,\ldots,e_N)$, $A_0(m)=\diag_{i,j\in\mathcal{V}}\{a_{ij}(m)\}$, $K(m)=k_4(m)$, $\tau-\varsigma(\tau)=\frac{qh}{\varepsilon}$. For ease of discussion, we omit the $\varepsilon$ in front of $\tau$. It is easily obtained that $\varsigma(\tau)\in[0,\frac{h}{\varepsilon})$, which is a piecewise-linear function with $\dot{\varsigma}(\tau)=1$ for $\tau\neq \frac{qh}{\varepsilon}$. Then, for the delay differential equation \eqref{dde1}, the initial condition for $\delta(\tau)$ is supplemented as $\delta(\pi)=\psi(\pi)$, $\pi\in[-\frac{h}{\varepsilon},0]$ with $\psi(\pi)\in\mathcal{C}([-\frac{h}{\varepsilon},0], \mathbb{R}^{N^2n}),$ where $\mathcal{C}([-\frac{h}{\varepsilon},0], \mathbb{R}^{N^2n})$ represents the continuous functions mapping from $[-\frac{h}{\varepsilon},0]$ into $\mathbb{R}^{N^2n}$ with the norm $\|\psi\|_{\mathcal{C}}=\sup_{c\in[-\frac{h}{\varepsilon},0]}\|\psi(c)\|.$

Next, we can conclude the discussion on boundary-layer system \eqref{bianjie2} in the following theorem.
\begin{thm}\label{dinglili3}
Under Assumption \ref{jiashe31}, the state of the boundary-layer system \eqref{bianjie2} with event-triggered mechanism \eqref{chufa} is mean-square stable, i.e, the state of the system \eqref{bianjie2} achieves consensus in the sense of mean-square, if there exist positive definite matrices $\mathds{Q}$, $\mathds{U}$, $\mathds{R}$ and $\mathds{P}(m)$, and matrix $\mathds{S}$, for $m\in\mathbb{S}$, such that the following matrix inequalities hold:
\begin{align}
&\left[\begin{array}{cc}
\Xi_1(m) & \frac{h}{\varepsilon} \mathds{B}^{\rm T}(m) \\
* & -\left(\mathbf{I}_{Nn} \otimes \mathds{R}\right)^{-1}
\end{array}\right]\prec0, ~m\in\mathbb{S},\label{juzhen1}\\
&\left[\begin{array}{cc}
\mathbf{I}_{Nn} \otimes \mathds{R} & \mathds{S} \\
\mathds{S}^{\rm T} & \mathbf{I}_{Nn} \otimes \mathds{R}
\end{array}\right]\succ0,\label{juzhen2}
\end{align}
where $\mathcal{H}(m)=(L(m)\otimes \mathbf{I}_N+A_0(m))\otimes\mathbf{I}_{n}$, $\Lambda=\diag_{i\in\mathcal{V}}\{\zeta_i\}$, $\text{\uppercase\expandafter{\romannumeral2}}_i = e_i^{\rm T} \otimes \mathbf{I}_{N^2n}$ $(i=1,2,3,4)$ with $e_i \in \mathbb{R}^4$ being the $i$-th vector of the canonical basis, $\text{\uppercase\expandafter{\romannumeral2}}_{jk} = \text{\uppercase\expandafter{\romannumeral2}}_j - \text{\uppercase\expandafter{\romannumeral2}}_k$, $\mathbb{B}(m)=-(\mathcal{H}(m) \otimes K(m))( \text{\uppercase\expandafter{\romannumeral2}}_2+\text{\uppercase\expandafter{\romannumeral2}}_4)$, and \begin{small}\begin{align*}
\Xi_1(m)=&\sum_{a=1}^s \lambda_{m n}(\vartheta) \text{\uppercase\expandafter{\romannumeral2}}_1^{\rm T}\left(\mathbf{I}_{Nn} \otimes \mathds{P}(a)\right) \text{\uppercase\expandafter{\romannumeral2}}_1-\text{\uppercase\expandafter{\romannumeral2}}_3^{\rm T}\left(\mathbf{I}_{Nn} \otimes \mathds{Q}\right) \text{\uppercase\expandafter{\romannumeral2}}_3\\
&+\text{\uppercase\expandafter{\romannumeral2}}_1^{\rm T}\left[\mathbf{I}_{Nn} \otimes(\mathds{Q+U})\right] \text{\uppercase\expandafter{\romannumeral2}}_1-\text{\uppercase\expandafter{\romannumeral2}}_4^{\rm T}\left(\mathbf{I}_{N}\otimes \Phi\right) \text{\uppercase\expandafter{\romannumeral2}}_4\\
& +\text{\uppercase\expandafter{\romannumeral2}}_1^{\rm T}\left(\mathbf{I}_{Nn} \otimes \mathds{P}(m)\right) \mathds{B}(m)+\mathds{B}(m)^{\rm T}\left(\mathbf{I}_{Nn} \otimes \mathds{P}(m)\right) \text{\uppercase\expandafter{\romannumeral2}}_1\\
&-\left[\begin{array}{c}
\text{\uppercase\expandafter{\romannumeral2}}_{23} \\
\text{\uppercase\expandafter{\romannumeral2}}_{12}
\end{array}\right]^{\rm T}\left[\begin{array}{cc}
\mathbf{I}_{Nn} \otimes \mathds{R} & \mathds{S} \\
\mathds{S}^{\rm T} & \mathbf{I}_{Nn} \otimes \mathds{R}
\end{array}\right]\left[\begin{array}{c}
\text{\uppercase\expandafter{\romannumeral2}}_{23} \\
\text{\uppercase\expandafter{\romannumeral2}}_{12}
\end{array}\right]\\
&+\left(\text{\uppercase\expandafter{\romannumeral2}}_2+\text{\uppercase\expandafter{\romannumeral2}}_4\right)^{\rm T}\left(\mathcal{H}(m)^{\rm T} (\Lambda \otimes \Phi) \mathcal{H}(m)\right)\left(\text{\uppercase\expandafter{\romannumeral2}}_2+\text{\uppercase\expandafter{\romannumeral2}}_4\right).
\end{align*}\end{small}
\end{thm}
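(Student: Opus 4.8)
The plan is to turn the boundary-layer system \eqref{bianjie2}, after substituting the held sampling errors, into the mode-dependent delay differential equation \eqref{dde1}, and then to run a stochastic Lyapunov--Krasovskii argument. Introduce the augmented vector $\xi(\tau)=\col\big(\delta(\tau),\,\delta(\tau-\varsigma(\tau)),\,\delta(\tau-\tfrac{h}{\varepsilon}),\,e(qh)\big)$, whose four blocks are exactly those extracted by $\text{\uppercase\expandafter{\romannumeral2}}_1,\ldots,\text{\uppercase\expandafter{\romannumeral2}}_4$, so that \eqref{dde1} reads $\dot{\delta}(\tau)=\mathds{B}(m)\xi(\tau)$ with $m=r(\varepsilon\tau)$. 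The strategy is: (i) build a mode-dependent functional $\mathcal{V}$; (ii) compute its weak infinitesimal operator $\mathcal{L}\mathcal{V}$ along \eqref{dde1} via Dynkin's formula for the semi-Markov process; (iii) absorb the memory term by the reciprocally-convex Wirtinger bound of Lemma \ref{yinli32}; (iv) absorb the held error $e(qh)$ by the triggering rule \eqref{chufa}; and (v) read off negativity of $\mathcal{L}\mathcal{V}$, hence mean-square consensus, from the LMIs \eqref{juzhen1}--\eqref{juzhen2} via a Schur complement.

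Concretely, I would take
\[
\begin{aligned}
\mathcal{V}(\tau)={}&\delta^{\rm T}(\mathbf{I}_{Nn}\otimes\mathds{P}(m))\delta+\int_{\tau-\frac{h}{\varepsilon}}^{\tau}\delta^{\rm T}(s)(\mathbf{I}_{Nn}\otimes\mathds{Q})\delta(s)\,\mathrm{d}s\\
&+\int_{\tau-\varsigma(\tau)}^{\tau}\delta^{\rm T}(s)(\mathbf{I}_{Nn}\otimes\mathds{U})\delta(s)\,\mathrm{d}s+\frac{h}{\varepsilon}\int_{-\frac{h}{\varepsilon}}^{0}\int_{\tau+\theta}^{\tau}\dot{\delta}^{\rm T}(s)(\mathbf{I}_{Nn}\otimes\mathds{R})\dot{\delta}(s)\,\mathrm{d}s\,\mathrm{d}\theta,
\end{aligned}
\]
which is positive definite under the hypotheses and continuous along trajectories; moreover, since $\dot{\varsigma}\equiv1$ between samples and $\varsigma$ resets to $0$ at each sampling instant, the $\mathds{U}$-integral has a nonpositive jump there, so Dynkin's formula can be invoked block-by-block. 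Differentiating term by term: the mode-dependent quadratic produces the cross term $\text{\uppercase\expandafter{\romannumeral2}}_1^{\rm T}(\mathbf{I}_{Nn}\otimes\mathds{P}(m))\mathds{B}(m)+\mathds{B}(m)^{\rm T}(\mathbf{I}_{Nn}\otimes\mathds{P}(m))\text{\uppercase\expandafter{\romannumeral2}}_1$ together with the mode-jump term $\sum_{a\in\mathbb{S}}\lambda_{ma}(\vartheta)\,\text{\uppercase\expandafter{\romannumeral2}}_1^{\rm T}(\mathbf{I}_{Nn}\otimes\mathds{P}(a))\text{\uppercase\expandafter{\romannumeral2}}_1$ coming from the semi-Markov transition rates; the $\mathds{Q}$- and $\mathds{U}$-integrals give $\text{\uppercase\expandafter{\romannumeral2}}_1^{\rm T}[\mathbf{I}_{Nn}\otimes(\mathds{Q}+\mathds{U})]\text{\uppercase\expandafter{\romannumeral2}}_1-\text{\uppercase\expandafter{\romannumeral2}}_3^{\rm T}(\mathbf{I}_{Nn}\otimes\mathds{Q})\text{\uppercase\expandafter{\romannumeral2}}_3$; and the double integral yields $\tfrac{h^{2}}{\varepsilon^{2}}\mathds{B}(m)^{\rm T}(\mathbf{I}_{Nn}\otimes\mathds{R})\mathds{B}(m)-\tfrac{h}{\varepsilon}\int_{\tau-h/\varepsilon}^{\tau}\dot{\delta}^{\rm T}(\mathbf{I}_{Nn}\otimes\mathds{R})\dot{\delta}\,\mathrm{d}s$.

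It remains to dominate the last integral and to close the loop with the triggering rule. Splitting $\int_{\tau-h/\varepsilon}^{\tau}$ at $\tau-\varsigma(\tau)$, applying Jensen's inequality on each piece, and then invoking Lemma \ref{yinli32} with weight $\mathbf{I}_{Nn}\otimes\mathds{R}$ and slack $\mathds{S}$ --- admissible exactly because of \eqref{juzhen2} --- bounds $-\tfrac{h}{\varepsilon}\int_{\tau-h/\varepsilon}^{\tau}\dot{\delta}^{\rm T}(\mathbf{I}_{Nn}\otimes\mathds{R})\dot{\delta}\,\mathrm{d}s$ above by exactly the reciprocally-convex term of $\Xi_1(m)$ built from $\text{\uppercase\expandafter{\romannumeral2}}_{23},\text{\uppercase\expandafter{\romannumeral2}}_{12}$. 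For the triggering part, writing $y^{i}_{j}(t_k^i)=x_j+\delta_{ij}(qh)+e_{ij}(qh)$ makes the $x_j$-terms cancel pointwise inside $z_{ij}(qh)$, so $z(qh)=\mathcal{H}(m)(\text{\uppercase\expandafter{\romannumeral2}}_2+\text{\uppercase\expandafter{\romannumeral2}}_4)\xi$; hence, on each inter-event interval, \eqref{chufa} gives $e(qh)^{\rm T}(\mathbf{I}_{N}\otimes\Phi)e(qh)\le z(qh)^{\rm T}(\Lambda\otimes\Phi)z(qh)$, and adding this nonnegative slack to $\mathcal{L}\mathcal{V}$ supplies the remaining two blocks $-\text{\uppercase\expandafter{\romannumeral2}}_4^{\rm T}(\mathbf{I}_{N}\otimes\Phi)\text{\uppercase\expandafter{\romannumeral2}}_4+(\text{\uppercase\expandafter{\romannumeral2}}_2+\text{\uppercase\expandafter{\romannumeral2}}_4)^{\rm T}\big(\mathcal{H}(m)^{\rm T}(\Lambda\otimes\Phi)\mathcal{H}(m)\big)(\text{\uppercase\expandafter{\romannumeral2}}_2+\text{\uppercase\expandafter{\romannumeral2}}_4)$ of $\Xi_1(m)$. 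Summing all contributions gives $\mathcal{L}\mathcal{V}\le\xi^{\rm T}\big(\Xi_1(m)+\tfrac{h^{2}}{\varepsilon^{2}}\mathds{B}(m)^{\rm T}(\mathbf{I}_{Nn}\otimes\mathds{R})\mathds{B}(m)\big)\xi$, and a Schur complement on the $\mathds{R}$-term shows the bracket is negative definite if and only if \eqref{juzhen1} holds; therefore $\mathcal{L}\mathcal{V}\le-\varpi\|\delta(\tau)\|^{2}$ for some $\varpi>0$, and the generalized Dynkin / stochastic Lyapunov--Krasovskii stability theorem for semi-Markov jump delay systems yields $\lim_{\tau\to\infty}\mathbb{E}\|\delta(\tau)\|^{2}=0$, i.e.\ $\mathbb{E}\|y^{i}(\tau)-\mathbf{x}\|^{2}\to0$ for every $i\in\mathscr{V}$, which is precisely the asserted mean-square consensus (Assumption \ref{jiashe31} being what makes each $-\mathcal{H}(m)$ Hurwitz and the LMIs feasible).

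The step I expect to be the main obstacle is the treatment of the semi-Markov generator: because the rates $\lambda_{ma}(\vartheta)=\iota_{ma}(\vartheta)$ depend on the elapsed sojourn time, there is no constant infinitesimal generator as in the Markov case, so one must argue that the jump contribution is uniformly dominated by $\sum_{a}\lambda_{ma}(\vartheta)\,\text{\uppercase\expandafter{\romannumeral2}}_1^{\rm T}(\mathbf{I}_{Nn}\otimes\mathds{P}(a))\text{\uppercase\expandafter{\romannumeral2}}_1$ --- either through bounds on $\iota_{ma}(\cdot)$ over the admissible sojourn range or by averaging against the sojourn-time density --- and that the resulting family of LMIs \eqref{juzhen1}, affine in these rates, can be enforced (in practice at the vertices of the rate polytope). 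A secondary but delicate point is the bookkeeping that keeps the sampled-data reformulation \eqref{dde1}, the nonpositive jump of $\mathcal{V}$ at sampling instants, and the reciprocally-convex split mutually consistent in the augmented variable $\xi$, together with checking that \eqref{chufa}, stated in the $t$-scale, transfers correctly to every sample inside an inter-event interval in the $\tau$-scale.
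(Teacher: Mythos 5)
Your proposal follows essentially the same route as the paper's own proof: the identical four-term Lyapunov--Krasovskii functional, the weak infinitesimal operator with the semi-Markov jump term, the Jensen plus reciprocally-convex bound of Lemma \ref{yinli32} under \eqref{juzhen2}, absorption of the held error via the triggering rule \eqref{chufa}, a Schur complement to reach \eqref{juzhen1}, and the piecewise Dynkin argument exploiting the nonpositive jump of the $\mathds{U}$-integral at sampling instants. The argument is correct and matches the paper's Appendix A in both structure and detail.
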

\begin{proof} See Appendix \ref{fulu1}. \end{proof}
\begin{rem}
Based on Theorem \ref{dinglili3}, the position estimations on non-neighboring players converge to the actual positions. Compared with the results in \cite[Proposition 1]{zhang2017improved}, fewer free slack matrix variables are introduced, which reduces the computational costs of matrix inequalities to a great extent.
\end{rem}
\begin{thm}\label{dinglijunfang}
Suppose that Assumptions \ref{jiashe22}, \ref{jiashe25}, \ref{jiashe31} and the matrix inequalities \eqref{juzhen1} and \eqref{juzhen2} hold. For Algorithm 2, when $t\geq T_{\max}$, if $0<k_1<\frac{\mu}{\max_{i\in\mathscr{V}}\{l_i\}^2}$ and $k_{i3}>\|\dot{\omega}_i\|_{\infty}$ for $i\in\mathscr{V}$, there exists $\varepsilon^*>0$ such that for each $\varepsilon \in\left(0, \varepsilon^*\right]$,  the measurement errors $\mathfrak{Q}(t)$ and $\delta(t)$ are mean-square stable.
\end{thm}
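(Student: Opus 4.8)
The plan is to follow the three-step singular-perturbation program used for Theorem \ref{dingliyuan}, upgraded to the mean-square setting forced by the semi-Markov switching. \emph{Step 1 (finite-time reduction).} First I would rerun the proof of Lemma \ref{yinlili1} with $\varrho\equiv\mathbf 0$, so that the average-tracking terms drop out: writing $\mathfrak A_{ij}=\col(\sig^{1/2}(s_{ij}),\varphi_{ij})$ with $\varphi_i=\phi_i+\omega_i$, the relevant matrix $\mathcal R_{ij}$ is the one from that proof after deleting the $\tilde g N\eta_i$ and $\frac{\partial\varrho_i}{\partial x_{ij}}v_{ij}$ contributions, hence Hurwitz as soon as $k_{i2}>0$ and $k_{i3}>\|\dot\omega_i\|_\infty$; Lemma \ref{yinli33} then furnishes a quadratic $V=\sum_{i,j}\mathfrak A_{ij}^{\rm T}\mathcal P_{ij}\mathfrak A_{ij}$ with $\mathrm D^+V\le-c_0V^{1/2}$, and Lemma \ref{yinli35} together with $s_i(0)=\mathbf 0_n$ gives $s_i\equiv\mathbf 0_n$, $\dot s_i\equiv\mathbf 0_n$ on $[T_{\max},\infty)$. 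Thus $u_i^r+\omega_i\equiv0$ for $t\ge T_{\max}$, and Algorithm 2 collapses to the interconnection of $\dot x_i=v_i$, $\dot v_i=-k_1\nabla_if_i(y^i)-v_i$ with the estimator \eqref{guji2}.

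\emph{Step 2 (the two limiting subsystems).} Splitting this interconnection by the time-scale separation $\tau=t/\varepsilon$: the reduced system is exactly \eqref{000}, i.e.\ the $\varrho$-free system already handled in Theorem \ref{dinglili1}, so for $0<k_1<\mu/\max_{i\in\mathscr V}\{l_i\}^2$ the function $V=V_1+V_2$ gives $\dot V\le-\tfrac{2\lambda_{\min}(\mathfrak B)}{3}V$ and $\mathfrak Q=\col(\mathbf x-x^*,v)$ decays exponentially; and the boundary-layer system is \eqref{bianjie2}, whose mean-square stability --- equivalently $\mathbb E\|\delta(\tau)\|^2\to0$ (and $\mathbb E\|e(\tau)\|^2\to0$) --- is precisely Theorem \ref{dinglili3} under \eqref{juzhen1}--\eqref{juzhen2}, certified there by a Lyapunov--Krasovskii functional $\mathcal W$ whose weak infinitesimal generator is negative definite along \eqref{dde1}.

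\emph{Step 3 (mean-square interconnection).} The coupling between the two pieces is quantitative: in the slow dynamics the actual input is $\col(\nabla_if_i(y^i))$ instead of $F(\mathbf x)$, and Assumption \ref{jiashe22} bounds the mismatch by $\max_{i\in\mathscr V}\{l_i\}\,\|\boldsymbol e\|$ with $\boldsymbol e=\boldsymbol y-(\mathbf 1_N\otimes\mathbf x)$; conversely, writing $\varepsilon\dot{\boldsymbol e}=\varepsilon\dot{\boldsymbol y}-\varepsilon(\mathbf 1_N\otimes v)$ shows that in the $\tau$-scale the $\boldsymbol e$-equation is exactly the mean-square-stable dynamics of Step 2 perturbed by $-\varepsilon(\mathbf 1_N\otimes v)$, a coupling term carrying the small factor $\varepsilon$. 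I would then form the composite object $\Theta(t)=V(\mathfrak Q(t))+\kappa\,\mathbb E[\mathcal W(t)]$, differentiate it in the stochastic (weak) sense, dominate every cross term by Young's inequality, and select first $\kappa>0$ and then $\varepsilon^*>0$ so that $\tfrac{d}{dt}\mathbb E[\Theta]\le-c\,\mathbb E[\Theta]$ for all $\varepsilon\in(0,\varepsilon^*]$; this delivers $\mathbb E\|\mathfrak Q(t)\|^2\to0$ and $\mathbb E\|\delta(t)\|^2\to0$, which is the claim.

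The main obstacle is Step 3. Unlike Theorem \ref{dingliyuan}, the classical singular-perturbation theorem (Theorem 11.4 of \cite{hkk2002}) cannot be invoked off the shelf, since the fast subsystem is a stochastic semi-Markov delay system with event-triggered jumps, so the composite argument must be carried through by hand in expectation. Three points need care: the delay $\varsigma(\tau)\in[0,h/\varepsilon)$ in \eqref{dde1} grows as $\varepsilon\to0$, which has to be absorbed by recognizing that it only reflects the fixed sampling window $h$ on the original time axis; the jump instants of $e$ must be handled through the triggering inequality \eqref{chufa}, exactly as in the proof of Theorem \ref{dinglili3}; and the mode-dependent block $\mathds P(m)$ of $\mathcal W$ produces the semi-Markov generator term $\sum_{a}\lambda_{mn}(\vartheta)(\cdot)$, which is controlled only via the linear matrix inequality \eqref{juzhen1}. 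Assembling these while keeping the dependence on $\varepsilon$ explicit is the crux of the argument.
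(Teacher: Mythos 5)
Your Steps 1 and 2 coincide with the paper's argument: the finite-time ISMC reduction (Lemma \ref{yinlili1} rerun with $\varrho\equiv\mathbf 0$), followed by the identification of the reduced system \eqref{000} (handled by Theorem \ref{dinglili1}) and the boundary-layer system \eqref{bianjie2} (handled by Theorem \ref{dinglili3}) as the two limiting subsystems. The divergence is in Step 3. The paper does not construct a composite Lyapunov functional by hand: it verifies the hypotheses (assumptions (23)--(27) and conditions (33)--(34)) of an existing stochastic singular-perturbation result in \cite{seroka2013mean}, using precisely the Lyapunov function of Theorem \ref{dinglili1} and the Lyapunov--Krasovskii functional of Theorem \ref{dinglili3}, and then concludes mean-square stability of $\mathfrak{Q}$ and $\delta$ ``as in Corollary 2 of \cite{seroka2013mean}.'' Your observation that Theorem 11.4 of \cite{hkk2002} cannot be invoked off the shelf is exactly what motivates that citation; the interconnection analysis you propose to do explicitly is the part the paper delegates to the reference.

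As written, however, your Step 3 has a concrete gap beyond what you flag. The proof of Theorem \ref{dinglili3} only delivers $\mathfrak{F}V<-\epsilon\,\delta^{\rm T}(\tau)\delta(\tau)$, i.e.\ a negative bound in $\|\delta\|^2$, not in the full Lyapunov--Krasovskii functional, whose integral terms run over windows of length $h/\varepsilon$; hence the inequality $\frac{d}{dt}\mathbb E[\Theta]\le-c\,\mathbb E[\Theta]$ does not follow by adding the two functionals and applying Young's inequality, unless you first show that the fast functional is dominated, uniformly in $\varepsilon$, by quantities its generator actually controls (or you retreat to the integral/summation argument of the Appendix, which gives convergence but not an exponential composite bound). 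Second, the coupling $-\varepsilon(\mathbf 1_N\otimes v)$ is pointwise small in the $\tau$-scale, but it acts over sampling/delay windows of length $h/\varepsilon$, so its accumulated effect on $\delta$ between sampling instants is of order $h\sup\|v\|$, not of order $\varepsilon$; the frozen-$\mathbf x$ idealization behind \eqref{dde1} is therefore not perturbed in the naive $O(\varepsilon)$ fashion, and absorbing this requires precisely the kind of quantitative interconnection hypotheses that \cite{seroka2013mean} packages. Until these two points are carried out, your Step 3 is a plausible program rather than a proof, whereas the paper, at the cost of being less self-contained, closes the argument by verifying and citing the stochastic singular-perturbation result.
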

\begin{proof} According to Lemma \ref{yinlili1}, when $t\geq T_{\max}$, Algorithm 2 can be divided as the boundary-layer system \eqref{bianjie2} and the reduced system \eqref{000}. Then, based on Theorems \ref{dinglili1} and \ref{dinglili3}, the state of the boundary-layer system \eqref{bianjie2} is mean-square stable and the state of the reduced system \eqref{000} is exponentially stable. Based on the Lyapunov function settings in Theorems \ref{dinglili1} and \ref{dinglili3}, assumptions (23)-(27) and conditions (33)-(34) in \cite{seroka2013mean} can be verified. Then, the proof of Theorem \ref{dinglijunfang} follows a similar argument as in Corollary 2 of \cite{seroka2013mean}.\end{proof}
\begin{rem}
Currently, the effectiveness of Theorems \ref{dingliyuan} and \ref{dinglijunfang} relies strictly on the strong monotonicity of the pseudo-gradient and the convexity of the players' cost functions. These conditions are essential for ensuring the exponential convergence of the reduced system, which in turn guarantees the effectiveness of the singular perturbation technique utilized in this paper. Nevertheless, functional non-convexity is ubiquitous in practice, making the extension to non-convex games a challenging yet highly practical future direction. By integrating mirror descent methods \cite{chen2024approaching}, the proposed algorithm could potentially be extended to seek a global NE for non-convex games in a canonical form. Additionally, leveraging collaborative neurodynamic approaches \cite{xia2023collaborative} may ensure players to escape local NEs and converge to the global NE in more general non-convex games. Addressing these non-convex games while maintaining the inherent robustness of the algorithms under complex environments constitutes an important part of our future research.
\end{rem}
\begin{corollary}\label{tuilun111}
Under Assumptions \ref{jiashe22}, \ref{jiashe25}, \ref{jiashe32} and the matrix inequality \eqref{juzhen2}, suppose there exist scalars $\imath_m > 0$ (for $m \in \mathbb{S}_c$) and $\jmath_m > 0$ (for $m \in \mathbb{S}_d$) such that
\begin{align}
\Psi(m) &\preceq -\imath_m \mathbf{I}, \quad \forall m \in \mathbb{S}_c, \label{eq_stable_mode} \\
\Psi(m) &\preceq \jmath_m \mathbf{I}, \quad \forall m \in \mathbb{S}_d, \label{eq_unstable_mode}
\end{align}
where $\Psi(m)=\left[\begin{array}{cc}
\Xi_1(m) & \frac{h}{\varepsilon} \mathds{B}^{\rm T}(m) \\
* & -\left(\mathbf{I}_{Nn} \otimes \mathds{R}\right)^{-1}
\end{array}\right].$ Then, for Algorithm 2, when $t\geq T_{\max}$, if $0<k_1<\frac{\mu}{\max_{i\in\mathscr{V}}\{l_i\}^2}$ and $k_{i3}>\|\dot{\omega}_i\|_{\infty}$ for $i\in\mathscr{V}$, there exists $\varepsilon^*>0$ such that for each $\varepsilon \in\left(0, \varepsilon^*\right]$,  the measurement errors $\mathfrak{Q}(t)$ and $\delta(t)$ are mean-square stable.
\end{corollary}
\begin{proof}
See Appendix \ref{fulu2}.
\end{proof}
\section{Simulation and Numerical Examples}\label{sec5}
\subsection{Connectivity Control Game Statement}
Inspired by \cite{stankovic2011distributed},\begin{figure}[!htpb]
\centerline{\includegraphics[width=0.55\linewidth]{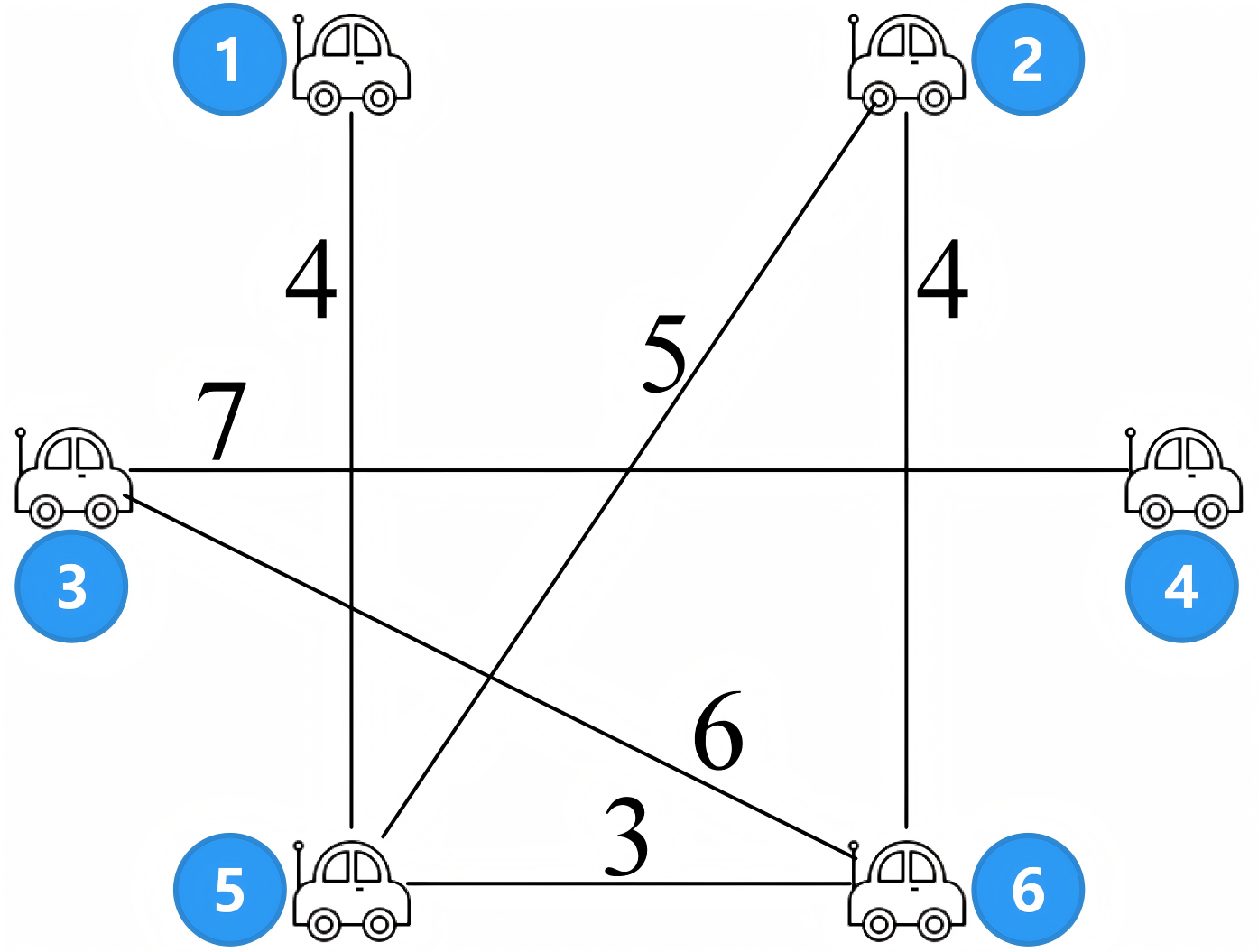}}
\caption{Communication topology of the vehicles.}\label{tutu1}
\label{f12}
\end{figure}\begin{figure}[!htbp]% 一个图片
\centering{
\begin{minipage}{0.8\linewidth}
\includegraphics[width=\linewidth]{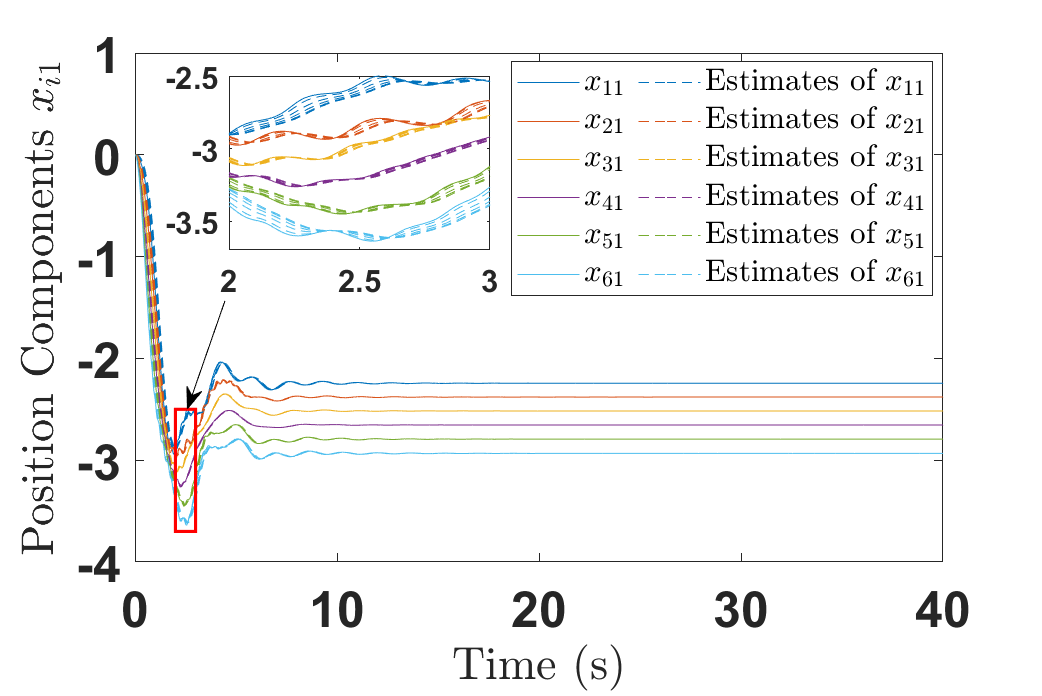}
		\label{f13}%文中引用该图片代号
\end{minipage}}
\centering{
\begin{minipage}{0.8\linewidth}
\includegraphics[width=\linewidth]{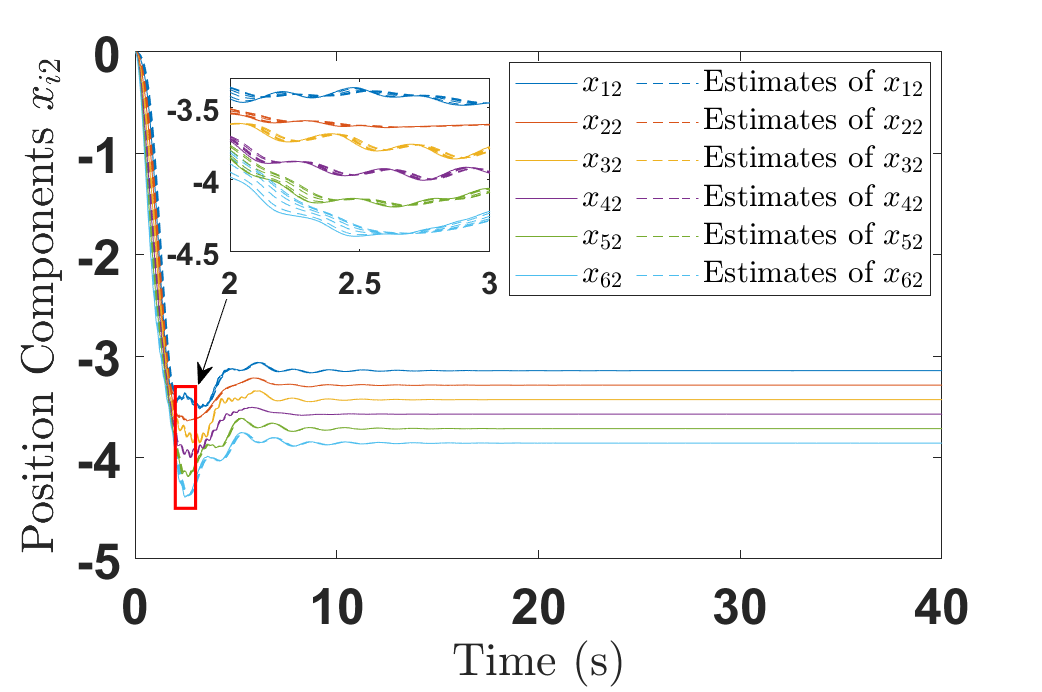}
		\label{f14}%文中引用该图片代号
\end{minipage}}
	\caption{$x_{i1}(t)$, $x_{i2}(t)$ for the position trajectories produced by Algorithm 1.}\label{tutu2}
	\label{ff}
\end{figure}\begin{figure}[!htpb]
\centerline{\includegraphics[width=0.8\linewidth]{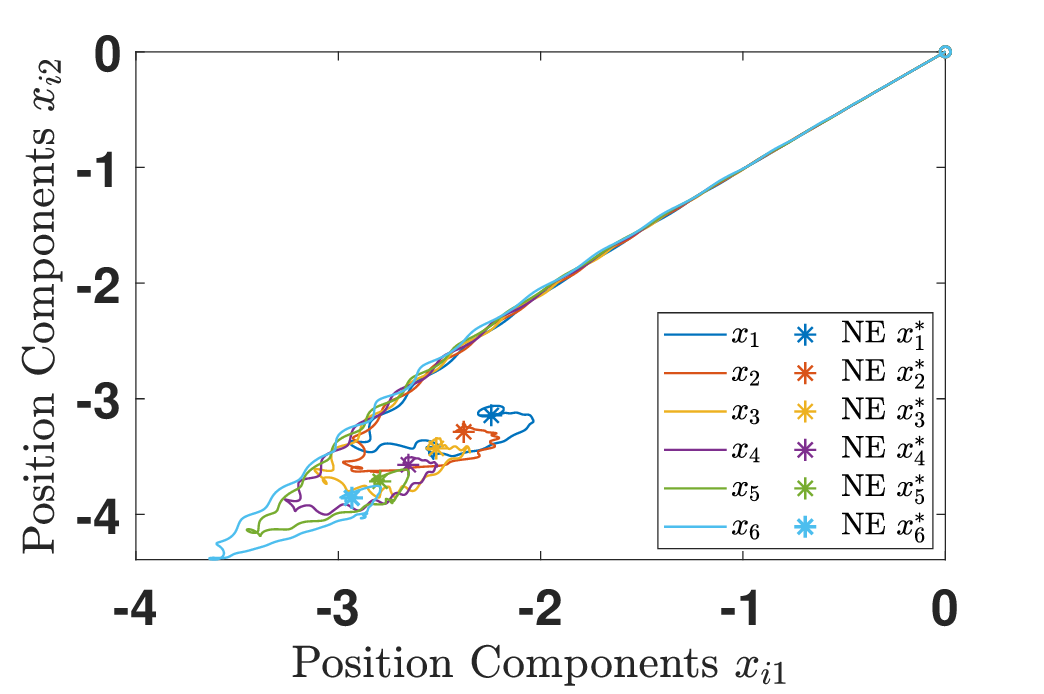}}
\caption{All the vehicles' trajectories in 2D plane.}\label{tutu3}
\label{f12}
\vspace{-0.5cm}
\end{figure} in this subsection, we formulate a connectivity control problem of mobile sensor vehicles to verify the effectiveness of Algorithms 1 and 2. Specifically, a group $\mathfrak{N}=\{1,2,\ldots,6\}$ of mobile sensor vehicles is employed to coordinate their positions through wireless communication to fulfill complex tasks such as reconnaissance or exploration. 
Due to the inherent physical inertia of mobile sensor vehicles, unlike idealized first-order kinematic systems that assume instantaneous velocity changes, actual vehicles are driven by acceleration (i.e., force or torque) commands \cite{yang1999sliding}. Consequently, modeling these vehicles as second-order dynamic systems is physically indispensable for accurately capturing their real-world mechanical behaviors and ensuring the practical realizability of the proposed controllers. Therefore, the second-order system \eqref{erjie} is utilized to model the movement of the vehicles, where $x_i:=\col(x_{i1},x_{i2})$ denotes vehicle $i$'s planar position and $v_i:=\col(v_{i1},v_{i2})$ represents its velocity.

In the simulation, each vehicle $i$ attempts to autonomously find the target location to optimize its private objective $C_i(x_i)$, while maintaining relative positions with other vehicles. Thus, vehicle $i$'s ultimate goal can be formulated as follows
\begin{equation}\label{li1}\begin{array}{l}
\mbox{min  }~~  J_i(x_i,x_{-i})=C_i(x_i)+\sum_{j\in\mathfrak{N}}\|x_i-x_j\|^2,
\end{array}\end{equation}
where $C_i(x_i)=\|x_i\|^2+c_i^{\rm T}x_i+\sin(x_{i1})$, with a randomly generated private parameter vector $c_i\in\mathbb{R}^2$. The private objective $C_i(x_i)$ represents the specific task penalty for vehicle $i$, which comprises two parts. The quadratic portion, $\|x_i\|^2+c_i^{\rm T}x_i$, evaluates the distance penalty between the vehicle's actual position and its nominal target, where $c_i$ encodes the target's spatial coordinates. Furthermore, the trigonometric term $\sin(x_{i1})$ reflects periodic terrain variations (e.g., undulating surfaces or sand dunes) encountered during the exploration.

Based on analytical calculation, we obtain $\mu=2$ and $l_i=22$ for $i\in\mathfrak{N}$. By setting $\col\{c_1,c_2,\ldots,c_6\}$ as $\col\{1,2,\ldots,12\}$, the NE of the connectivity control game \eqref{li1} is calculated as $\col\{x_1^*,x_2^*,\ldots,x_6^*\}=\col\{-2.245,-3.14,-2.38,-3.28,-2.51,-3.42,-2.65,$ $-3.56,-2.8,-3.71,-2.95,-3.85\}$.
\subsection{Experimental Results of Algorithm 1}
Influenced by wireless network signal fluctuation and signal loss, each vehicle's control input $u_i$ may inevitably encounter external disturbances and uncertain dynamics.
\begin{figure}[!htpb]
\centerline{\includegraphics[width=0.8\linewidth]{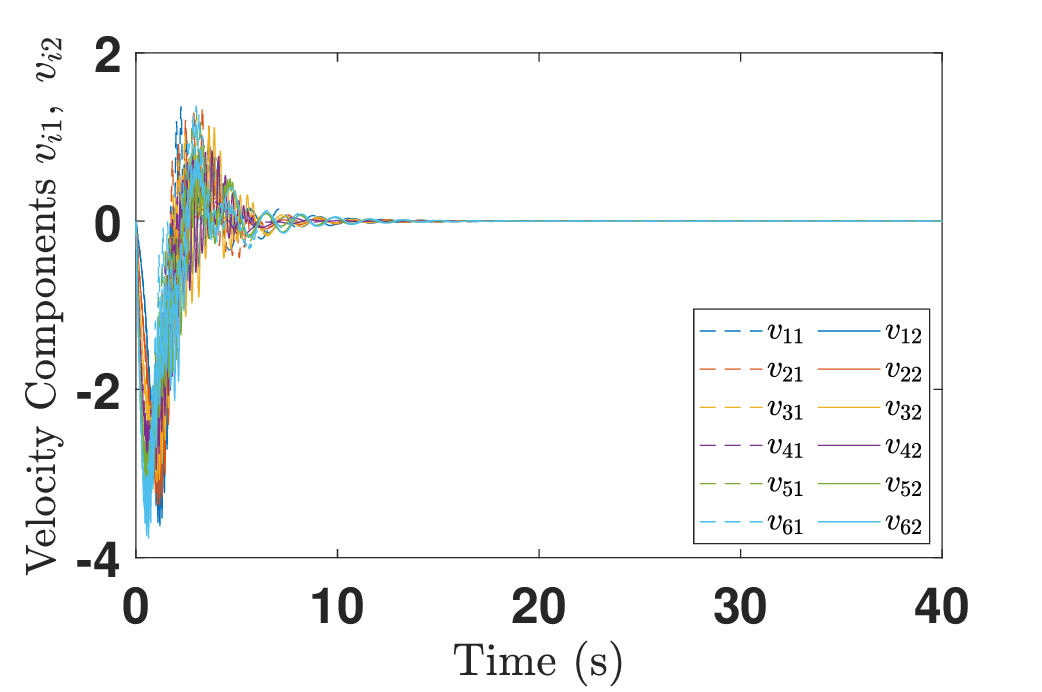}}
\caption{$v_{i1}(t)$, $v_{i2}(t)$ for the velocity trajectories produced by Algorithm 1.}\label{tutu4}
\label{f12}
\vspace{-0.5cm}
\end{figure}\begin{figure*}[!h]%一个图片
\centering{
\begin{minipage}{0.3\linewidth}
\includegraphics[width=0.9\linewidth]{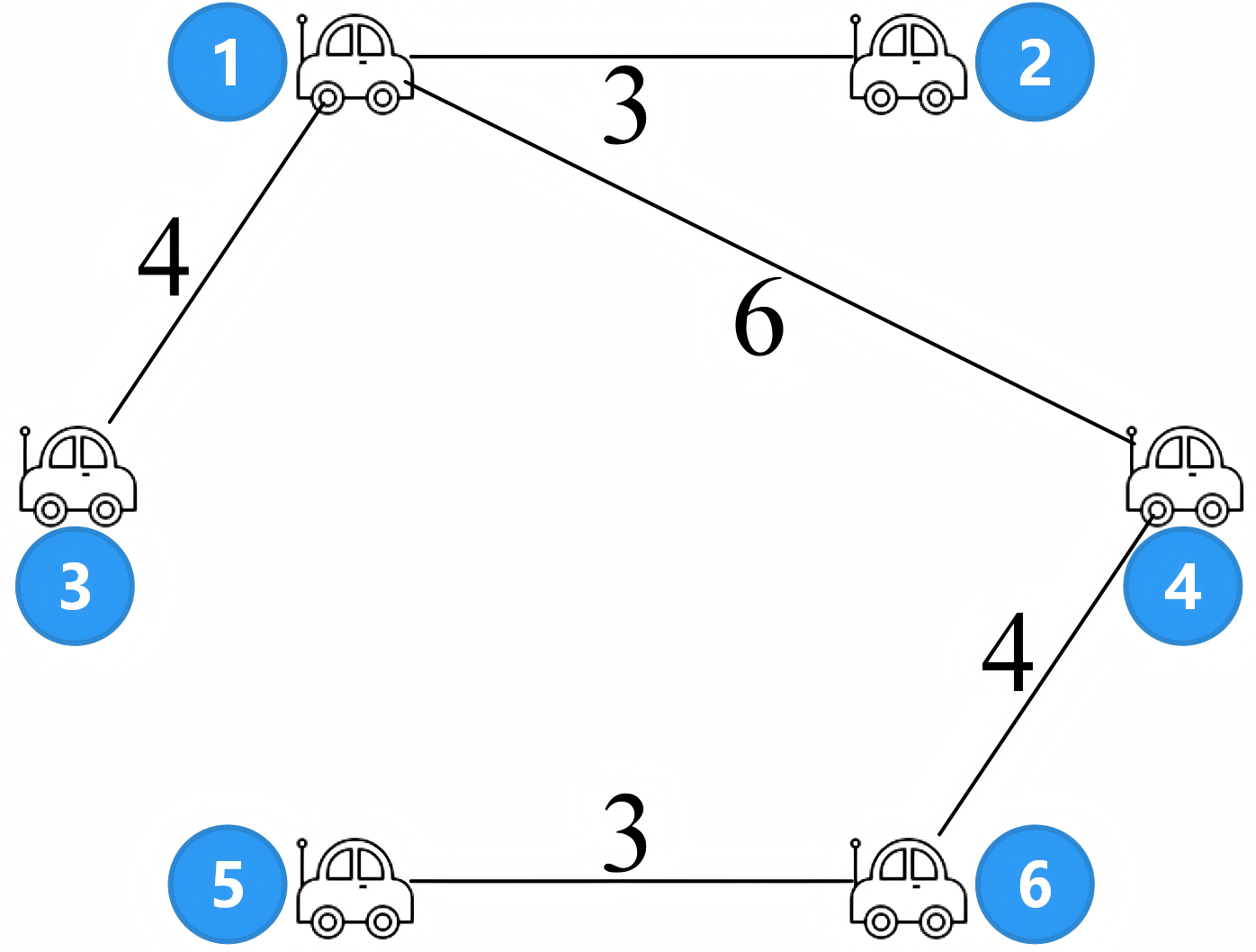}

\end{minipage}}
\centering{
\begin{minipage}{0.3\linewidth}
\includegraphics[width=0.9\linewidth]{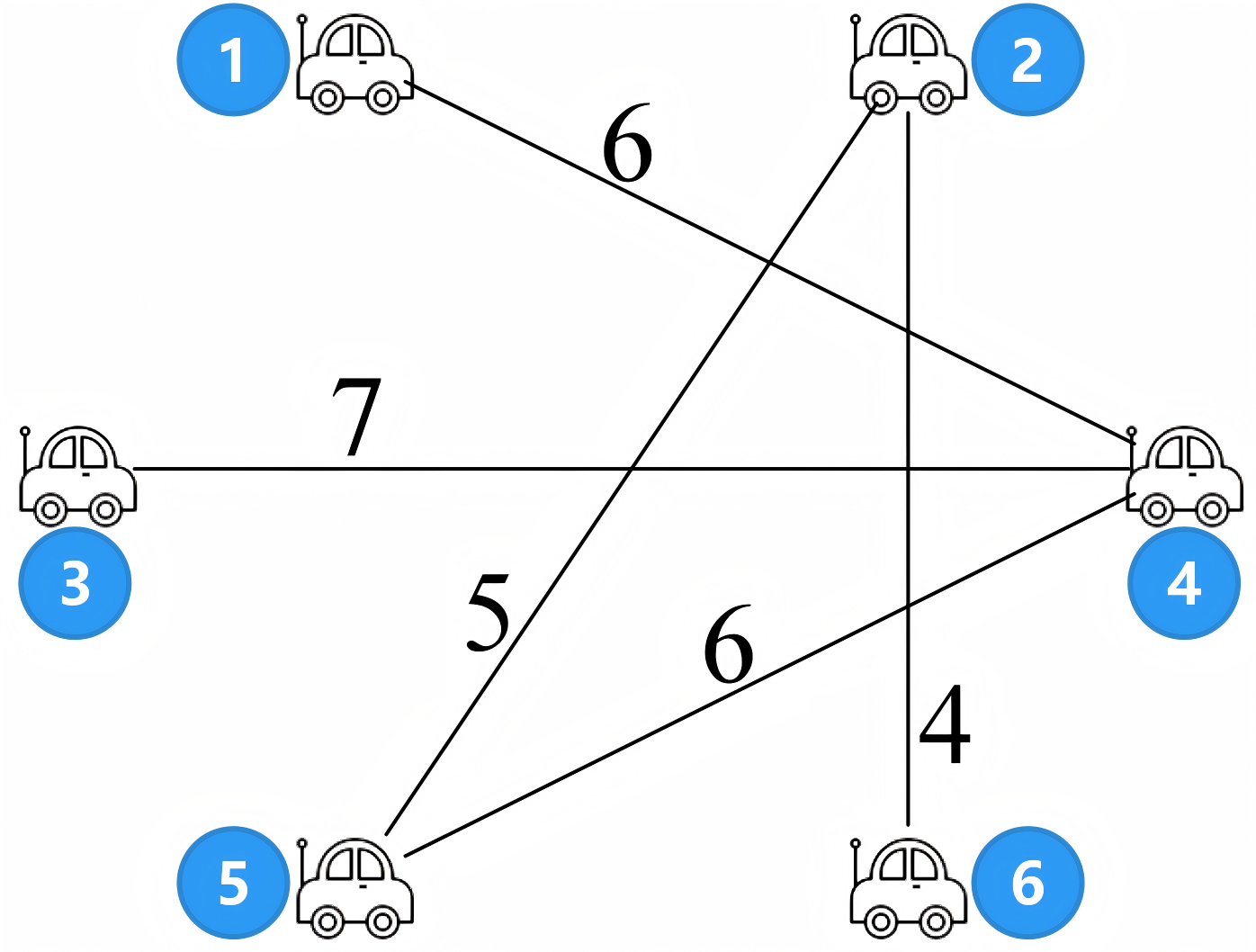}

\end{minipage}}
\centering{
\begin{minipage}{0.3\linewidth}
\includegraphics[width=0.9\linewidth]{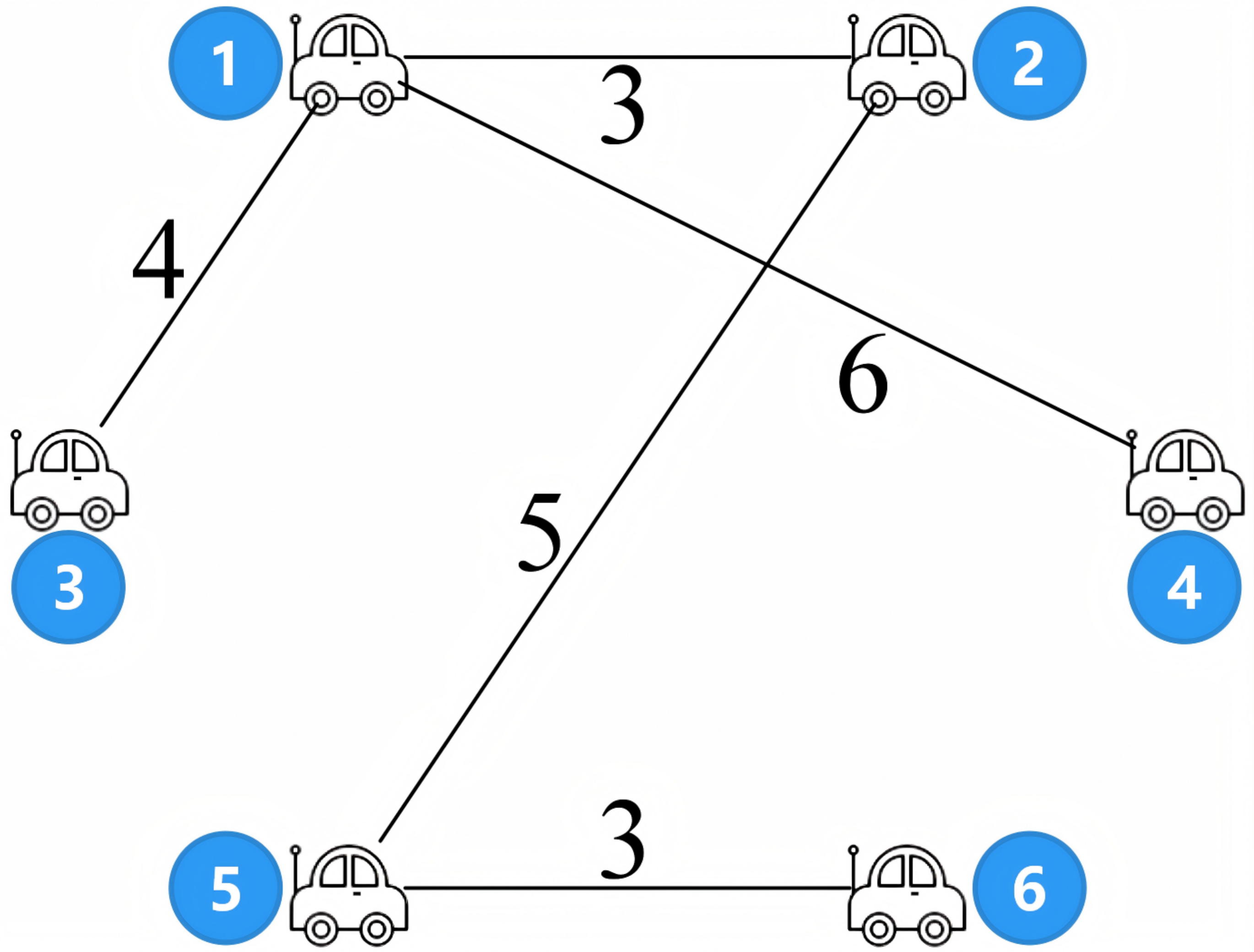}
	%文中引用该图片代号
\end{minipage}}
	\caption{Three possible switching topologies.}\label{tutu5}
\end{figure*} These external factors can degrade the performances of some NE seeking algorithms. To verify the interference rejection ability of Algorithm 1, the subsequent simulation is conducted based on the settings below.
\begin{itemize}
\item All the vehicles communicate with each other through an undirected connected graph shown in Fig. \ref{tutu1}.
  \item The disturbance for vehicle $i$ is set as $\omega_i(t):=\col(\sin(it),\cos(it))$.
  \item As discussed in \cite[Section II]{yao2015rise}, during the movement of vehicles, the uncertain friction effects can be lumped into the unmodeled term $\varrho_i(\mathbf{x})$. Due to the difficulty in quantitatively estimating the friction, in this connectivity control game, we use linear functions to represent the uncertain dynamics defined as follows: for vehicles $i\in\{1,2,\ldots,5\}$, let $\varrho_i(\mathbf{x}):=\col(ix_{i1}+ix_{i2}+(i+1)x_{i+1,1}+(i+1)x_{i+1,2},~3ix_{i1}+2ix_{i2}+3(i+1)x_{i+1,1}+2(i+1)x_{i+1,2})$, and the one for vehicle 6 is $\varrho_6(\mathbf{x}):=\col(6x_{61}+6x_{62}+x_{11}+x_{12},~18x_{61}+12x_{62}+3x_{11}+2x_{12})$.
  \item Some parameters in Algorithm 1 is designed: $k_{1}=0.001$, $k_{i3}=5$, $k_{i2}=k_{i4}=1$ for $i\in\mathfrak{N}$, $\tilde{g}=20$, $\alpha=20$, $\varepsilon=0.01$.
\end{itemize}

It is obvious that Assumptions \ref{jiashe1}-\ref{jiashe25} are satisfied. The simulation results produced by Algorithm 1 are given in Figs. \ref{tutu2}-\ref{tutu4}. %\begin{figure}[!htbp]% 一个图片
As depicted in Fig. \ref{tutu2}, despite the initial fluctuations induced by external disturbances and uncertain dynamics, all vehicles' positions successfully converge to the NE of the noncooperative game \eqref{li1}. Simultaneously, all the other vehicles' position estimations on one vehicle converge to its actual positions. Furthermore, the transient behavior in Fig. \ref{tutu2}, specifically, the complete elimination of disturbance-induced ripples after $t \approx 15$s, provides that the proposed ISMC controller $u_i^r$ effectively rejects all disturbances in finite time, thereby corroborating the analysis in Lemma \ref{yinlili1}. To provide a more intuitive physical perspective, Fig. \ref{tutu3} illustrates the planar spatial trajectories, demonstrating that all vehicles navigate from the origin to their respective target NE points (marked with $\ast$). Finally, Fig. \ref{tutu4} explicitly validates that all vehicles' velocities converge to zero as their positions reach the NE. These observations verify the convergence results established in Theorem \ref{dingliyuan}.
%\subsection{Comparison with State-of-the-Art}
%To facilitate a comparison, the connectivity control game \eqref{li1} is also addressed through the algorithm proposed by \cite{ye2023distributed}. The corresponding simulation results are presented in Figs. \ref{tutu2111} and \ref{tutu4111}.
% 
%As shown in Fig. \ref{tutu4111}, the trajectories of the vehicles' velocities display continuous fluctuations. It means that the compensating mechanism in \cite{ye2023distributed} can only eliminate the effects of external disturbances and uncertain dynamics until the NE seeking is achieved. Furthermore, Fig \ref{tutu2111} indicates that the estimates of other players' positions take much longer time to converge to the actual positions, even with relatively large parameter values, $k_3=50$ and $k_4=100$. 
%
%These results demonstrate the efficiency and effectiveness of Algorithm 1 in terms of interferences rejection and rapid estimation.
\subsection{Experimental Results of Algorithm 2}\label{suan2li}
In practical situations, the communication topology among vehicles may change over time, owing to the unexpected phenomena, such as link failure, signal changes and repairs of components.\begin{figure}[!htpb]
\centerline{\includegraphics[width=0.8\linewidth]{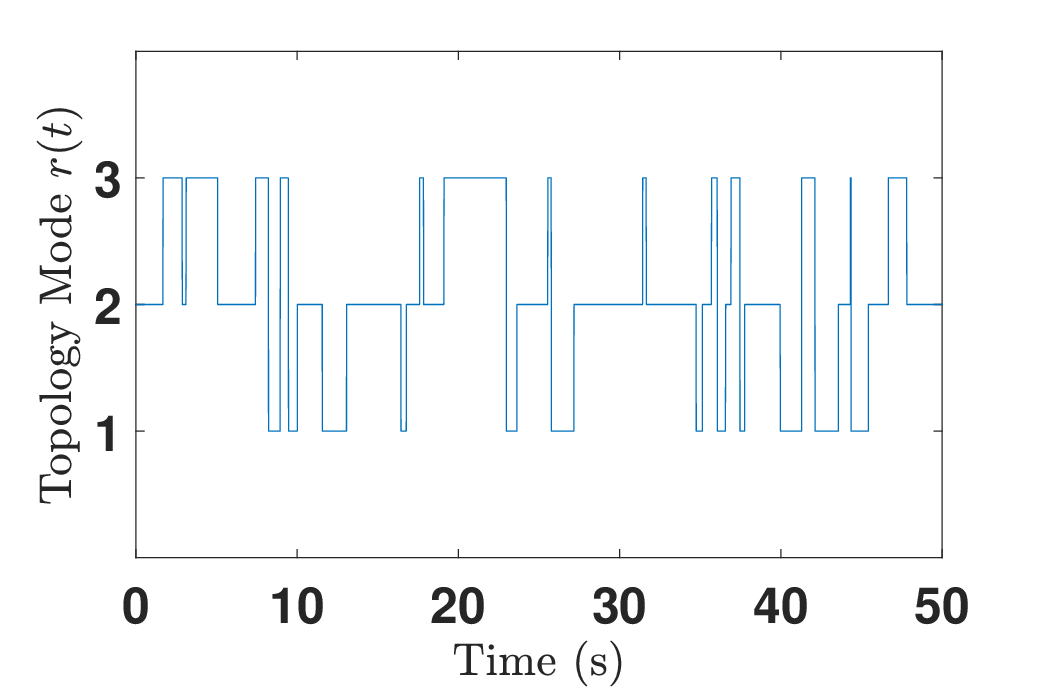}}
\caption{Switching signal with three modes.}\label{tutu6}
\vspace{-0.5cm}
\end{figure}\begin{figure}[!h]%一个图片
\centering{
\begin{minipage}{0.8\linewidth}
\includegraphics[width=\linewidth]{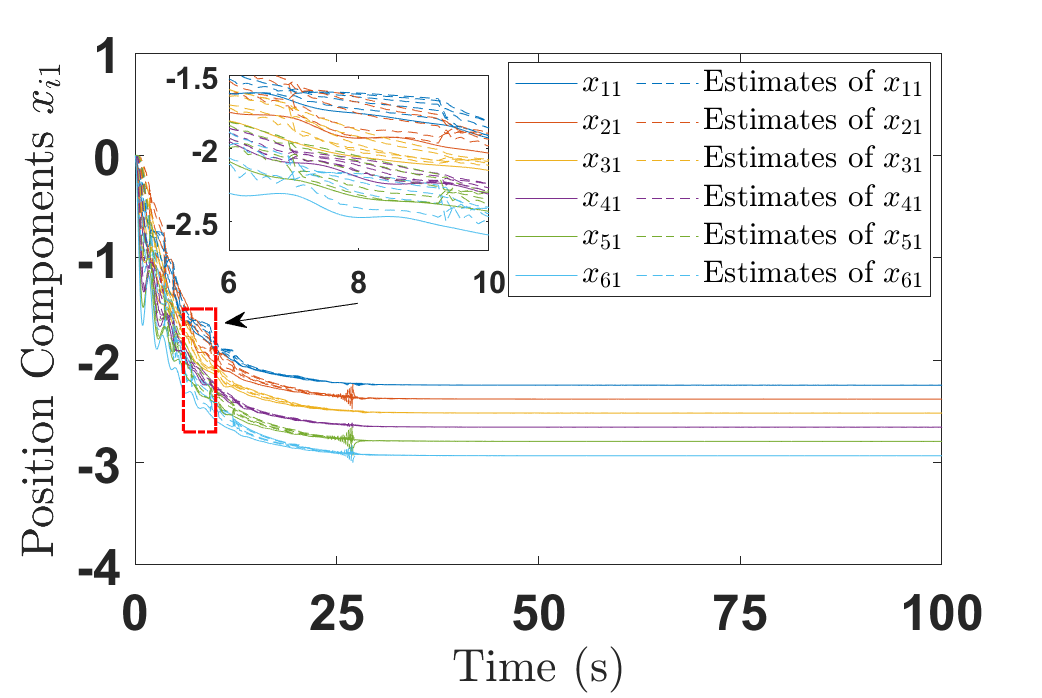}
%文中引用该图片代号
\end{minipage}}
\centering{
\begin{minipage}{0.8\linewidth}
\includegraphics[width=\linewidth]{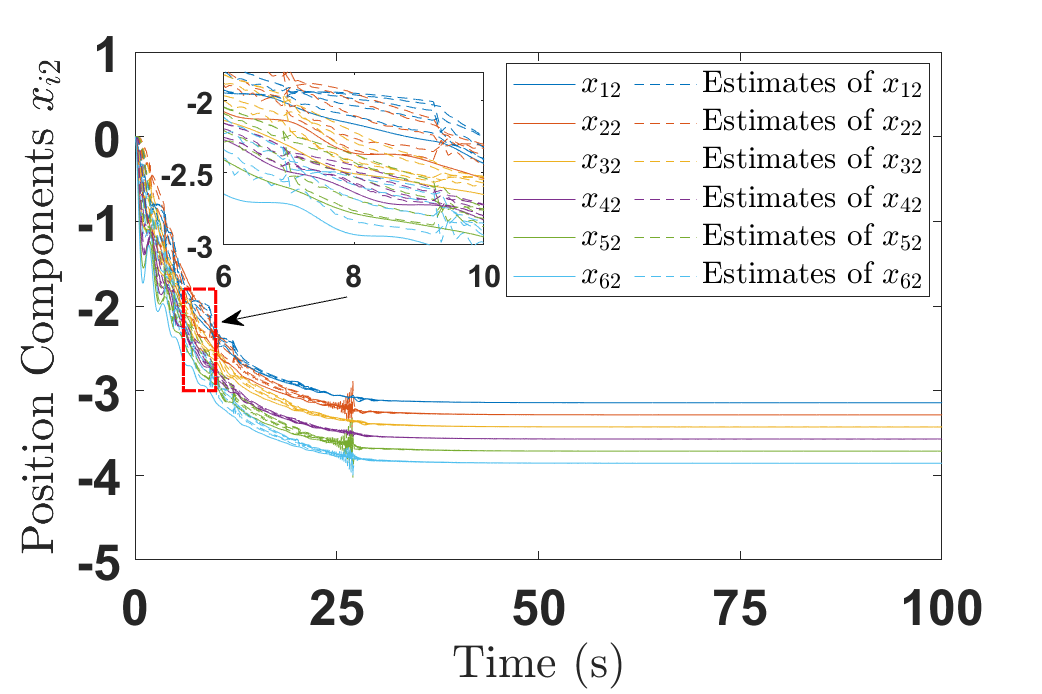}
%文中引用该图片代号
\end{minipage}}
	\caption{$x_{i1}(t)$, $x_{i2}(t)$ for the position trajectories produced by Algorithm 2 with event-triggered condition \eqref{chufa}.}\label{tutu7}
\end{figure} In order to describe the time-varying phenomena, in this subsection, semi-Markov switching topologies are considered. In other words, the communication network among vehicles is represented by a set of undirected switching topologies (as illustrated in Fig. \ref{tutu5}), which are governed by\begin{figure}[!htpb]
\centerline{\includegraphics[width=0.8\linewidth]{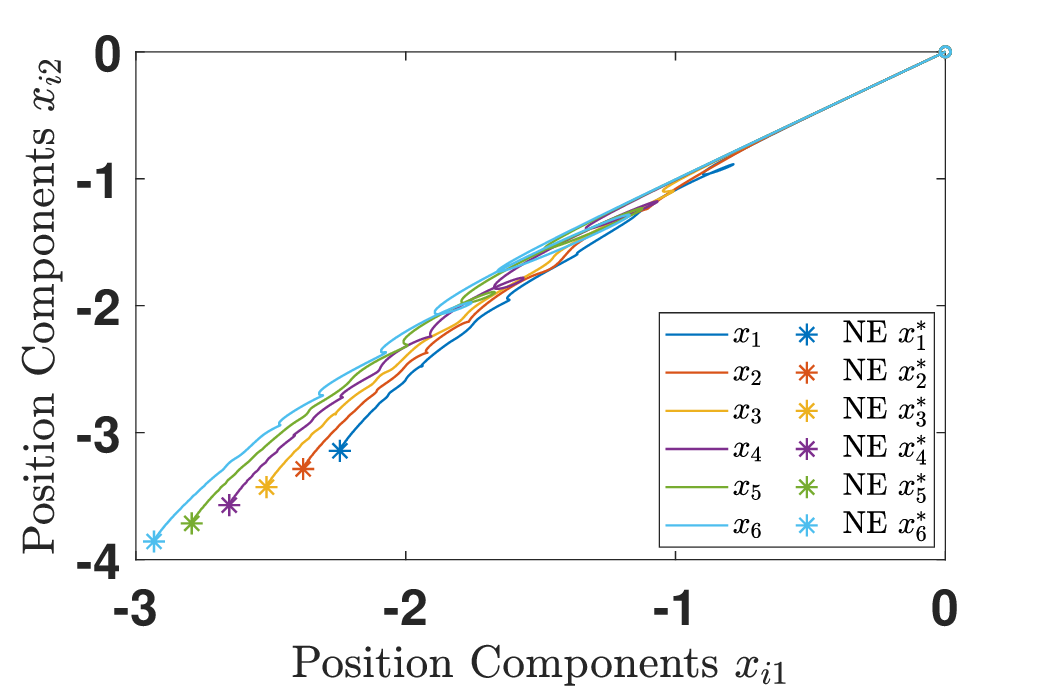}}
\caption{All the vehicles' trajectories in 2D plane.}\label{tutu8}
\label{f12}
\vspace{-0.3cm}
\end{figure}\begin{figure}[!htpb]
\centerline{\includegraphics[width=0.8\linewidth]{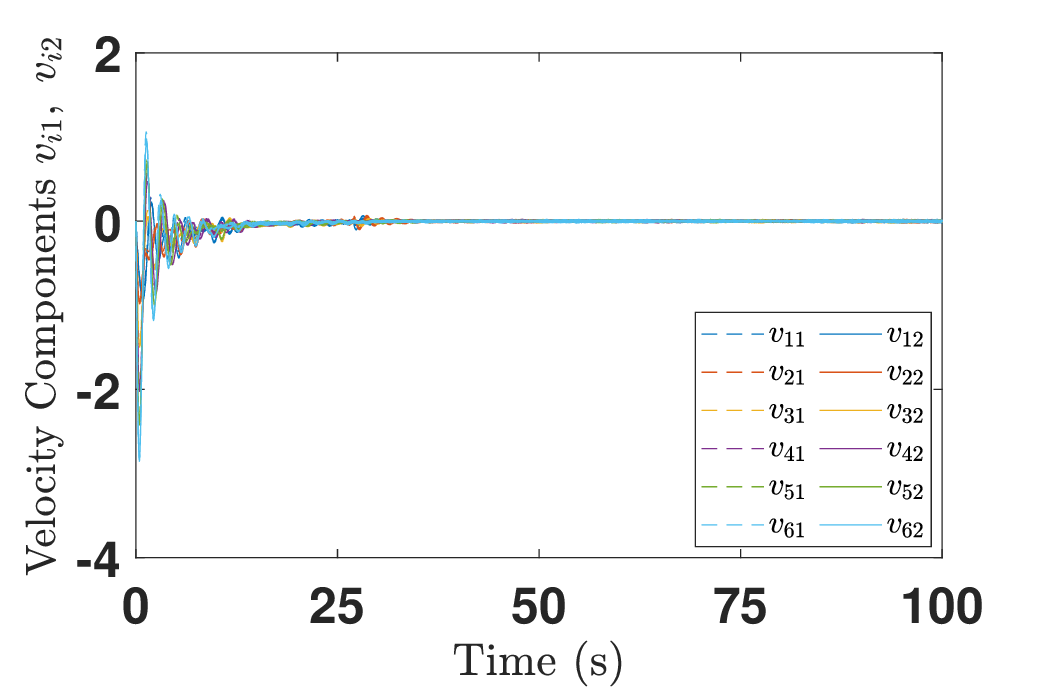}}
\caption{$v_{i1}(t)$, $v_{i2}(t)$ for the velocity trajectories produced by Algorithm 2 with event-triggered condition \eqref{chufa}.}\label{tutu9}
\label{f12}
\vspace{-0.5cm}
\end{figure}\begin{figure}[!htpb]
\centerline{\includegraphics[width=0.8\linewidth]{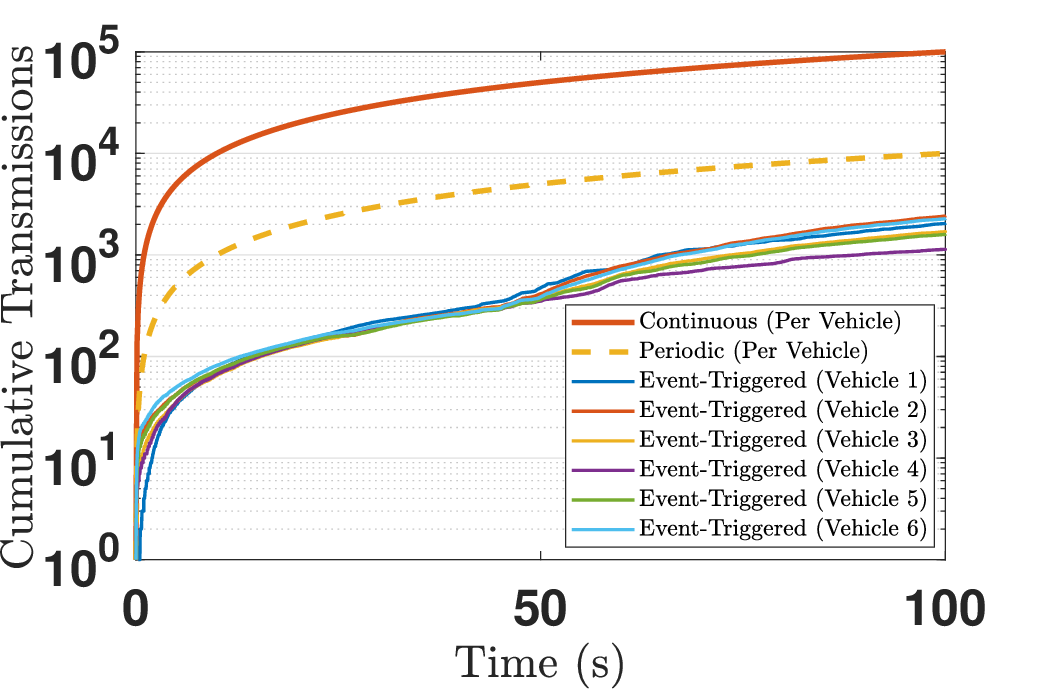}}
\caption{The cumulative number of information transmissions by three kinds of communication schemes on a semi-logarithmic scale.}\label{tutu10gai}
\end{figure}\begin{figure*}[!h]%一个图片
\centering{
\begin{minipage}{0.3\linewidth}
\includegraphics[width=0.9\linewidth]{test1.png}

\end{minipage}}
\centering{
\begin{minipage}{0.3\linewidth}
\includegraphics[width=0.9\linewidth]{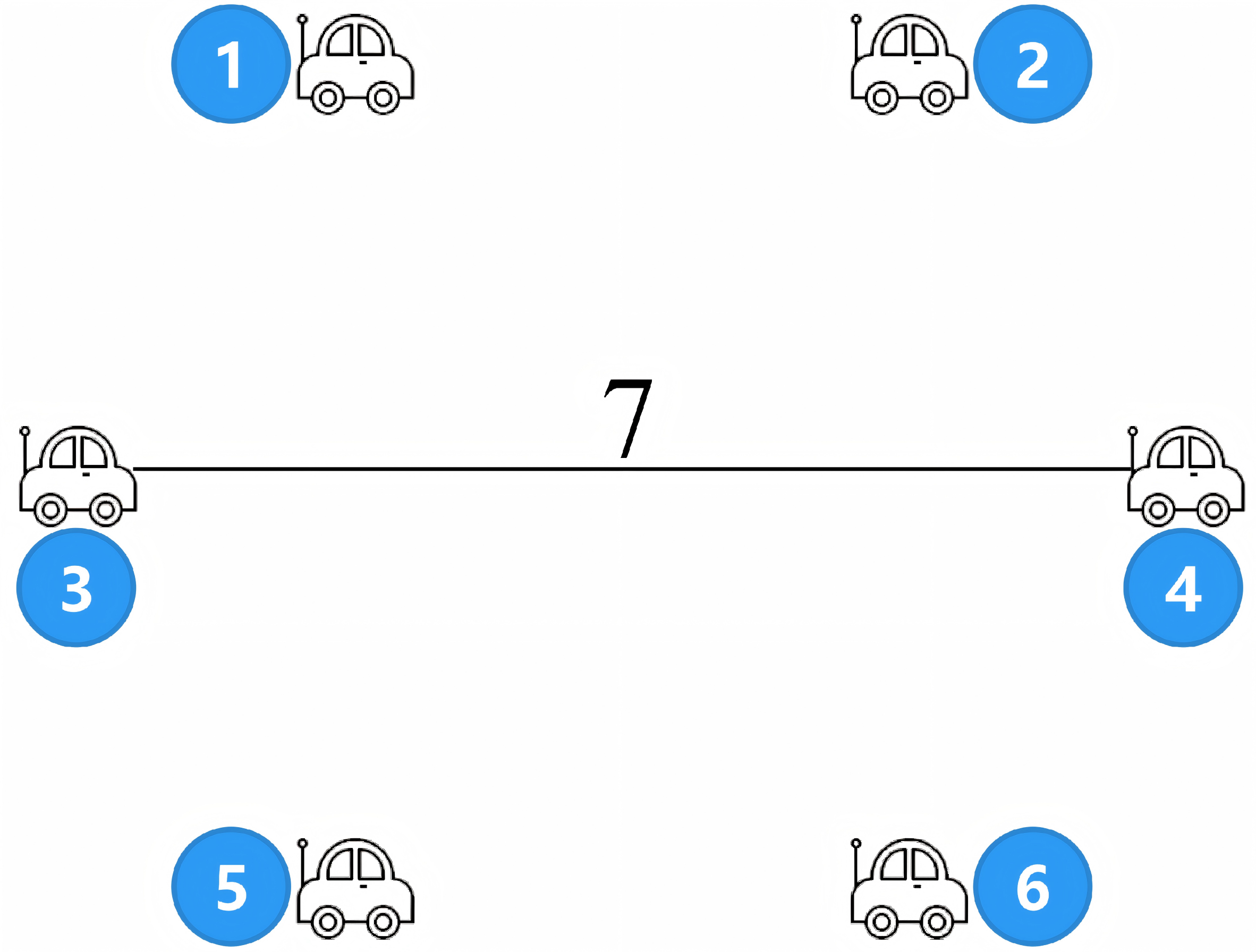}

\end{minipage}}
\centering{
\begin{minipage}{0.3\linewidth}
\includegraphics[width=0.9\linewidth]{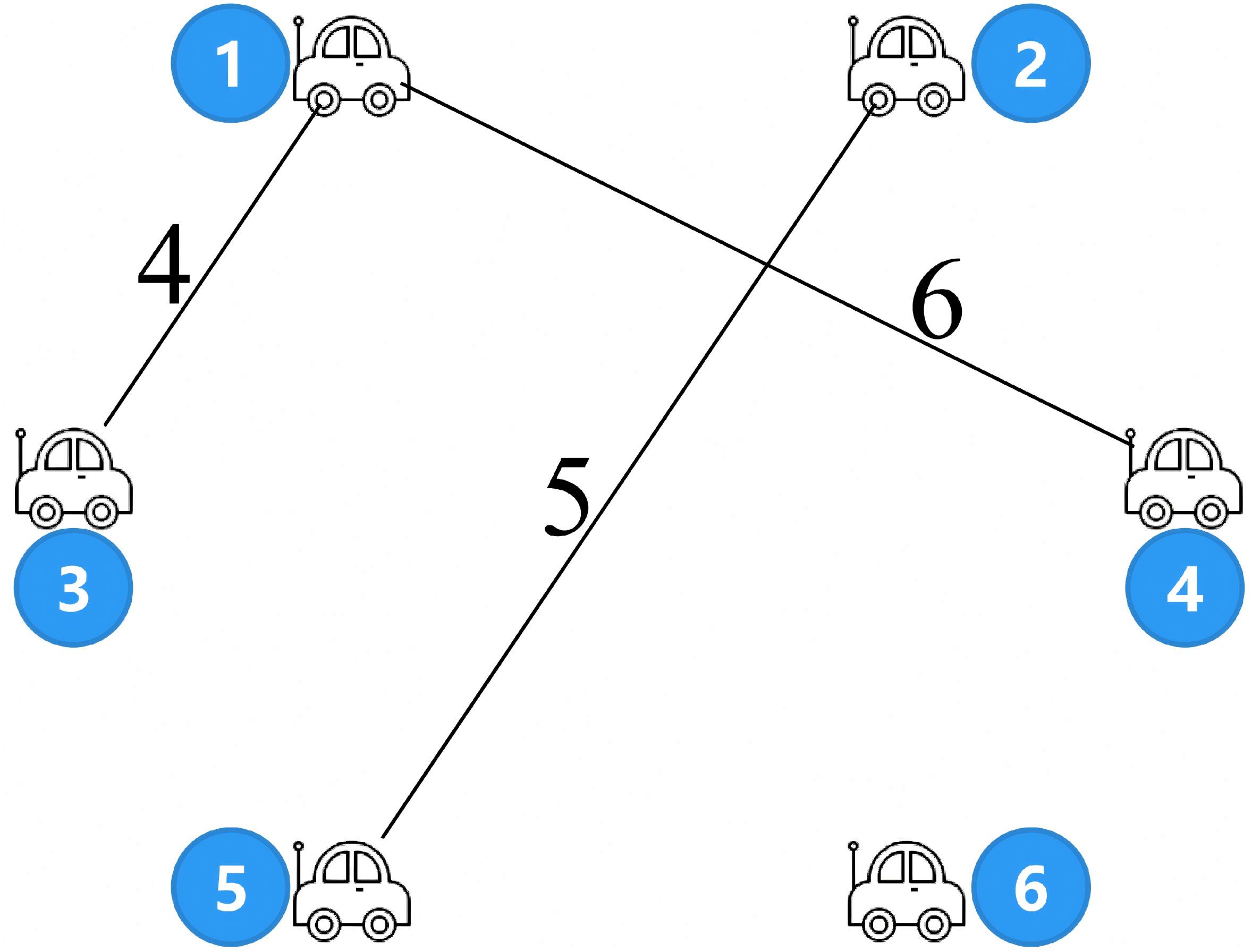}
	%文中引用该图片代号
\end{minipage}}
	\caption{Three possible switching topologies.}\label{tutu5gai}
\end{figure*} semi-Markov process with three different modes. Different from Markov process, for semi-Markov process, a general distribution, Weibull distribution (with probability density function $f(x ; \lambda, k)= \frac{k}{\lambda}\left(\frac{x}{\lambda}\right)^{k-1} e^{-(x / \lambda)^k}$ for $x\geq0$), is considered for sojourn time. The transition rates are time-varying, which are given as follows
\begin{equation*}
\begin{aligned}
&\iota_{11}(\vartheta) \in(-5.2,-1.3), \iota_{12}(\vartheta) \in(0.5,3), \iota_{13}(\vartheta) \in(0.8,2.2),\\
&\iota_{21}(\vartheta) \in(0.6,2), \iota_{22}(\vartheta) \in(-7,-2), \iota_{23}(\vartheta) \in(1.4,5),\\
&\iota_{31}(\vartheta) \in(0.3,5.6), \iota_{32}(\vartheta) \in(0.7,4.4), \iota_{33}(\vartheta) \in(-10,-1).
\end{aligned}
\end{equation*}

Although leveraging network information and interaction techniques improves both accuracy and efficiency, it can also increase the consumption of energy for constant communication. Thus, event-triggered mechanism is also introduced into the connectivity control game. Let sampling period $h=0.01s$ and the parameters $\zeta_1=\zeta_3=\zeta_5=0.1,~\zeta_2=\zeta_4=\zeta_6=0.05$. According to the linear matrix inequalities in Theorem \ref{dinglili3} and its equivalent transformation shown in \cite[Corollary 1]{dai2018event}, the event-triggered parameter matrix $\Phi$ and the feedback matrices $K(m),~ m\in\mathbb{S}$ are obtained, respectively. The simulation results produced by Algorithm 2 are shown in Figs. \ref{tutu6}-\ref{tutu10gai}.

Setting the scale parameter $\lambda=1$ and the shape parameter $k=1.5$ in Weibull distribution, we can get a semi-Markov switching signal shown in Fig. \ref{tutu6}. Fig. \ref{tutu7} displays the position trajectories of all the vehicles, indicating that their positions converge to the NE of the noncooperative game \eqref{li1}, while the position estimators accurately track the actual positions. Compared with the continuous communication scenario in Fig. \ref{tutu2}, the trajectories under the semi-Markov switching topologies and event-triggered mechanism exhibit more severe fluctuations. Nevertheless, these disturbance-induced oscillations are entirely eliminated around $t \approx 40$s, which further corroborates the disturbance rejection established in Lemma \ref{yinlili1}.
%
%
%while all the other vehicles' position estimations converge to the actual positions. Compared with the results shown in Fig. \ref{tutu2}, with semi-Markov switching and event-triggered mechanism, the trajectory of each vehicle fluctuates more severe, resulting in a longer convergence time. 
To provide a clearer spatial perspective, Fig. \ref{tutu8} demonstrates that despite the sparse communication, all the vehicles successfully navigate to their target positions in the 2D plane. Furthermore, the velocity trajectories in Fig. \ref{tutu9} confirm that all vehicles' velocities converge to zero despite facing severe transient oscillations. This substantiates the convergence results derived in Theorem \ref{dinglili3}. Moreover, to validate the communication reduction capability of the sampled-data-based event-triggered mechanism compared to continuous and periodic communication schemes, the cumulative number of information transmissions is plotted in Fig. \ref{tutu10gai} on a semi-logarithmic scale. It clearly shows that the proposed mechanism only demands communication updates when activated by the state-dependent threshold, effectively preventing the explosive accumulation of transmission counts seen in time-triggered schemes. Consequently, it demonstrates that the amount of information transmission is significantly reduced by approximately 98\% compared to the continuous communication and by approximately 80\% compared to the periodic communication, thereby maximizing the utilization efficiency of limited network resources.

In summary, the simulation results validate that Algorithm 2 achieves finite-time interference rejection and NE seeking for second-order players. Furthermore, the integration of the semi-Markov switching topologies and the sampled-data-based event-triggered mechanism endows Algorithm 2 with exceptional resilience against link failures and applicability in resource-constrained environments.
%
%The simulation results show that the NE seeking with the second-order players is achieved, and the external disturbances and uncertain dynamics can be eliminated in finite time. Moreover, with semi-Markov switching and event-triggered mechanism, the algorithms in this paper have the capability to deal with more complicated situations.
\subsection{Comparison with State-of-the-Art}
To facilitate a comparison, the connectivity control game \eqref{li1} is also addressed utilizing the algorithm proposed in \cite{fang2019distributed}. In this comparative scenario, external disturbances are excluded to isolate and independently evaluate the algorithm's effectiveness against semi-Markov switching topologies. As discussed in Remark \ref{zhujie33}, realistic communication networks often suffer from intermittent disconnections due to link failures. Therefore, the switching topologies are configured as illustrated in Fig. \ref{tutu5gai}, where the first graph is connected while the remaining two are disconnected. Under the same parameter settings as in Subsection \ref{suan2li}, but adjusting the shape parameter of the Weibull distribution to $k=0.5$ to induce a severe heavy-tailed semi-Markov process, the position trajectories produced by Algorithm 2 are presented in Fig. \ref{tutu7gai}. The results demonstrate that Algorithm 2 drives all vehicles' positions to the NE despite the prolonged communication failures.
\begin{figure}[!h]%一个图片
\centering{
\begin{minipage}{0.8\linewidth}
\includegraphics[width=\linewidth]{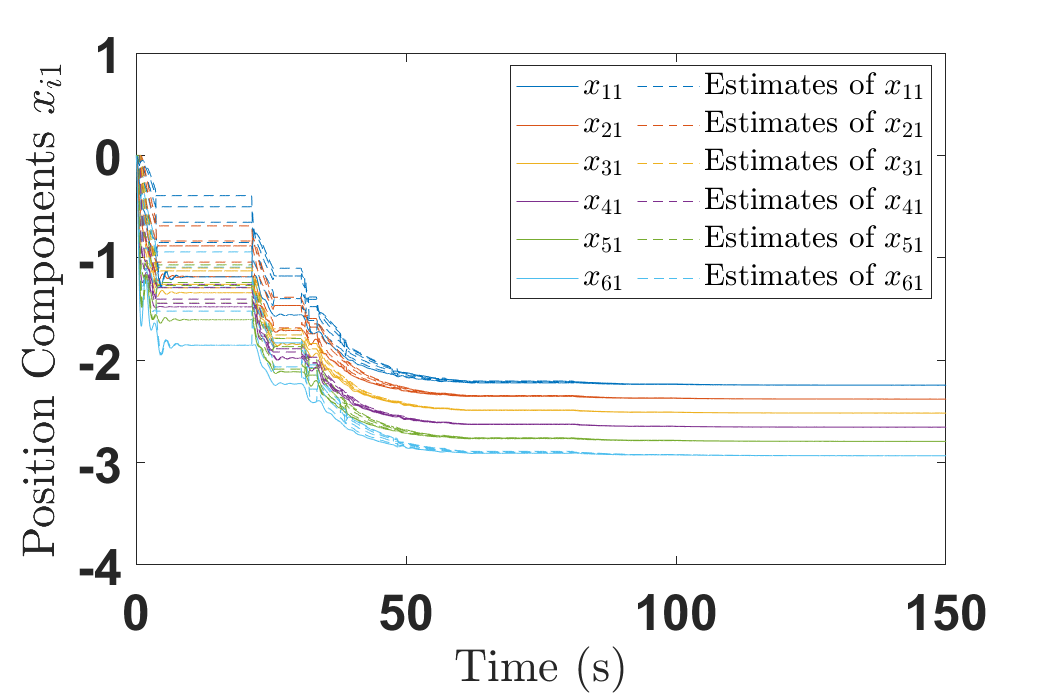}
%文中引用该图片代号
\end{minipage}}
\centering{
\begin{minipage}{0.8\linewidth}
\includegraphics[width=\linewidth]{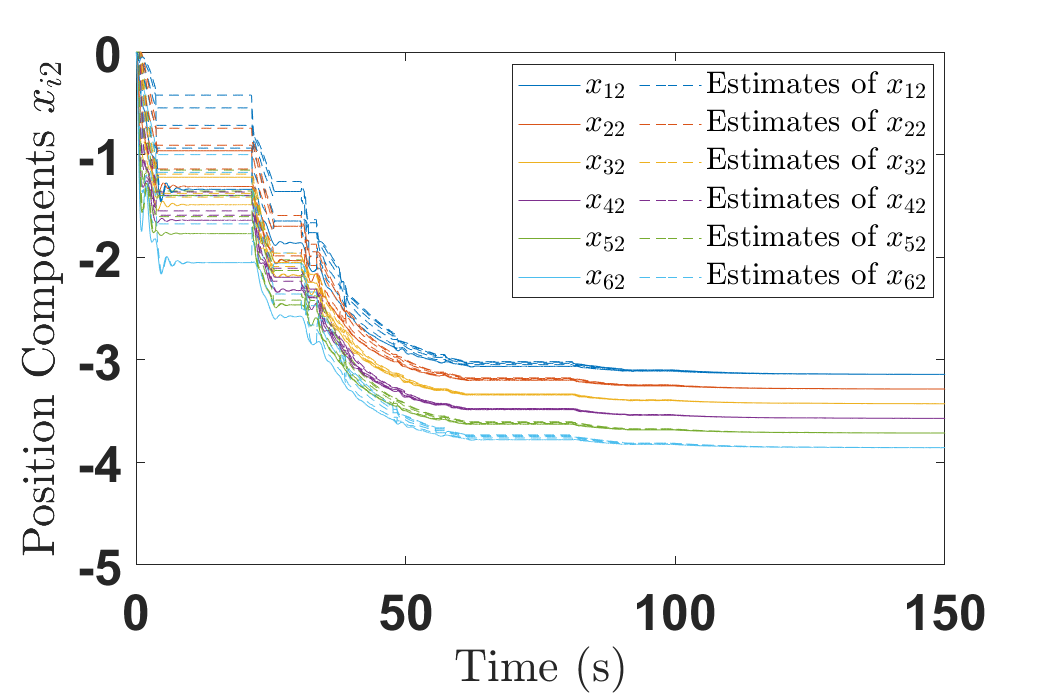}
%文中引用该图片代号
\end{minipage}}
	\caption{$x_{i1}(t)$, $x_{i2}(t)$ for the position trajectories produced by Algorithm 2 with event-triggered condition \eqref{chufa}.}\label{tutu7gai}
\end{figure}

In stark contrast, under the identical semi-Markov switching sequence and setting the parameters ($\delta=0.005$, $\alpha_i=1, i\in\mathfrak{N}$, $T=0.001$, $\gamma=20$), the trajectories produced by the algorithm in \cite{fang2019distributed} are presented in Fig. \ref{tutu7semi}. It can be seen that even when the simulation is extended to $T_{\max}=500$s, the estimation variables fail to achieve consensus. This performance degradation fundamentally stems from the heavy-tailed property of the semi-Markov chain, which occasionally traps the communication graph in disconnected modes for extended durations. \begin{figure}[!h]%一个图片
\centering{
\begin{minipage}{0.8\linewidth}
\includegraphics[width=\linewidth]{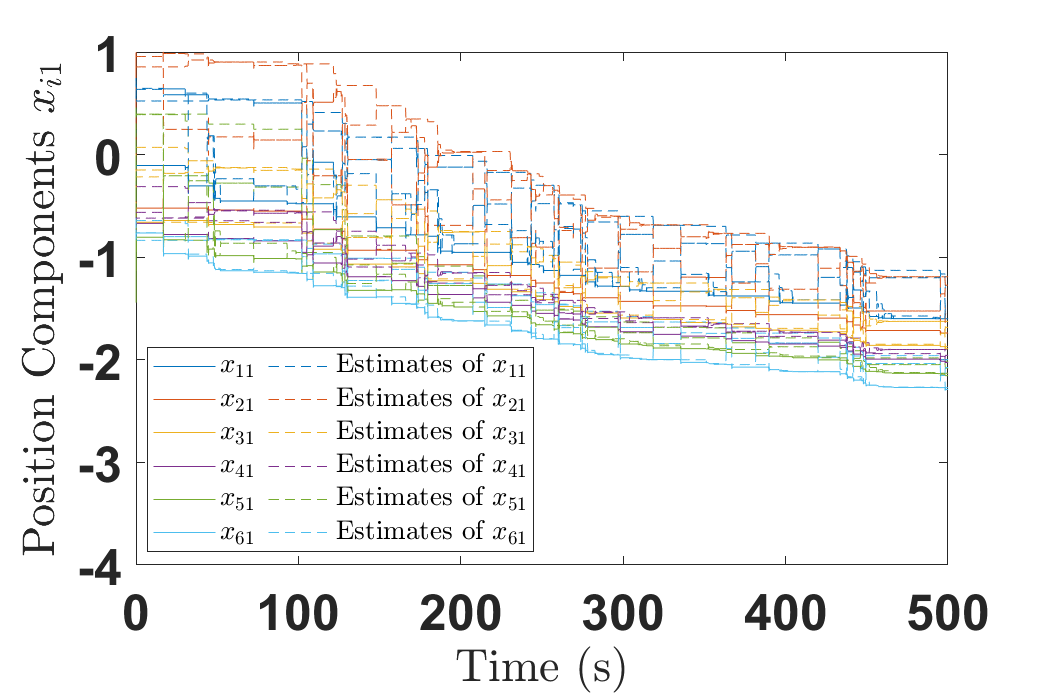}
%文中引用该图片代号
\end{minipage}}
\centering{
\begin{minipage}{0.8\linewidth}
\includegraphics[width=\linewidth]{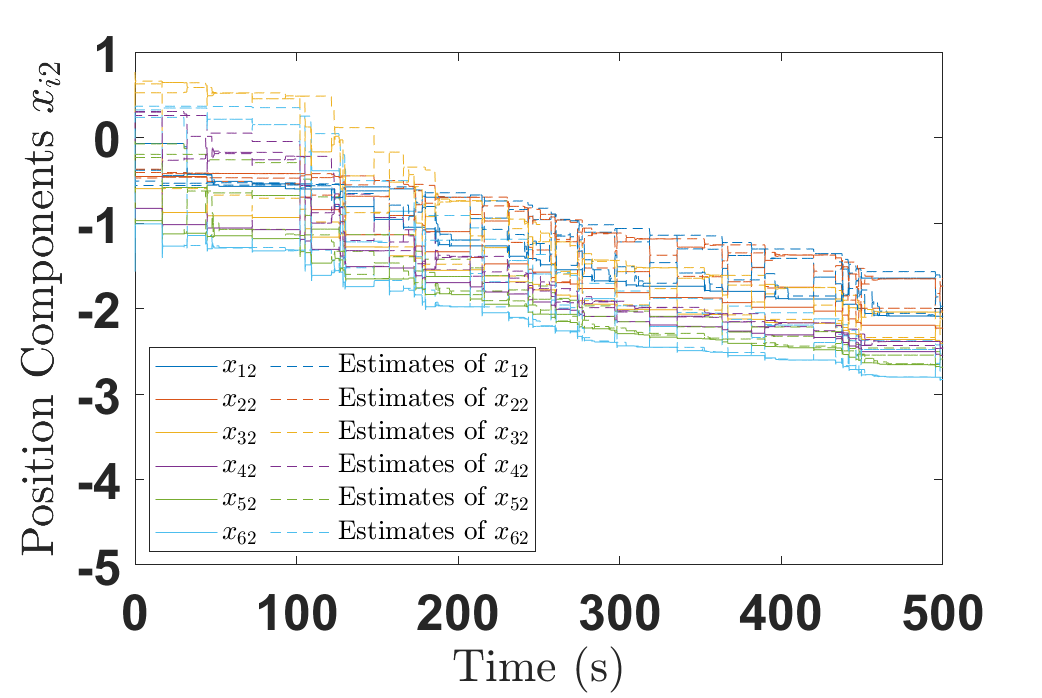}
%文中引用该图片代号
\end{minipage}}
	\caption{$x_{i1}(t)$, $x_{i2}(t)$ for the position trajectories produced by the algorithm in \cite{fang2019distributed} under semi-Markov switching topologies.}\label{tutu7semi}
\vspace{-0.5cm}
\end{figure}

To further corroborate this analysis, we degenerate the semi-Markov chain into a standard, memoryless Markov chain. As depicted in Fig. \ref{tutu7mar}, without the heavy-tailed sojourn times, the algorithm in \cite{fang2019distributed} regains its tracking ability and reaches the NE within a short period.
\begin{figure}[!h]%一个图片
\centering{
\begin{minipage}{0.8\linewidth}
\includegraphics[width=\linewidth]{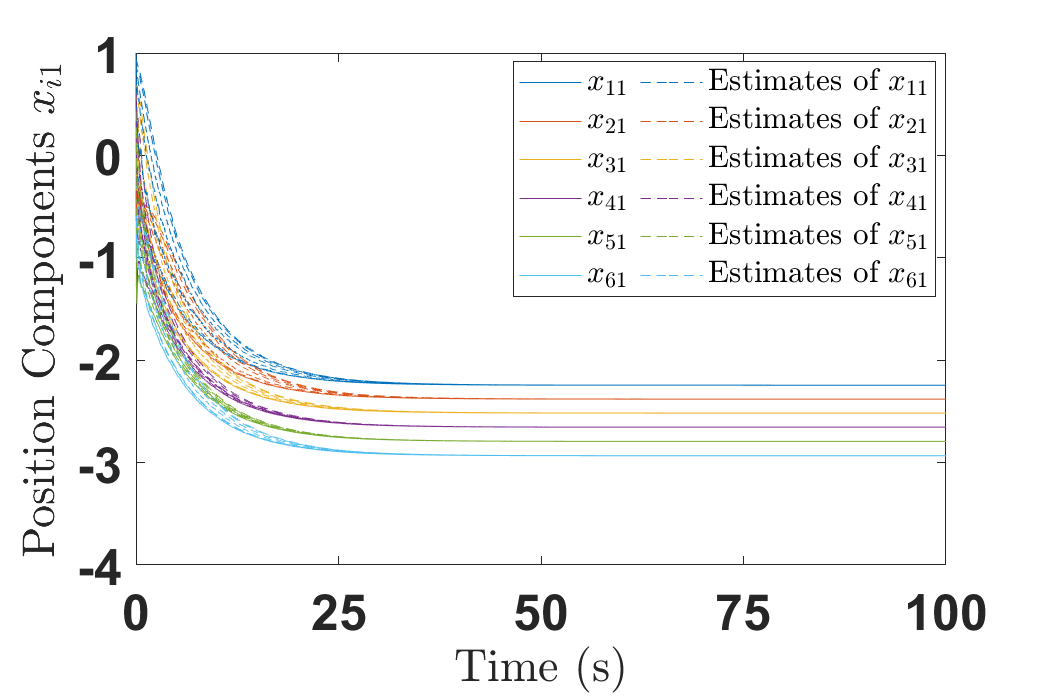}
%文中引用该图片代号
\end{minipage}}
\centering{
\begin{minipage}{0.8\linewidth}
\includegraphics[width=\linewidth]{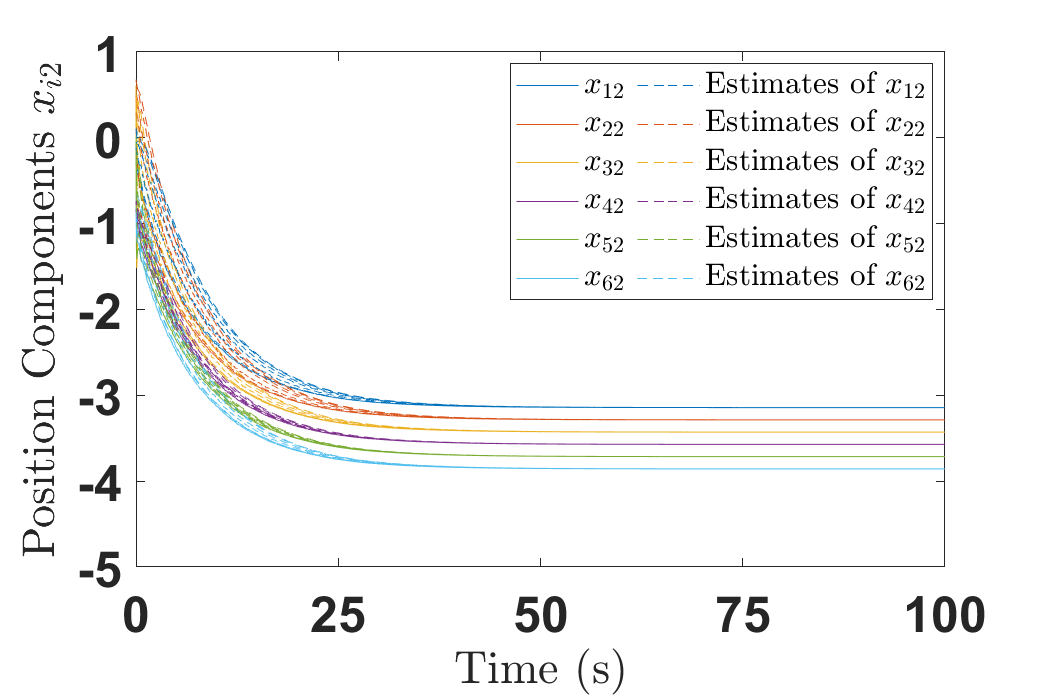}
%文中引用该图片代号
\end{minipage}}
	\caption{$x_{i1}(t)$, $x_{i2}(t)$ for the position trajectories produced by the algorithm in \cite{fang2019distributed} under Markov switching topologies}\label{tutu7mar}
\vspace{-0.5cm}
\end{figure}

These comparative results comprehensively demonstrate the effectiveness of Algorithm 2 in terms of maintaining stability and NE seeking under intermittently disconnected networks governed by heavy-tailed semi-Markov processes.

%
%To facilitate a comparison, the connectivity control game \eqref{li1} is also addressed through the algorithm proposed by \cite{fang2019distributed} avoiding the inclusion of external disturbances. As shown in Remark \ref{zhujie33}, the communication topology among vehicles may not keep connected all the time due to link failure. Thus, the communication network among vehicles is represented by a set of undirected switching topologies (as illustrated in Fig. \ref{tutu5gai}). The first graph is connected and the other two are not. Based on the same parameter setting as in Subsection \eqref{suan2li} other than letting the shape parameter $k=0.5$ in Weibull distribution, the simulation results by Algorithm 2 are presented in Fig. \ref{tutu7gai}. It clearly shows that all the vehicles still reach the target positions. Then, for comparison, setting $\delta=0.005$, $\alpha_i=1$ for $i\in\mathfrak{N}$, $T=0.001$, $\gamma=20$, the simulation results by the algorithm in \cite{fang2019distributed} are presented in Fig. \ref{tutu7semi}. It can be seen that even for $t=450$, there is no trend of consensus on the estimation variables. Also, due to the long tail property of the semi-Markov chain, the communication graph may be stuck in an unconnected graph for a long time. Furthermore, transforming the semi-Markov chain into Markov chain, the simulation results by the algorithm in \cite{fang2019distributed} are presented in Fig. \ref{tutu7mar}. It clearly shows that all the vehicles still reach the target positions within a short time.
	\section{Conclusion}\label{sec6}
This paper considered robust NE seeking subject to external disturbances and uncertain dynamics. By combining a supertwisting-based ISMC scheme with average consensus tracking, these interferences are rejected in finite time. Furthermore, to overcome link failures and high communication overhead, semi-Markov switching topologies and a sampled-data-based event-triggered mechanism were introduced. A novel leader-follower protocol is designed to guarantee the mean-square convergence of players' action estimations. Finally, application to a connectivity control game verified the algorithms' robustness and convergence.

\appendices
\section{Proof of Theorem \ref{dinglili3}}\label{fulu1}
For convenience, in the following, $\frac{\text{d}{\delta}}{\text{d}\tau}$ is still denoted by $\dot{\delta}$. Define the candidate Lyapunov function as
\begin{small}\begin{equation*}
\begin{aligned}
V(\tau, \delta _{\tau}, \dot{\delta}_ {\tau}, r(\tau))=&V_1(\tau, \delta _{\tau}, \dot{\delta}_ {\tau}, r(\tau))+V_2(\tau, \delta _{\tau}, \dot{\delta}_ {\tau}) \\
&+V_3(\tau, \delta _{\tau}, \dot{\delta}_ {\tau}), ~\forall \tau \in\left[\frac{q h}{\varepsilon},\frac{(q+1) h}{\varepsilon}\right),
\end{aligned}
\end{equation*}\end{small}in which
\begin{equation*}
\begin{aligned}
V_1=&\delta^{\rm T}(\tau)\left(\mathbf{I}_{Nn} \otimes \mathds{P}(r(\tau))\right) \delta(\tau),\\
V_2=&\int_{\tau-\frac{h}{\varepsilon}}^\tau \delta^{\rm T}(s)\left(\mathbf{I}_{Nn} \otimes \mathds{Q}\right) \delta(s) \text{d} s\\
&+\int_{\tau-\varsigma(\tau)}^\tau \delta^{\rm T}(s)\left(\mathbf{I}_{Nn} \otimes \mathds{U}\right) \delta(s) \text{d} s,\\
V_3=&\frac{h}{\varepsilon} \int_{-\frac{h}{\varepsilon}}^0 \int_{\tau+\varpi}^\tau \dot{\delta}^{\rm T}(s)\left(\mathbf{I}_{Nn}\otimes \mathds{R}\right) \dot{\delta}(s) \text{d} s \text{d} \varpi.
\end{aligned}
\end{equation*}
Declare the weak infinitesimal operator $\mathfrak{F}$ of $V$ as follows
\begin{equation*}
\begin{aligned}
&\mathfrak{F}V(\tau, \delta _{\tau}, \dot{\delta}_ {\tau}, r(\tau))=\lim _{\Delta \rightarrow 0^{+}} \frac{1}{\Delta}\bigg\{\mathbb{E}\{V(\tau+\Delta, \delta_{ \tau+\Delta}, \dot{\delta}_{ \tau+\Delta}, \\
&r(\tau+\Delta)) \mid\delta _\tau, r(\tau)\}-V(\tau, \delta _\tau, \dot{\delta}_\tau, r(\tau))\bigg\}.
\end{aligned}
\end{equation*}
Following directly from the same expression in the proof of Theorem 1 in \cite{miao2014mean}, we can get
\begin{small}\begin{equation*}
\begin{aligned}
\mathfrak{F} V_1=&\sum_{a=1}^s \lambda_{m n}(\vartheta) \delta^{\rm T}(\tau)\left(\mathbf{I}_{Nn} \otimes \mathds{P}(a)\right) \delta(\tau)\\
&+\dot{\delta}^{\rm T}(\tau)\left(\mathbf{I}_{Nn} \otimes \mathds{P}(m)\right) \delta(\tau)+\delta^{\rm T}(\tau)\left(\mathbf{I}_{Nn}\otimes \mathds{P}(m)\right)\dot{\delta}(\tau).
\end{aligned}
\end{equation*}\end{small}Then, for $V_2(\tau, \delta _{\tau}, \dot{\delta}_ {\tau})$ and $V_3(\tau, \delta _{\tau}, \dot{\delta}_ {\tau})$, one obtains
\begin{equation*}
\begin{aligned}
\mathfrak{F} V_2=&\delta^{\rm T}(\tau)\left[\mathbf{I}_{Nn} \otimes(\mathds{Q+U})\right] \delta(\tau)\\
&-\delta^{\rm T}(\tau-\frac{h}{\varepsilon}) (\mathbf{I}_{Nn}\otimes \mathds{Q})\delta(\tau-\frac{h}{\varepsilon}),\\
\mathfrak{F}V_3=&\frac{h^2}{\varepsilon^2} \dot{\delta}^{\rm T}(\tau)\left(\mathbf{I}_{Nn}\otimes \mathds{R}\right) \dot{\delta}(\tau)\\
&-\frac{h}{\varepsilon} \int_{\tau-\frac{h}{\varepsilon}}^\tau \dot{\delta}^{\rm T}(s)\left(\mathbf{I}_{Nn} \otimes R\right) \dot{\delta}(s) \text{d} s.
\end{aligned}
\end{equation*}
Based on Jensen's inequality, one derives
\begin{equation*}
\begin{aligned}
&-\frac{h}{\varepsilon} \int_{\tau-\frac{h}{\varepsilon}}^\tau \dot{\delta}^{\rm T}(s)\left(\mathbf{I}_{Nn} \otimes \mathds{R}\right) \dot{\delta}(s) \text{d} s\\
=&-\frac{h}{\varepsilon} \int_{\tau-\frac{h}{\varepsilon}}^{\tau-\varsigma(\tau)} \dot{\delta}^{\rm T}(s)\left(\mathbf{I}_{Nn} \otimes \mathds{R}\right) \dot{\delta}(s) \text{d} s\\
&-\frac{h}{\varepsilon} \int_{\tau-\varsigma(\tau)}^\tau \dot{\delta}^{\rm T}(s)\left(\mathbf{I}_{Nn}\otimes \mathds{R}\right) \dot{\delta}(s) \text{d} s\\
\leq&-\frac{h}{h-\varepsilon\varsigma(\tau)} \xi^{\rm T}(\tau) \text{\uppercase\expandafter{\romannumeral2}}_{23}^{\rm T}\left(\mathbf{I}_{Nn}\otimes \mathds{R}\right) \text{\uppercase\expandafter{\romannumeral2}}_{23} \xi(\tau)\\
&-\frac{h}{\varepsilon\varsigma(\tau)} \xi^{\rm T}(\tau)\text{\uppercase\expandafter{\romannumeral2}}_{12}^{\rm T}(\mathbf{I}_{Nn}\otimes \mathds{R})\text{\uppercase\expandafter{\romannumeral2}}_{12}\xi(\tau),
\end{aligned}
\end{equation*}
where $\xi(\tau)=\operatorname{col}(\delta(\tau), \delta(\tau-\varsigma(\tau)), \delta(\tau-\frac{h}{\varepsilon}), e(\tau-\varsigma(\tau)))$.\\
According to Lemma \ref{yinli32}, there is
\begin{equation*}
\begin{aligned}
&-\frac{h}{\varepsilon} \int_{\tau-\frac{h}{\varepsilon}}^\tau \dot{\delta}^{\rm T}(s)\left(\mathbf{I}_{Nn} \otimes \mathds{R}\right) \dot{\delta}(s) \text{d} s\\
\leq&-\xi^{\rm T}(\tau)\left[\begin{array}{l}
\text{\uppercase\expandafter{\romannumeral2}}_{23} \\
\text{\uppercase\expandafter{\romannumeral2}}_{12}
\end{array}\right]^{\rm T}\left[\begin{array}{cc}
\mathbf{I}_{Nn} \otimes \mathds{R} & \mathds{S} \\
\mathds{S} & \mathbf{I}_{Nn} \otimes \mathds{R}
\end{array}\right]\left[\begin{array}{l}
\text{\uppercase\expandafter{\romannumeral2}}_{23} \\
\text{\uppercase\expandafter{\romannumeral2}}_{12}
\end{array}\right] \xi(\tau).
\end{aligned}
\end{equation*}
On account of event-triggered condition \eqref{chufa}, it gives that
\begin{equation*}
\begin{aligned}
&e^{\rm T}(q h)\left(\mathbf{I}_{N}\otimes \Phi\right) e(q h)=\sum_{i=1}^N e_i^{\rm T}(q h) \Phi e_i(q h) \\
\leq& \sum_{i=1}^N \zeta_i z_i^{\rm T}(q h)\Phi z_i(q h)=z^{\rm T}(q h)(\Lambda \otimes \Phi)z(qh),
\end{aligned}
\end{equation*}
and
\begin{equation*}
z(q h) \left.=\left((L(m)\otimes \mathbf{I}_N+A_0(m))\otimes\mathbf{I}_n\right)[\delta(q h)+e (qh)\right],
\end{equation*}
where $z:=\col(z_1,z_2,\ldots,z_N)$. Thus, we can conclude that
\begin{equation*}
\begin{aligned}
&e^{\rm T}(\tau-\varsigma(\tau))\left(\mathbf{I}_{N} \otimes \Phi\right) e(\tau-\varsigma(\tau))\\
\leq&(\delta(\tau-\varsigma(\tau))+e(\tau-\varsigma(\tau)))^{\rm T} \left(\mathcal{H}(m)^{\rm T} (\Lambda \otimes \Phi) \mathcal{H}(m)\right)\\
&\times(\delta(\tau-\varsigma(\tau))+e(\tau-\varsigma(\tau))).
\end{aligned}
\end{equation*}
Not only that, based on the definition of $\delta$, we can get that $\dot{\delta}(\tau)=\mathds{B}(m)\xi(\tau)$. Thus, combining the above discussion, we derive that
\begin{small}\begin{equation*}
\begin{aligned}
\mathfrak{F}V \leq& \sum_{a=1}^s \lambda_{m n}(\vartheta) \delta^{\rm T}(\tau)\left(\mathbf{I}_{Nn}\otimes \mathds{P}(a)\right) \delta(\tau)\\
&+\dot{\delta}^{\rm T}(\tau)\left(\mathbf{I}_{Nn}\otimes \mathds{P}(m)\right) \delta(\tau)+\delta^{\rm T}(\tau)\left(\mathbf{I}_{Nn} \otimes \mathds{P}(m)\right) \dot{\delta}(\tau)\\
&+\delta^{\rm T}(\tau)\left[\mathbf{I}_{Nn} \otimes(\mathds{Q+U})\right] \delta(\tau)+\frac{h^2}{\varepsilon^2} \dot{\delta}^{\rm T}(\tau)\left(\mathbf{I}_{Nn}\otimes \mathds{R}\right) \dot{\delta}(\tau)\\
&-\delta^{\rm T}(\tau-\frac{h}{\varepsilon}) (\mathbf{I}_{Nn}\otimes \mathds{Q})\delta(\tau-\frac{h}{\varepsilon})\\
&-\xi^{\rm T}(\tau)\left[\begin{array}{l}
\text{\uppercase\expandafter{\romannumeral2}}_{23} \\
\text{\uppercase\expandafter{\romannumeral2}}_{12}
\end{array}\right]^{\rm T}\left[\begin{array}{cc}
\mathbf{I}_{Nn} \otimes \mathds{R} & \mathds{S} \\
\mathds{S} & \mathbf{I}_{Nn} \otimes \mathds{R}
\end{array}\right]\left[\begin{array}{l}
\text{\uppercase\expandafter{\romannumeral2}}_{23} \\
\text{\uppercase\expandafter{\romannumeral2}}_{12}
\end{array}\right] \xi(\tau)\\
&-e^{\rm T}(\tau-\varsigma(\tau))\left(\mathbf{I}_{N} \otimes \Phi\right) e(\tau-\varsigma(\tau))\\
&+(\delta(\tau-\varsigma(\tau))+e(\tau-\varsigma(\tau)))^{\rm T} \left(\mathcal{H}(m)^{\rm T} (\Lambda \otimes \Phi) \mathcal{H}(m)\right)\\
&\times(\delta(\tau-\varsigma(\tau))+e(\tau-\varsigma(\tau)))\\
=&\xi^{\rm T}(\tau)\Psi(m) \xi(\tau),
\end{aligned}
\end{equation*}\end{small}where $\Psi(m)=\Xi_1(m)+\frac{h^2}{\varepsilon^2}\mathds{B}(m)^{\rm T}\left(\mathbf{I}_{Nn} \otimes \mathds{R}\right)\mathds{B}(m)$.

Based on the inequalities \eqref{juzhen1} and \eqref{juzhen2}, one derives
\begin{equation*}
\mathfrak{F}V(\tau, \delta _{\tau}, \dot{\delta}_ {\tau}, r(\tau))<0,~\tau\in\left[\frac{qh}{\varepsilon},\frac{(q+1)h}{\varepsilon}\right).
\end{equation*}
Then, it also means that, for sufficiently small $\epsilon$, we have
\begin{equation*}
\mathfrak{F}V(\tau, \delta _{\tau}, \dot{\delta}_ {\tau}, r(\tau))<-\epsilon\delta^{\rm T}(\tau)\delta(\tau),~\tau\in\left[\frac{qh}{\varepsilon},\frac{(q+1)h}{\varepsilon}\right).
\end{equation*}
For convenience, denote $\tau_q=\frac{qh}{\varepsilon}$ and $\tau_{q+1}=\frac{(q+1)h}{\varepsilon}$. Then, according to Dynkin's formula, there is
\begin{small}\begin{equation*}
\begin{aligned}
&\mathbb{E}\{V(\tau_{q+1}^-  ,\delta_{\tau_{q+1}^-}, \dot{\delta}_{\tau_{q+1}^-} ,r(\tau_{q+1}^-))\}- \mathbb{E}\{V(\tau_{q}  ,\delta_{\tau_{q}}, \dot{\delta}_{\tau_{q}} ,r(\tau_{q}))\}\\
\leq&-\epsilon \mathbb{E}\left\{\int_{\tau_q}^{\tau_{q+1}^{-}}\|\delta(s)\|^2 \text{d} s\right\}.
\end{aligned}
\end{equation*}\end{small}
Similarly, one has
\begin{small}
\begin{equation*}
\begin{aligned}
&\mathbb{E}\{V(\tau_q^{-}, \delta_{\tau_q^{-}}, \dot{\delta}_{\tau_q^{-}}, r(\tau_q^{-}))\}-\mathbb{E}\{V(\tau_{q-1}, \delta_{\tau_{q-1}}, \dot{\delta}_{\tau_{q-1}}, r(\tau_{q-1}))\} \\
\leq&-\epsilon \mathbb{E}\left\{\int_{t_{q-1}}^{t_{q}^{-}}\|\delta(s)\|^2 \text{d} s\right\} \\
&~~~~~~~~~~~~~~~~~~~~~~~~~~~~~~~~~~\vdots \\
&\mathbb{E}\{V(\tau_1^{-}, \delta_{\tau_1^{-}}, \dot{\delta}_{\tau_1^{-}}, r(\tau_1^{-}))\}-\mathbb{E}\{V(0, \delta_0, \dot{\delta}_0, r(0))\} \\
\leq&-\epsilon \mathbb{E}\left\{\int_0^{t_1^{-}}\|\delta(s)\|^2 \text{d} s\right\}.
\end{aligned}
\end{equation*}
\end{small}
Due to the fact that $V_2(\tau, \delta _{\tau}, \dot{\delta}_ {\tau})\geq0$ and $\int_{\tau-\varsigma(\tau)}^\tau \delta^{\rm T}(s)\left(\mathbf{I}_{Nn}\otimes \mathds{U}\right) \delta(s) \text{d} s=0$ when $\tau=\frac{qh}{\varepsilon}$, we have $$\lim _{ \tau \rightarrow (\frac{q h}{\varepsilon})^{-}}V_2(\tau, \delta _{\tau}, \dot{\delta}_ {\tau}) \geq V_2(\tau, \delta _{\tau}, \dot{\delta}_ {\tau})|_{\tau=\frac{q h}{\varepsilon}}.$$ It also means that
\begin{equation*}
\mathbb{E}\{ V(\tau_q^-, \delta_{\tau_q^-},\dot{\delta}_{\tau_q^-}, r(\tau_q^-))\} \geq \mathbb{E}\{V(\tau_q, \delta_{\tau_q}, \dot{\delta}_{ \tau_q}, r(\tau_q))\}.
\end{equation*}
Therefore, based on the above inequalities, one can obtain
$$\sum_{q=0}^{\infty} \mathbb{E}\left\{\int_{\tau_ q}^{\tau_{q+1}^-}||\delta(s)||^2\text{d}s\right\}\leq\epsilon^{-1} \mathbb{E}\left\{ V\left(0, \delta_0, \dot{\delta}_0, r(0)\right) \right\}.$$
It is to say
$$\lim _{\gamma \rightarrow \infty} \mathbb{E}\left\{\int_0^\gamma\|\delta(s) \|^2 \text{d} s\right\}<\infty,$$
which also implies that $\lim _{\gamma \rightarrow \infty} \mathbb{E}\|\delta(\gamma)\|^2=0$. Thus, the state of the boundary-layer system \eqref{bianjie2} with event-triggered mechanism \eqref{chufa} achieves consensus in the mean-square sense.
\section{Proof of Corollary \ref{tuilun111}}\label{fulu2}
The only work left is to prove that the state of the system \eqref{bianjie2} achieves consensus in the sense of mean-square. Through a similar process in Theorem \ref{dinglili3}, we can get
\begin{equation*}
\mathfrak{F}V \leq\xi^{\rm T}(\tau)\Psi(m) \xi(\tau).
\end{equation*}
Based on the relaxed condition, $\mathfrak{F}V$ satisfies
\begin{equation*}
\mathfrak{F}V(\tau, \delta_{\tau}, \dot{\delta}_{\tau}, r(\tau)=m) \leq
\begin{cases}
-\imath_m \|\xi(\tau)\|^2, & m \in \mathbb{S}_c,\\
\jmath_m \|\xi(\tau)\|^2, & m \in \mathbb{S}_d.
\end{cases}
\end{equation*}
Define an indicator function $\chi_m(\tau)$ such that $\chi_m(\tau) = 1$ if $r(\tau) = m$, and $\chi_m(\tau) = 0$ otherwise. Then, the expectation of $\mathfrak{F}V$ along the system trajectory can be bounded by
\begin{equation}\label{ergodic_eq}
\mathbb{E}\left\{\mathfrak{F}V\right\} \leq \mathbb{E} \left\{ ( \sum_{m \in \mathbb{S}_c} (-\alpha_m \chi_m(\tau)) + \sum_{n \in \mathbb{S}_d} \beta_n \chi_n(\tau) ) \|\xi(\tau)\|^2 \right\}.
\end{equation}
Since the semi-Markov process $r(\tau)$ is ergodic, according to the ergodic theorem \cite{limnios2012semi}, the time average of the indicator function converges almost surely to the stationary probability distribution $\pi_m$, i.e.,
\begin{equation*}
\lim_{\Gamma \to \infty} \frac{1}{\Gamma} \int_0^{\Gamma} \chi_m(s) \text{d}s = \pi_m, \quad \text{a.s.}
\end{equation*}
Consequently, applying the Dynkin's formula yields
\begin{small}\begin{equation*}
\begin{aligned}
&\mathbb{E}\{V(\tau_{q+1}^-  ,\delta_{\tau_{q+1}^-}, \dot{\delta}_{\tau_{q+1}^-} ,r(\tau_{q+1}^-))\}- \mathbb{E}\{V(\tau_{q}  ,\delta_{\tau_{q}}, \dot{\delta}_{\tau_{q}} ,r(\tau_{q}))\}\\
\leq&\mathbb{E}\left\{\int_{\tau_q}^{\tau_{q+1}^{-}}( \sum_{m \in \mathbb{S}_c} (-\alpha_m \chi_m(s)) + \sum_{n \in \mathbb{S}_d} \beta_n \chi_n(s) ) \|\xi(s)\|^2 \text{d} s\right\}.
\end{aligned}
\end{equation*}\end{small}
Similarly, one has
\begin{small}
\begin{equation*}
\begin{aligned}
&\mathbb{E}\{V(\tau_q^{-}, \delta_{\tau_q^{-}}, \dot{\delta}_{\tau_q^{-}}, r(\tau_q^{-}))\}-\mathbb{E}\{V(\tau_{q-1}, \delta_{\tau_{q-1}}, \dot{\delta}_{\tau_{q-1}}, r(\tau_{q-1}))\} \\
\leq&\mathbb{E}\left\{\int_{t_{q-1}}^{t_{q}^{-}}( \sum_{m \in \mathbb{S}_c} (-\alpha_m \chi_m(s)) + \sum_{n \in \mathbb{S}_d} \beta_n \chi_n(s) ) \|\xi(s)\|^2 \text{d} s\right\} \\
&~~~~~~~~~~~~~~~~~~~~~~~~~~~~~~~~~~~~~~~~~~~~~~\vdots \\
&\mathbb{E}\{V(\tau_1^{-}, \delta_{\tau_1^{-}}, \dot{\delta}_{\tau_1^{-}}, r(\tau_1^{-}))\}-\mathbb{E}\{V(0, \delta_0, \dot{\delta}_0, r(0))\} \\
\leq&\mathbb{E}\left\{\int_0^{t_1^{-}}( \sum_{m \in \mathbb{S}_c} (-\alpha_m \chi_m(s)) + \sum_{n \in \mathbb{S}_d} \beta_n \chi_n(s) ) \|\xi(s)\|^2 \text{d} s\right\}.
\end{aligned}
\end{equation*}
\end{small}
Due to the fact that $V_2(\tau, \delta _{\tau}, \dot{\delta}_ {\tau})\geq0$ and $\int_{\tau-\varsigma(\tau)}^\tau \delta^{\rm T}(s)\left(\mathbf{I}_{Nn}\otimes \mathds{U}\right) \delta(s) \text{d} s=0$ when $\tau=\frac{qh}{\varepsilon}$, we have $$\lim _{ \tau \rightarrow (\frac{q h}{\varepsilon})^{-}}V_2(\tau, \delta _{\tau}, \dot{\delta}_ {\tau}) \geq V_2(\tau, \delta _{\tau}, \dot{\delta}_ {\tau})|_{\tau=\frac{q h}{\varepsilon}}.$$ It also means that
\begin{equation*}
\mathbb{E}\{ V(\tau_q^-, \delta_{\tau_q^-},\dot{\delta}_{\tau_q^-}, r(\tau_q^-))\} \geq \mathbb{E}\{V(\tau_q, \delta_{\tau_q}, \dot{\delta}_{ \tau_q}, r(\tau_q))\}.
\end{equation*}
Therefore, based on the above inequalities, one can obtain
\begin{equation*}
\begin{aligned}
&\sum_{q=0}^{\infty} \mathbb{E}\left\{\int_{\tau_ q}^{\tau_{q+1}^-}( \sum_{m \in \mathbb{S}_c} \alpha_m \chi_m(s) - \sum_{n \in \mathbb{S}_d} \beta_n \chi_n(s) ) \|\xi(s)\|^2\text{d}s\right\}\\
\leq&\mathbb{E}\left\{ V\left(0, \delta_0, \dot{\delta}_0, r(0)\right) \right\}.
\end{aligned}
\end{equation*}
Over an infinite horizon, the time fraction spent in each mode converges to its stationary probability. Thus, based on the following condition\begin{equation}\label{rho_cond}\rho := \sum_{m \in \mathbb{S}_c} \pi_m \alpha_m - \sum_{n \in \mathbb{S}_d} \pi_n \beta_n > 0,\end{equation}the overall expected drift of the system is strictly negative. This allows us to bound the infinite integral as$$\lim _{\gamma \rightarrow \infty} \mathbb{E}\left\{\int_0^\gamma\|\xi(s) \|^2 \text{d} s\right\} \leq \frac{1}{\rho}\mathbb{E}\left\{ V\left(0, \delta_0, \dot{\delta}_0, r(0)\right) \right\} <\infty,$$which also implies that $\lim _{\gamma \rightarrow \infty} \mathbb{E}\|\delta(\gamma)\|^2=0$. Thus, the state of the boundary-layer system \eqref{bianjie2} with event-triggered mechanism \eqref{chufa} achieves consensus in the mean-square sense. 
\section*{References}
\bibliography{markov}
\bibliographystyle{IEEEtran}
\end{document}